		\newtheorem{Theorem}{Theorem}[section]
		\newtheorem{Definition}[Theorem]{Definition}
		\newtheorem{Remark}[Theorem]{Remark}
		\newtheorem*{Remark*}{Remark}
		\newtheorem{Lemma}[Theorem]{Lemma}
		\newtheorem{Assumption}[Theorem]{Assumption}
		\newcommand{\tr}[1]{\text{tr} #1 }
		\newcommand{\R}{\mathbb{R}}
		\newcommand{\norm}[2]{\left\lvert \left\lvert #1 \right\rvert \right\rvert_{#2}}
		\newcommand{\abs}[1]{\left\lvert #1 \right\rvert}
		\newcommand{\dx}[1]{\ \text{d}#1}
		\newcommand{\embedding}{\hookrightarrow}
		\DeclareOldFontCommand{\rm}{\normalfont\rmfamily}{\mathrm}
		\DeclareOldFontCommand{\sf}{\normalfont\sffamily}{\mathsf}
		\DeclareOldFontCommand{\tt}{\normalfont\ttfamily}{\mathtt}
		\DeclareOldFontCommand{\bf}{\normalfont\bfseries}{\mathbf}
		\DeclareOldFontCommand{\it}{\normalfont\itshape}{\mathit}
		\DeclareOldFontCommand{\sl}{\normalfont\slshape}{\@nomath\sl}
		\DeclareOldFontCommand{\sc}{\normalfont\scshape}{\@nomath\sc}
		\numberwithin{equation}{section}
		\newcommand{\subin}{^\text{in}}
		\newcommand{\subwall}{^\text{wall}}
		\newcommand{\subout}{^\text{out}}
		\newcommand{\subdomain}{^\text{sub}}
		\newcommand{\subdes}{^\text{des}}
		\newcommand{\viscosity}{\mu}
		\newcommand{\density}{\rho}
		\newcommand{\hcapacity}{C_p}
		\newcommand{\conductivity}{\kappa}
		\newcommand{\htc}{\alpha}
		\newcommand{\normal}{n}
		\newcommand{\velocity}{u}
		\newcommand{\pressure}{p}
		\newcommand{\temperature}{T}
		\newcommand{\solution}{U}
		\newcommand{\strongvelo}{\velocity}
		\newcommand{\strongpres}{\pressure}
		\newcommand{\strongtemp}{\temperature}
		\newcommand{\testvelo}{\hat{v}}
		\newcommand{\testpres}{\hat{q}}
		\newcommand{\testtemp}{\hat{S}}
		\newcommand{\testsolution}{\hat{V}}
		\newcommand{\advelo}{v}
		\newcommand{\adpres}{q}
		\newcommand{\adtemp}{S}
		\newcommand{\adsolution}{P}
		\newcommand{\testadvelo}{\hat{u}}
		\newcommand{\testadpres}{\hat{p}}
		\newcommand{\testadtemp}{\hat{T}}
		\newcommand{\testadsolution}{\hat{U}}
		\newcommand{\fspace}{V}
		\newcommand{\pspace}{P}
		\newcommand{\tspace}{W}
		\newcommand{\statespace}{\mathcal{U}}
		\newcommand{\laplace}{\Delta}
		\newcommand{\grad}{\nabla}
		\newcommand{\divergence}[1]{\text{div}\left(#1\right)}
		\newcommand{\integral}[1]{\int_{#1}}
		\newcommand{\tdiv}[1]{\text{div}_\Gamma \left( #1 \right)}
		\newcommand{\lagrangian}{\mathcal{L}}
		\newcommand{\shapelagrangian}{\mathcal{G}}
		\newcommand{\admissiblegeom}{\mathcal{A}}
		\newcommand{\admissibledefo}{\Xi}
		\newcommand{\costfunction}{J}
		\newcommand{\reducedcostfunction}{\hat{J}}
		\newcommand{\vectorfield}{\mathcal{V}}
		\newcommand{\weightreg}{\lambda_3}
		\newcommand{\weightvelo}{\lambda_2}
		\newcommand{\weighttemp}{\lambda_1}
		\newcommand{\flow}{\Phi_t}
		\newcommand{\fluxdes}{Q\subdes}
		\newcommand{\flux}{Q}
		\newcommand{\transposed}{^\top}
\begin{document}

{\footnotesize
\begin{center}
	This is a preprint. The final version of this article can be found at \url{https://doi.org/10.1016/j.jmaa.2020.124476}.
\end{center}
}

\title{Shape Sensitivity Analysis for a Microchannel Cooling System}
\author{Sebastian Blauth$^{*,1,2}$}
\address{$^*$ Corresponding Author}
\address{$^1$ Fraunhofer ITWM, Kaiserslautern, Germany}
\email{\href{mailto:sebastian.blauth@itwm.fraunhofer.de}{sebastian.blauth@itwm.fraunhofer.de}}
\author{Christian Leith\"auser$^1$}
\email{\href{mailto:christian.leithaeuser@itwm.fraunhofer.de}{christian.leithaeuser@itwm.fraunhofer.de}}
\author{Ren\'e Pinnau$^2$}
\email{\href{pinnau@mathematik.uni-kl.de}{pinnau@mathematik.uni-kl.de}}
\address{$^2$ TU Kaiserslautern, Kaiserslautern, Germany}

\begin{abstract}
	We analyze the theoretical framework of a shape optimization problem for a microchannel cooling system. To this end, a cost functional based on the tracking of absorbed energy by the cooler as well as some desired flow on a subdomain of the cooling system is introduced. The flow and temperature of the coolant are modeled by a Stokes system coupled to a convection diffusion equation. We prove the well-posedness of this model on a domain transformed by the speed method. Further, we rigorously prove that the cost functional of our optimization problem is shape differentiable and calculate its shape derivative by means of a recent material derivative free adjoint approach.
	
	\medskip
	\noindent \textsc{Keywords. } Shape Optimization, Adjoint Approach, Optimization with PDE constraints, Microchannels
	
	\medskip
	\noindent \textsc{AMS subject classifications. } 49Q10, 49Q12, 35Q35, 76D55
\end{abstract}

\maketitle

\vspace{-0.5cm}
\section{Introduction}
\label{sec:introduction}

Since microchannel cooling systems have excellent thermal properties, they are widely used to cool devices that emit a lot of heat over a small area, e.g., electronic equipment or chemical microreactors (see, e.g., \cite{tuckerman, review_channels, khan2011review, naqiuddin2018overview} and the references therein). In particular, such cooling systems have high heat transfer coefficients due to the large specific surface area of the microchannels, and their low system inertia allows for a safe and reliable operation. 

In our earlier work \cite{blauth}, we introduced a model hierarchy for a shape optimization problem for such a microchannel cooling system and used this to investigate the optimization problem numerically. Here, we are going to analyze rigorously the theoretical aspects of the shape optimization problem in \cite{blauth}.

In the literature the shape design of microchannel geometries received a lot of attention, e.g., in \cite{juniper, sigmund, meyer, husain, afzal, kubo_topology, chen2016novel, chentopo, pan2008optimal, andreasen}. There, the shapes of the geometries were parametrized to reduce the optimization problems to finite dimensional ones. This approach obviously limits the shapes under consideration to the ones representable by the chosen parametrization. A more general, infinite dimensional approach is given by shape sensitivity analysis based on shape calculus. This technique investigates the sensitivity of a shape functional, i.e., a real-valued function defined on a subset of the power set of $\mathbb{R}^d$, with respect to infinitesimal variations of the domain (see, e.g., \cite{sokolowski_zolesio, delfour_zolesio}). In recent years, this has become a well-established approach used for various applications, e.g., for the shape optimization of electromagnetic devices \cite{gangl_shape, gangl2, semmler, leugering}, aircrafts \cite{gauger, kroll, hazra, schmidt_navier}, spin packs \cite{hohmann, leith, leith2, leith3}, automobiles \cite{itakura, magoulas, giannakoglou}, or pipes \cite{henrot, hohmann2019gradientbased}, and is also used to solve inverse problems, e.g., in tomography \cite{hintermueller, laurain, beretta}. To the best of our knowledge, the shape optimization of a microchannel geometry based on shape calculus has not been analytically investigated so far. However, note that the shape sensitivity analysis for similar state equations has already been considered in the literature, e.g., in \cite{lindemann, hohmann, hou, feppon, badra}.

To model the cooling system, we use a Stokes system coupled to a convection-diffusion equation, which describe the coolant's flow and temperature, respectively. We present a shape optimization problem for the cooler given by the minimization of a cost functional consisting of two tracking-type terms and a perimeter regularization. The former are based on the absorption of energy by the cooling system and on a desired velocity on a subdomain the cooler respectively. 

The main objective of this paper is to rigorously prove that the cost functional of the optimization problem mentioned above is shape differentiable and to calculate its shape derivative. For this, we use the recent material derivative free adjoint approach of \cite{sturm}. In particular, we transform a reference domain by the speed method, analyze the state system on the resulting transformed domain, and show its well-posedness for small transformations. Using this, we verify the assumptions of the modified Correa-Seeger theorem of \cite{sturm}, which we apply to our problem to calculate the shape derivative of our cost functional.

This paper is structured as follows. In Section~\ref{sec:problem_formulation} we present our mathematical model of the cooling system and the corresponding shape optimization problem, which we originally introduced in \cite{blauth}. We recall some basic results from shape calculus and state the modified version of the theorem of Correa and Seeger from \cite{sturm} in Section~\ref{sec:shape_calculus}. The analysis of the state system on a domain transformed by the speed method is carried out in Section~\ref{sec:analysis_state_system}. This is used in Section~\ref{sec:shape_differentiability}, where we transfer our setting to the one of the material derivative free adjoint approach of \cite{sturm} and prove the shape differentiability of our cost functional.

\section{Problem Formulation}
\label{sec:problem_formulation}

Before we introduce our mathematical model for the cooling system, we introduce some notations used throughout this paper. Afterwards, we formulate a PDE constrained shape optimization problem used to design the cooler's geometry. 


\subsection{Basic Notations}

Let $d = 2,3$ be the spatial dimension. We denote by $\Omega \subset \R^d$ an open, bounded, and connected Lipschitz domain that represents the geometry of the cooling system. Its boundary $\Gamma = \partial \Omega$ is divided into $\Gamma\subin$, $\Gamma\subout$, and $\Gamma\subwall$, corresponding to in- and outlet as well as wall boundary of the cooler. We assume that all three parts have a positive Lebesgue measure. Moreover, we denote by $\Omega\subdomain$ a subdomain of $\Omega$ consisting of the disjoint union of open and connected subsets of $\Omega$. A two-dimensional slice of such a microchannel geometry is shown in Figure~\ref{figure:full_geometry}.
\begin{figure}[b]
	\centering
	\begin{subfigure}[b]{0.49\textwidth}
		\includegraphics[width=\textwidth, trim= 8cm 0cm 0cm 0cm, clip]{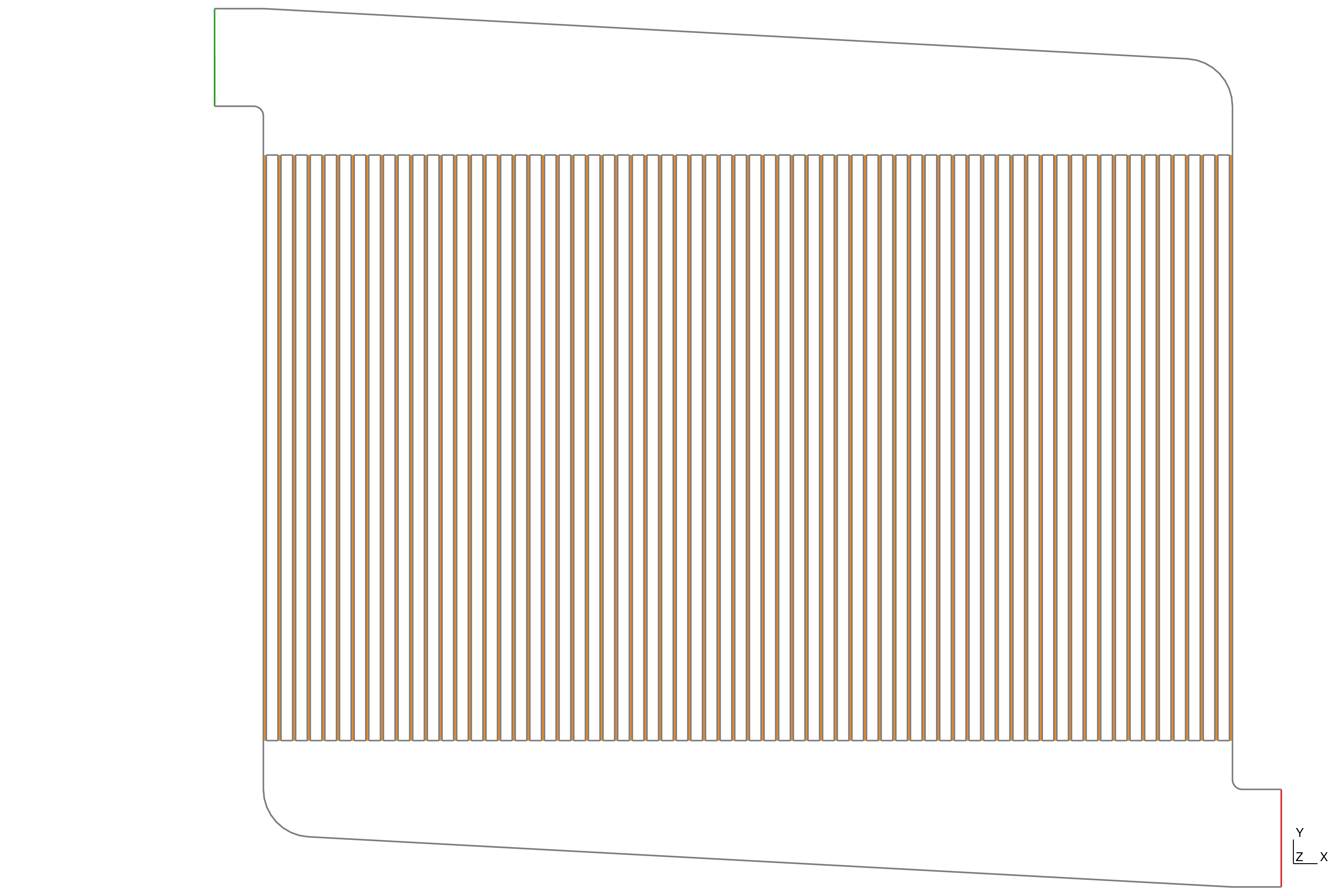}
		\caption*{The complete geometry.}
	\end{subfigure}
	\hfil
	\begin{subfigure}[b]{0.49\textwidth}
		\frame{\includegraphics[width=\textwidth, trim= 1cm 0cm 0.5cm 0cm, clip]{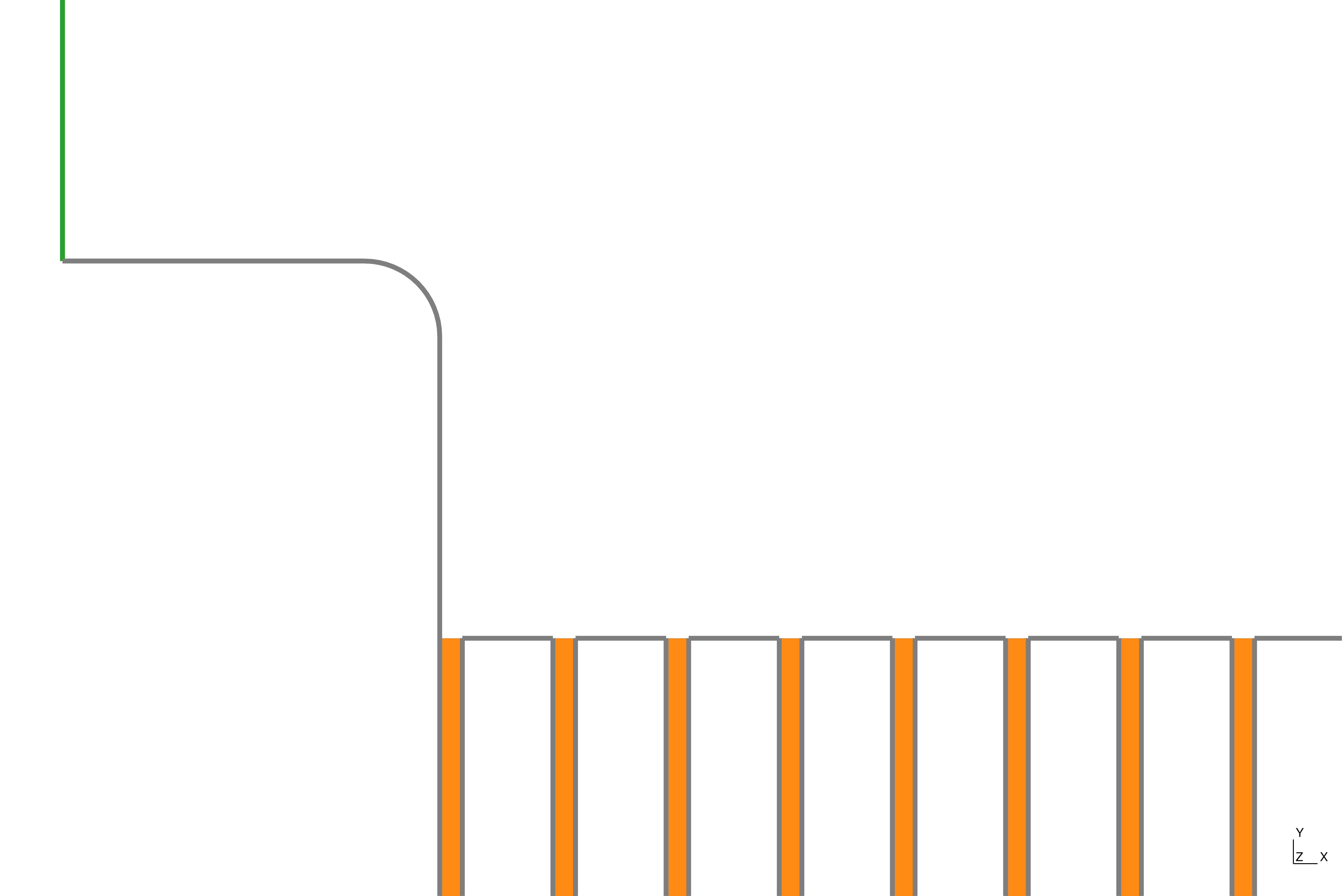}}
		\caption*{Zoom, showing inlet and some microchannels.}
	\end{subfigure}
	\caption{Two-dimensional slice of the domain $\Omega$, as used in \cite{blauth}, with inlet (green, top left), wall boundary (gray), and outlet (red, bottom right). The subdomain $\Omega\subdomain$ (orange) corresponds to the region of the microchannels.}
	\label{figure:full_geometry}
\end{figure}

For $k\in \mathbb{N}_{>0}$ the space $C^k_0(\R^d;\R^d)$ is defined as
\begin{equation*}
	C^k_0(\R^d;\R^d) := \Set{f \in C^k(\R^d;\R^d) | \text{supp}(f) \text{ is compact}},
\end{equation*}
i.e., the space of all $k$-times continuously differentiable mappings from $\R^d$ to $\R^d$ having compact support, endowed with the canonical norm (see, e.g., \cite{adams, alt}). Further, we introduce the Sobolev spaces
\begin{equation*}
	\begin{aligned}
		\fspace(\Omega) &:= \set{\velocity \in H^1(\Omega)^d | \velocity=0 \text{ on } \Gamma\subin \cup \Gamma\subwall}, \qquad \pspace(\Omega) := L^2(\Omega), \\
		\tspace(\Omega) &:= \set{\temperature \in H^1(\Omega) | \temperature=0 \text{ on } \Gamma\subin}, \qquad \statespace(\Omega) := \fspace(\Omega) \times \pspace(\Omega) \times \tspace(\Omega),
	\end{aligned}
\end{equation*}
equipped with the usual norms (see, e.g., \cite{alt, ern_guermond}). For generic elements $\solution, \adsolution \in \statespace(\Omega)$ we write $\solution = (\velocity, \pressure, \temperature)$ and $\adsolution = (\advelo, \adpres, \adtemp)$. For a Fr\'echet differentiable function $f \colon E \to F; x\mapsto f(x)$, where $E$ and $F$ are Banach spaces, we write $\partial_x f(x^*)[y]$ for the application of the Fr\'echet derivative of $f$ at $x^* \in E$ to some $y\in E$.
We call a real-valued function defined on a subset of $\set{\Omega | \Omega \subset \R^d}$ a shape functional. Throughout this paper, we denote by $C$ a generic positive constant, possibly taking different values in every place it appears, and write $C = C(\Omega)$ if the constant depends on the domain $\Omega$. We write $\lim_{t\searrow 0}$ for the one sided limit where $t$ approaches $0$ from the right and define $f'(0^+) = \lim_{t\searrow 0} \nicefrac{1}{t}\left( f(t) - f(0) \right)$.
We denote by $\text{id} \colon \R^d \to \R^d$ the identity mapping and by $I$ the identity matrix. The Jacobian of a vector-valued function $v$ is denoted by $Dv$ and its normal derivative is given by $\partial_\normal v = Dv\ \normal$, where $\normal$ is the outward unit normal on $\Gamma$. The tangential divergence of $v$ is given by $\tdiv{v} = \divergence{v} - (Dv\ n) n$ and the symmetric part of the Jacobian of $v$ is denoted by $\varepsilon(v) = \nicefrac{1}{2}\left( Dv + Dv\transposed \right)$.

\subsection{Mathematical Model}

We consider only the steady state of the cooling system and use the following Stokes system to model the behavior of the coolant
\begin{equation}
	\label{eq:stokes}
	\left\lbrace \quad
	\begin{alignedat}{2}
		-\viscosity \laplace \strongvelo + \grad \strongpres &= 0 \quad &&\text{ in } \Omega,\\
		\divergence{\strongvelo} &= 0 \quad &&\text{ in } \Omega,\\
		\strongvelo &= \velocity\subin \quad &&\text{ on } \Gamma\subin,\\
		\strongvelo &= 0 \quad &&\text{ on } \Gamma\subwall,\\
		\viscosity\ \partial_\normal \strongvelo - \strongpres \normal &= 0 \quad &&\text{ on } \Gamma\subout,
	\end{alignedat}
	\right.
\end{equation}
where $\strongvelo$ is the fluid's velocity, $\strongpres$ is its pressure, and $\viscosity$ denotes its viscosity. We have an inflow condition on $\Gamma\subin$ with inflow velocity $\velocity\subin$, which is chosen to satisfy $\velocity\subin \cdot \normal \leq 0$ a.e. on $\Gamma\subin$. Without loss of generality, we assume that $\velocity\subin \in H^1(\Omega)^d$  and $\velocity\subin = 0$ on $\Gamma\subwall$. Additionally, we use the usual no-slip boundary condition on $\Gamma\subwall$ and a do-nothing condition on $\Gamma\subout$, where the latter models the flow of the coolant out of $\Omega$ (see, e.g., \cite{john}).

To model the coolant's temperature distribution we use the following convection-diffusion equation
\begin{equation}
	\label{eq:convection-diffusion}
	\left\lbrace \quad
	\begin{alignedat}{2}
		- \grad \cdot \left( \conductivity \grad \strongtemp \right) + \density \hcapacity\ \strongvelo \cdot \grad \strongtemp &= 0 \quad &&\text{ in } \Omega,\\
		\strongtemp &= \temperature\subin \quad &&\text{ on } \Gamma\subin,\\
		\conductivity\ \partial_\normal \strongtemp + \htc \left( \strongtemp - \temperature\subwall \right) &= 0 \quad &&\text{ on } \Gamma\subwall,\\
		\conductivity\ \partial_\normal \strongtemp &= 0 \quad &&\text{ on } \Gamma\subout,
	\end{alignedat}
	\right.
\end{equation}
where $\strongtemp$ denotes the temperature of the fluid and $\strongvelo$ its velocity, obtained from \eqref{eq:stokes}. Further, $\conductivity$ is the fluid's thermal conductivity, $\density$ its density, and $\hcapacity$ denotes its heat capacity. On $\Gamma\subin$ we have a Dirichlet condition with inflow temperature $\temperature\subin$. As before, we assume  $\temperature\subin \in H^1(\Omega)$. Further, we assume that a heat source, which we do not model explicitly, is coupled to the cooling system. The wall temperature arising from this is denoted by $\temperature\subwall \in H^2(\R^d)$ and assumed to be known. Note, that the regularity of $\temperature\subwall$ is needed for the analysis of the shape optimization problem later on. The heat transfer between the heat source and the coolant is then modeled by a Robin boundary condition on $\Gamma\subwall$, where $\htc$ is the corresponding heat transfer coefficient. Finally, we have a homogeneous Neumann condition on $\Gamma\subout$ that models the behavior of the coolant's temperature at the outlet. Throughout this paper we assume that the physical parameters $\viscosity$, $\conductivity$, $\density$, $\hcapacity$, and $\htc$ are positive constants.

For the purposes of PDE constrained optimization the strong formulation of \eqref{eq:stokes} and \eqref{eq:convection-diffusion} is too strict (cf. \cite{hinze_pinnau_ulbrich, troeltzsch}), hence, we use the corresponding weak formulation introduced in the following. 
We write $\strongvelo = \velocity^0 + \velocity\subin$, $\strongpres = \pressure^0$, and $\strongtemp = \temperature^0 + \temperature\subin$ with $\velocity^0 \in \fspace(\Omega)$, $\pressure^0 \in \pspace(\Omega)$, and $\temperature^0 \in \tspace(\Omega)$, i.e., we do a so-called lifting of the inhomogeneous Dirichlet boundary conditions (see, e.g., \cite{ern_guermond, evans}), and obtain the following weak form of \eqref{eq:stokes} and \eqref{eq:convection-diffusion}
\begin{equation}
	\label{eq:weak_state_reference}
	\left\lbrace\quad
	\begin{aligned}
		&\text{Find } \solution^0 = (\velocity^0, \pressure^0, \temperature^0) \in \statespace(\Omega) \text{ such that } \\
		&\qquad \integral{\Omega} \viscosity\ D(\velocity^0 + \velocity\subin) : D\testvelo - \pressure^0\ \divergence{\testvelo} - \testpres\ \divergence{\velocity^0 + \velocity\subin} + \conductivity \grad (\temperature^0 + \temperature\subin) \cdot \grad \testtemp \dx{x} \\
		&\qquad + \integral{\Omega} \density \hcapacity \left(\velocity^0 + \velocity\subin\right) \cdot \grad (\temperature^0 + \temperature\subin)\ \testtemp \dx{x} + \integral{\Gamma\subwall} \htc \left((\temperature^0 + \temperature\subin) - \temperature\subwall\right) \testtemp \dx{s} = 0 \\
		&\text{for all } \testsolution = (\testvelo, \testpres, \testtemp) \in \statespace(\Omega).
	\end{aligned}
	\right.
\end{equation}
A generalized version of this weak formulation is analyzed in Section~\ref{sec:analysis_state_system}, which will give the well-posedness of \eqref{eq:weak_state_reference} as a special case.

\subsection{The Optimization Problem}
\label{sec:optimization_problem}

We want to improve the quality of the cooling system by modifying its shape. However, not all changes of the geometry lead to physically meaningful results. In particular, we assume that the boundaries $\Gamma\subin$ and $\Gamma\subout$ are fixed and must not be changed. This is reasonable as the cooling system is connected to other devices, such as the coolant supply, via these boundaries. The previously stated geometrical constraints are incorporated to the optimization problem by defining the following set of admissible domains
\begin{equation*}
	\admissiblegeom := \Set{\tilde{\Omega} | \tilde{\Omega} \subset \R^d \text{ is a bounded Lipschitz domain with } \tilde{\Gamma}\subin = \Gamma\subin \text{ and } \tilde{\Gamma}\subout = \Gamma\subout},
\end{equation*}
and only considering domains that are in $\admissiblegeom$.

To optimize the cooling system, we consider the following cost functional
\begin{equation*}
	\costfunction(\Omega, \solution) = \weighttemp \left( \flux(0, \temperature) - \fluxdes \right)^2 + \weightvelo \integral{\Omega\subdomain} \abs{(\velocity + \velocity\subin) - \velocity\subdes}^2 \dx{x} + \weightreg \integral{\Gamma} 1 \dx{s},
\end{equation*}
where we write $\solution = (\velocity, \pressure, \temperature)$ and $\flux(0,\temperature)$ is defined as
\begin{equation}
	\label{eq:flux_temperature}
	\flux(0, \temperature) := \integral{\Gamma\subwall} \htc \left( \temperature\subwall - \left( \temperature + \temperature\subin \right) \right) \dx{s}.
\end{equation}
We remark that the notation $\flux(0, \temperature)$ is introduced to be compatible with the calculations done later on in Section~\ref{sec:relation_to_abstract}.
Further, we assume that the weights $\lambda_i$ are non-negative, $\fluxdes \in \R$ is a constant and $\velocity\subdes \in H^1(\R^d)^d$. This yields the following shape optimization problem
\begin{equation}
	\label{eq:opt_problem}
	\min_{\Omega \in \admissiblegeom,\ \solution \in \statespace(\Omega)} \costfunction(\Omega, \solution) \quad \text{ subject to } \eqref{eq:weak_state_reference}.
\end{equation}
As in \cite{blauth}, we observe that minimizing the cost functional leads to the following. The first term ensures that $\flux(0,\temperature)$, which corresponds to the energy absorbed by the cooler, approaches a desired amount $\fluxdes$. For the physical application this models that the heat-producing device the cooling system is coupled to should be kept at an optimal working temperature, which occurs if the cooling system absorbs the desired amount of energy $\fluxdes$. 
The second term causes the flow velocity to approximate the desired velocity $\velocity\subdes$ on the subdomain $\Omega\subdomain$ by minimizing the $L^2$ distance between $\velocity + \velocity\subin$ and $\velocity\subdes$. This can, e.g., be used to achieve a uniform flow distribution through the microchannels, which helps to avoid the occurrence of hot-spots and, thus, increases the quality of the cooler (cf. \cite{blauth}).
The final term of the cost functional, a so-called perimeter regularization, penalizes the increase of surface area, which helps to keep the geometries bounded and enhances the numerical stability of the problem. Moreover, the perimeter regularization can, under suitable assumptions, be useful in proving the existence of a minimizer, as discussed in, e.g., \cite{perimeter_regularization}. However, we note that the investigation of existence of an optimal shape for problem \eqref{eq:opt_problem} is beyond the scope of this paper.

We used this cost functional in \cite{blauth} to optimize such a microchannel cooling system numerically, where we chose $\Omega\subdomain$ as the geometry of the microchannels and $\velocity\subdes$ as Poiseuille flow corresponding to a uniform flow distribution through the microchannels. The resulting optimized geometry from \cite{blauth} obtained with the weights
\begin{equation*}
	\weighttemp = \frac{1}{\left( \integral{\Gamma\subwall_0} \htc \left( \temperature\subwall - \left( \temperature + \temperature\subin \right) \right) \dx{s} - \fluxdes \right)^2}, \quad \weightvelo = \frac{1}{\integral{\Omega\subdomain_0} \abs{(\velocity + \velocity\subin) - \velocity\subdes}^2 \dx{x}}, \quad \weightreg = \frac{\num{1e-2}}{\integral{\Gamma_0} 1 \dx{s}}, 
\end{equation*}
where $\Omega_0$ is the initial domain and $\Gamma_0$ denotes its boundary, is depicted in Figure~\ref{figure:optimized}. Note, that for this optimization, the entire wall boundary $\Gamma\subwall$ was variable. However, to avoid degeneration of the geometry, the boundary $\partial\Omega\subdomain$ is only allowed to vary along the vertical direction so that the microchannels remain straight and can only change their length, as discussed in \cite{blauth}. 
\begin{figure}[b]
	\centering
	\begin{tikzpicture}
	\node at (0,0) {\includegraphics[width=0.5\textwidth]{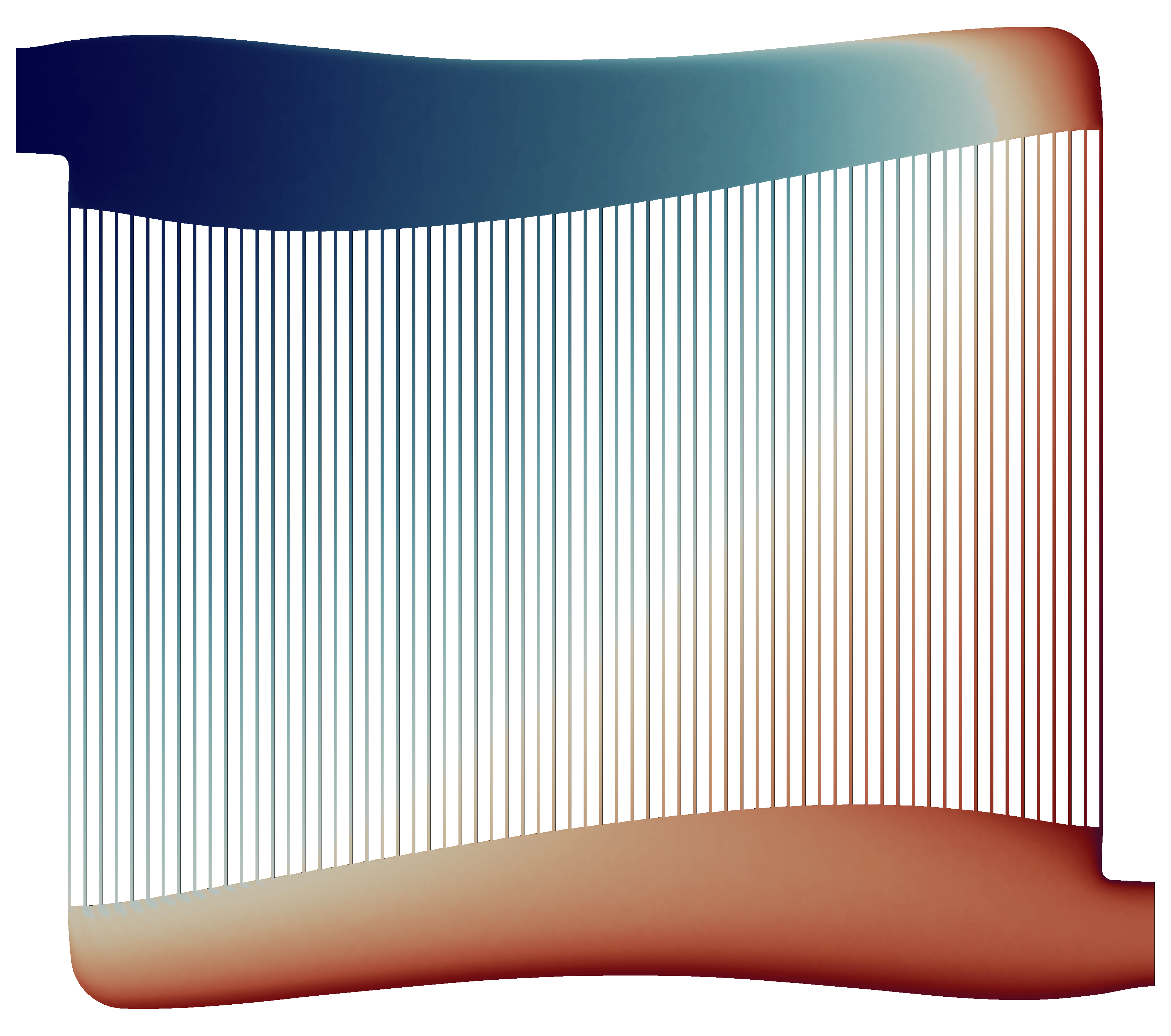}};
	\node at (5,0) {\includegraphics[width=0.1\textwidth, trim=15cm 85cm 30cm 30cm, clip]{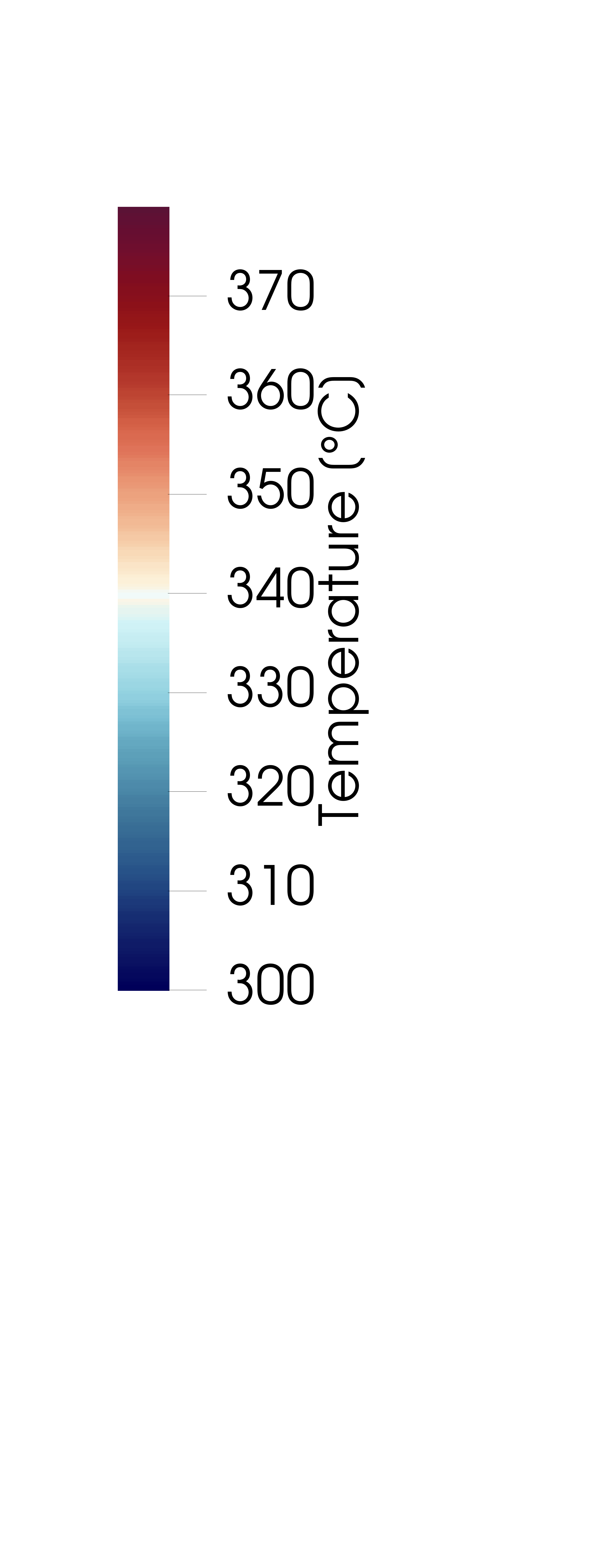}};
	\end{tikzpicture}
	\caption{Optimized geometry of the cooling system from \cite{blauth} showing the temperature of the coolant.}
	\label{figure:optimized}
\end{figure}
Finally, we remark that one could additionally consider the minimization of the system's pressure drop as additional goal for the optimization, which is often relevant for physical applications. This can be done, e.g., by adding the viscous dissipated energy $\integral{\Omega} D(\velocity + \velocity\subin) : D(\velocity + \velocity\subin) \dx{x}$ to the cost functional, which models the pressure power loss between in- and outlet, as discussed in \cite{hohmann}. We refer the reader, e.g., to \cite{hohmann, schmidt_navier} for a discussion of this term in the context of shape optimization. However, in this paper we do not further consider the pressure drop as it was of minor importance for the numerical results in \cite{blauth}.


In Section~\ref{sec:shape_differentiability} we show that the state system \eqref{eq:weak_state_reference} has a unique solution for every $\Omega \in \admissiblegeom$. Denoting by $\solution(\Omega)$ this weak solution on the domain $\Omega \in \admissiblegeom$, we introduce the reduced cost functional $\reducedcostfunction$ as
\begin{equation}
	\label{eq:reduced_cost_function}
	\reducedcostfunction(\Omega) := \costfunction(\Omega, \solution(\Omega)).
\end{equation}
It is easy to see that \eqref{eq:opt_problem} is equivalent to the following reduced shape optimization problem
\begin{equation}
	\label{eq:reduced_problem}
	\min_{\Omega \in \admissiblegeom} \reducedcostfunction(\Omega),
\end{equation}
where we have formally eliminated the PDE constraint. This problem is investigated in Section~\ref{sec:shape_differentiability}, after we give the necessary prerequisites for its analysis in Sections~\ref{sec:shape_calculus} and~\ref{sec:analysis_state_system}.

\section{Shape Calculus}
\label{sec:shape_calculus}

In this section, we discuss the theoretical foundation for analyzing shape optimization problems like \eqref{eq:reduced_problem}. We first introduce the notion of shape differentiability using the speed method and then present the material derivative free adjoint approach of \cite{sturm}, which we use in Section~\ref{sec:shape_differentiability} to calculate the shape derivative for our problem. A detailed introduction to shape sensitivity analysis can be found, e.g., in \cite{sokolowski_zolesio, delfour_zolesio}.

\subsection{Basic Results}

Throughout this section we consider a general bounded domain $\Omega \subset \R^d$, $d\in \mathbb{N}, d\geq 2$, with Lipschitz boundary $\Gamma = \partial \Omega$. The speed method considers the transformation of $\Omega$ under the flow of a vector field $\vectorfield$. In particular, let $\vectorfield \in C^k_0(\R^d; \R^d)$ for $k\geq 1$. The evolution of a point $x \in \Omega$ under the flow of $\vectorfield$ is defined as the solution of the following initial value problem
\begin{equation}
	\label{eq:ivp}
	\dot{x}(t) = \vectorfield(x(t)), \qquad x(0) = x.
\end{equation}
From classical ODE theory (see, e.g., \cite{teschl, deuflhard}) we know that \eqref{eq:ivp} has a unique solution $x(t)$ for all $t \in [0, \tau]$ if $\tau > 0$ is sufficiently small. The flow of $\vectorfield$ is then defined by
\begin{equation*}
	\flow^\vectorfield \colon \R^d \to \R^d;\quad x \mapsto \flow^\vectorfield(x) = x(t),
\end{equation*}
which is a diffeomorphism for all $t\in [0,\tau]$ (see, e.g., \cite{delfour_zolesio}). 
Throughout the rest of this paper, we drop the dependence on the vector field $\vectorfield$ and only write $\flow = \flow^\vectorfield$. Let us now introduce the shape derivative, following \cite[Chapter 9, Definition 3.4]{delfour_zolesio}.
\begin{Definition}
	\label{def:shape_derivative}
	Let $\tau > 0$ be sufficiently small, $\mathcal{S} \subset \set{\Omega | \Omega \subset \R^d}$, $J\colon \mathcal{S} \to \R$, and $\Omega \in \mathcal{S}$. Further, let $\vectorfield \in C^k_0(\R^d;\R^d)$ for $k\geq 1$, and $\flow$ be the flow associated to $\vectorfield$. Additionally, we assume that $\flow(\Omega) \in \mathcal{S}$ for all $t\in [0,\tau]$. We say that the shape functional $J$ has a Eulerian semi-derivative at $\Omega$ in direction $\vectorfield$ if the following limit exists
	\begin{equation*}
		dJ(\Omega)[\vectorfield] := \lim\limits_{t \searrow 0} \frac{J(\flow(\Omega)) - J(\Omega)}{t} = \left. \frac{d}{d t} J(\flow(\Omega)) \right\rvert_{t=0^+}.
	\end{equation*}
	
	Moreover, let $\admissibledefo$ be a topological vector subspace of $C^\infty_0(\R^d;\R^d)$. We call $J$ shape differentiable at $\Omega$ w.r.t. $\admissibledefo$ if it has an Eulerian semi-derivative at $\Omega$ in all directions $\vectorfield \in \admissibledefo$ and, additionally, the mapping
	\begin{equation*}
		\admissibledefo \to \R; \quad \vectorfield \mapsto dJ(\Omega)[\vectorfield]
	\end{equation*}
	is linear and continuous. In this case, we call $dJ(\Omega)[\vectorfield]$ the shape derivative of $J$ at $\Omega$ w.r.t. $\admissibledefo$ in direction $\vectorfield \in \admissibledefo$.
\end{Definition}
\begin{Remark*}
	The subspace $\admissibledefo$ in the previous definition is needed to incorporate the geometrical constraints of the optimization problem \eqref{eq:opt_problem}, namely that we minimize over all geometries $\Omega \in \admissiblegeom$, into the definition of the shape derivative (cf. Section~\ref{sec:analysis_state_system}).
\end{Remark*}

We repeat some basic results from shape calculus needed later on.
\begin{Lemma}
	\label{lem:shape_mappings}
	Let $\tau > 0$ be sufficiently small, $\vectorfield \in C^k_0(\R^d;\R^d)$ with $k\geq 1$, and $\flow$ be its associated flow.
		\begin{enumerate}
			\item The mapping $\xi \colon [0, \tau] \to C(\R^d; \R);\ t\mapsto \xi(t) := \det\left( D\flow \right)$ is continuously differentiable and we have
			\begin{equation*}
				\xi(0) = 1 \quad \text{ and } \quad \xi'(0^+) = \divergence{\vectorfield} \quad \text{ in } C(\R^d; \R).
			\end{equation*}

			\item The mapping $\omega \colon [0, \tau] \to C(\R^d; \R);\ t\mapsto \omega(t) := \xi(t) \abs{D\flow^{-\top} \normal}$ is continuously differentiable and it holds that
			\begin{equation*}
				\omega(0) = 1 \quad \text{ and } \quad \omega'(0^+) = \tdiv{\vectorfield} \quad \text{ in } C(\R^d; \R).
			\end{equation*}

			\item The mapping $A \colon [0, \tau] \to C(\R^d; \R^{d\times d});\ t\mapsto A(t) := \xi(t) D\flow^{-1} D\flow^{-\top}$ is continuously differentiable with
			\begin{equation*}
				A(0) = I \quad \text{ and } \quad A'(0^+) = \divergence{\vectorfield} I - 2\varepsilon(\vectorfield) \quad \text{ in } C(\R^d; \R^{d \times d}).
			\end{equation*}

			\item The mapping $B \colon [0, \tau] \to C(\R^d; \R^{d\times d});\ t\mapsto B(t) := \xi(t) D\flow^{-\top}$ is continuously differentiable and
			\begin{equation*}
				B(0) = I \quad \text{ and } \quad B'(0^+) = \divergence{\vectorfield} I - D\vectorfield\transposed \quad \text{ in } C(\R^d; \R^{d \times d}).
			\end{equation*}
		\end{enumerate}
\end{Lemma}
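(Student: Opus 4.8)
The plan is to treat all four statements as consequences of the smooth dependence of the flow $\flow$ on $t$ together with elementary matrix calculus. The starting point is that, by classical ODE theory, $t \mapsto \flow$ is $C^1$ as a map from $[0,\tau]$ into $C^k(\R^d;\R^d)$ with $\flow\big|_{t=0} = \text{id}$ and $\frac{d}{dt}\flow\big|_{t=0^+} = \vectorfield$ (differentiating \eqref{eq:ivp} in $t$ and evaluating at $t=0$). Consequently $t\mapsto D\flow$ is $C^1$ into $C^{k-1}(\R^d;\R^{d\times d})$ with $D\flow\big|_{t=0} = I$ and $\frac{d}{dt}(D\flow)\big|_{t=0^+} = D\vectorfield$. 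Every quantity $\xi, \omega, A, B$ is built from $D\flow$ by smooth operations (determinant, matrix inverse, transpose, products, and — for $\omega$ — the Euclidean norm, which is smooth away from the origin, and $D\flow^{-\top}\normal$ stays near $\normal \neq 0$ for small $t$), so each is $C^1$ on a possibly smaller interval $[0,\tau]$; the values at $t=0$ are immediate from $D\flow\big|_{t=0}=I$.

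For the derivatives at $0^+$, I would use the chain rule together with two standard identities: $\frac{d}{dt}\det(M(t)) = \det(M(t))\,\tr\big(M(t)^{-1}\dot M(t)\big)$ and $\frac{d}{dt}M(t)^{-1} = -M(t)^{-1}\dot M(t)\, M(t)^{-1}$. For (1): with $M = D\flow$, at $t=0$ this gives $\xi'(0^+) = \tr(I^{-1}D\vectorfield) = \tr(D\vectorfield) = \divergence{\vectorfield}$. For (4): $B = \xi\, D\flow^{-\top}$, so by the product rule $B'(0^+) = \xi'(0^+)\,I + \big(\frac{d}{dt}D\flow^{-\top}\big)\big|_{0^+} = \divergence{\vectorfield}\,I - D\vectorfield\transposed$, using the inverse-derivative identity transposed. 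For (3): $A = \xi\, D\flow^{-1}D\flow^{-\top}$; differentiating the product of three factors and evaluating at $t=0$ (where the last two factors are $I$) yields $A'(0^+) = \divergence{\vectorfield}\,I - D\vectorfield - D\vectorfield\transposed = \divergence{\vectorfield}\,I - 2\varepsilon(\vectorfield)$. For (2): write $\omega = \xi\,\abs{D\flow^{-\top}\normal}$ and set $w(t) = D\flow^{-\top}\normal$, so $w(0)=\normal$, $w'(0^+) = -D\vectorfield\transposed\normal$; then $\frac{d}{dt}\abs{w(t)} = \frac{w(t)\cdot w'(t)}{\abs{w(t)}}$, which at $t=0$ equals $\normal\cdot(-D\vectorfield\transposed\normal) = -(D\vectorfield\,\normal)\cdot\normal$. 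Combining with $\xi'(0^+)=\divergence{\vectorfield}$ gives $\omega'(0^+) = \divergence{\vectorfield} - (D\vectorfield\,n)n = \tdiv{\vectorfield}$, matching the definition of the tangential divergence stated in the notation section.

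The only genuine subtlety — rather than a routine calculation — is making sure all of this holds in the stated function-space topologies, i.e.\ that the limits defining $\xi'(0^+)$ etc.\ converge in $C(\R^d;\cdot)$ (uniformly on $\R^d$) and not merely pointwise. This is where the compact support of $\vectorfield$ enters: outside a fixed compact set $\flow = \text{id}$ for all $t$, so all the maps above are constant in $t$ there, and on the compact set the $C^1$-in-$t$ regularity of $t\mapsto D\flow$ in the $C^{k-1}$-norm (hence in the sup-norm) transfers through the smooth algebraic operations by the mean value theorem / uniform continuity of the relevant smooth maps on the relevant compact parameter ranges. I would state this once at the outset and then carry out (1)–(4) as above, noting that $k\geq 1$ suffices since only first derivatives of $\vectorfield$ appear. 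I expect the bookkeeping for (2), with the normal $\normal$ and the norm, to be the most delicate point to write cleanly.
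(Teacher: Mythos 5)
Your proposal is correct, and it is essentially the standard argument: the paper itself gives no computation but simply cites \cite[Chapters~2.15 and~2.18]{sokolowski_zolesio}, where exactly the derivation you describe (smooth $t$-dependence of $\flow$, Jacobi's formula for the determinant, the derivative of the matrix inverse, and the product rule, with compact support of $\vectorfield$ guaranteeing convergence in the sup-norm) is carried out. Your computations of $\xi'(0^+)$, $\omega'(0^+)$, $A'(0^+)$, and $B'(0^+)$ all check out against the stated formulas, so you have in effect supplied the self-contained proof that the paper delegates to the reference.
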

\begin{proof}
	The proof follows from results given in \cite[Chapters~2.15 and~2.18]{sokolowski_zolesio}.
\end{proof}

\begin{Lemma}
	\label{lem:shape_composition}
	Let $\tau > 0$ be sufficiently small, $t\in [0,\tau]$, $\vectorfield \in C^k_0(\R^d;\R^d)$ with $k\geq 1$, and $\flow$ be its associated flow. Further, we write $\Omega_t = \flow(\Omega)$ as well as $\Gamma_t = \flow(\Gamma)$. Then, we have:
	\begin{enumerate}
		\item Let $f\in L^1(\Omega)$ and $g\in L^1(\Gamma)$. Then, $f\circ \flow^{-1} \in L^1(\Omega_t)$, $g\circ \flow^{-1} \in L^1(\Gamma_t)$, and the following transformation formulas are valid
		\begin{equation*}
			\integral{\Omega_t} f \circ \flow^{-1} \dx{x} = \integral{\Omega} f\ \xi(t) \dx{x} \quad \text{ as well as } \quad \integral{\Gamma_t} g \circ \flow^{-1} \dx{s} = \integral{\Gamma} g\ \omega(t) \dx{s}.
		\end{equation*}

		\item Let $n \in \mathbb{N}_{>0}$, $1\leq p \leq \infty$, and $f \in W^{1,p}(\Omega)^n$. Then $f \circ \flow^{-1} \in W^{1,p}(\Omega_t)^n$ and we have the chain rule
		\begin{equation}
			\label{eq:chain_rule}
			D(f \circ \flow^{-1}) = (Df\ D\flow^{-1}) \circ \flow^{-1} \quad \text{ in } L^p(\Omega)^{n\times d}.
		\end{equation}
		In particular, for $n=1$ we get that $\grad(f \circ \flow^{-1}) = (D\flow^{-\top} \grad f ) \circ \flow^{-1}$.
		Moreover, for the divergence operator we can use \eqref{eq:chain_rule} and get for $v \in W^{1,1}(\Omega)^d$ that
		\begin{equation}
			\label{eq:transport_divergence}
			\divergence{v \circ \flow^{-1}} = \tr\left( D (v\circ \flow^{-1}) \right) = \tr\left( Dv\ D\flow^{-1} \right)\circ \flow^{-1} \quad \text{ in } L^1(\Omega).
		\end{equation}
		\label{item:chain_rule}

		\item For $f\in H^1(\R^d)$ it holds that
		\begin{equation*}
			\lim\limits_{t \searrow 0} \norm{f\circ \flow - f}{H^1(\R^d)} = 0 \quad \text{ and } \quad \lim\limits_{t \searrow 0} \norm{f\circ \flow^{-1} - f}{H^1(\R^d)} = 0.
		\end{equation*}

		\item Let $n\in \mathbb{N}_{>0}$, $1\leq p < \infty$, and $f\in W^{1,p}(\R^d)^n$. Then, the mapping $[0,\tau] \to L^p(\R^d)^n;\ t\mapsto f \circ \flow$ is differentiable and it holds that
		\begin{equation*}
			\left( f \circ \flow \right)'(0^+) = Df\ \vectorfield \quad \text{ in } L^p(\R^d)^n.
		\end{equation*}
		In particular, for $n=1$ we obtain $\left( f \circ \flow \right)'(0^+) = \grad f \cdot \vectorfield$ in $L^p(\R^d)$.
	\end{enumerate}
\end{Lemma}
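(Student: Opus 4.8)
The plan is to read the four assertions as a chain of by now standard facts about the change of variables and about the pull-back operator $g \mapsto g \circ \flow^{\pm 1}$, reducing the real work to a single quantitative estimate. I would begin with two reductions. Since $\vectorfield$ has compact support, the constant trajectory solves \eqref{eq:ivp} at every point outside $\supp(\vectorfield)$, so $\flow = \text{id}$ on $\R^d \setminus \supp(\vectorfield)$; hence $g \circ \flow^{\pm 1} - g$ is supported in the fixed compact set $\supp(\vectorfield)$ for all $t \in [0,\tau]$. Furthermore, by Lemma~\ref{lem:shape_mappings} the maps $t \mapsto \xi(t)$, $t \mapsto D\flow$, $t \mapsto D\flow^{-1}$ are continuous into $C(\R^d;\cdot)$ with values $1, I, I$ at $t = 0$, so after shrinking $\tau$ I may assume $\xi(t), \xi(t)^{-1}, \norm{D\flow}{C^0}, \norm{D\flow^{-1}}{C^0} \leq C$ uniformly in $t$ and $\xi(t) \geq \nicefrac{1}{2}$. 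With this, part~(1) is the classical change of variables theorem for $x = \flow(y)$ (the Jacobian being $\det D\flow = \xi(t) > 0$) together with the analogous surface formula, whose Jacobian $\xi(t)\abs{D\flow^{-\top}\normal} = \omega(t)$ one obtains by localizing $\Gamma$ as a Lipschitz graph, or simply cites from \cite[Chapters~2.15,~2.18]{sokolowski_zolesio}. For part~(2), the chain rule \eqref{eq:chain_rule} is elementary for $f \in C^1$; for $f \in W^{1,p}(\Omega)^n$, $p < \infty$, I would approximate $f$ by $C^1$ maps in $W^{1,p}$, apply the $C^1$ case, and pass to the limit using part~(1) and the bound on $D\flow^{-1}$; for $p = \infty$ one uses that $\flow^{-1}$ is bi-Lipschitz, hence preserves $W^{1,\infty}$ with the chain rule holding a.e.\ on lines. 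The formula for $\grad(f \circ \flow^{-1})$ and \eqref{eq:transport_divergence} then follow by transposing, respectively taking the pointwise trace, in \eqref{eq:chain_rule}.

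The technical heart of the remaining two parts is the auxiliary estimate
\begin{equation*}
	\norm{h \circ \flow - h}{L^p(\R^d)^n} \leq C\, t\, \norm{\vectorfield}{C^0}\, \norm{Dh}{L^p(\R^d)^{n \times d}}, \qquad h \in W^{1,p}(\R^d)^n,\ 1 \leq p < \infty,\ t \in [0,\tau],
\end{equation*}
which I would prove by writing $h(\flow(x)) - h(x) = \int_0^t Dh(\Phi_s(x))\,\vectorfield(\Phi_s(x))\dx{s}$ for a.e.\ $x$ (justified by absolute continuity of $h$ on the flow lines, a consequence of \eqref{eq:chain_rule}), applying Jensen's inequality in $s$, and changing variables $y = \Phi_s(x)$ with the uniform bounds and part~(1). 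Part~(3) then follows: the case $p = 2$, $h = f$ gives $\norm{f \circ \flow - f}{L^2} \to 0$, and for the gradient I would use $\grad(f \circ \flow) = D\flow^\top(\grad f \circ \flow)$ from \eqref{eq:chain_rule} and split
\begin{equation*}
	\grad(f \circ \flow) - \grad f = (D\flow^\top - I)(\grad f \circ \flow) + (\grad f \circ \flow - \grad f),
\end{equation*}
where the first term tends to $0$ since $\norm{D\flow - I}{C^0} \to 0$ while $\norm{\grad f \circ \flow}{L^2}$ stays bounded, and the second tends to $0$ by the $L^2$-continuity of $g \mapsto g \circ \flow$ for arbitrary $g \in L^2(\R^d)$ — a standard $\varepsilon/3$ argument using density of $C^\infty_0(\R^d)$, the uniform boundedness in $t$ of the composition operators (part~(1)), and the uniform convergence $\varphi \circ \flow \to \varphi$ on a fixed compact for $\varphi \in C^\infty_0(\R^d)$. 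The statements for $\flow^{-1}$ reduce to those for $\flow$ by a change of variables (e.g.\ $\norm{f \circ \flow^{-1} - f}{L^2} \leq C \norm{f - f \circ \flow}{L^2}$ after substituting $x = \flow(y)$).

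For part~(4), set $R(t) := \nicefrac{1}{t}(f \circ \flow - f) - Df\,\vectorfield \in L^p(\R^d)^n$ and, for $\varphi \in C^1_0(\R^d)^n$, $R_\varphi(t) := \nicefrac{1}{t}(\varphi \circ \flow - \varphi) - D\varphi\,\vectorfield$. Since $\nicefrac{1}{t}(\varphi \circ \flow - \varphi) = \nicefrac{1}{t}\int_0^t D\varphi(\Phi_s(\cdot))\,\vectorfield(\Phi_s(\cdot))\dx{s} \to D\varphi\,\vectorfield$ uniformly as $t \searrow 0$, with all functions supported in a fixed compact, we get $R_\varphi(t) \to 0$ in $L^p$. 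For general $f$, pick $\varphi_k \to f$ in $W^{1,p}$ with $\varphi_k \in C^1_0$ and decompose
\begin{equation*}
	R(t) = R_{\varphi_k}(t) + \tfrac{1}{t}\big((f - \varphi_k) \circ \flow - (f - \varphi_k)\big) + D(\varphi_k - f)\,\vectorfield,
\end{equation*}
where the last two summands are bounded in $L^p$ by $C(\norm{\vectorfield}{C^0} + 1)\norm{f - \varphi_k}{W^{1,p}}$ uniformly in $t$, by the auxiliary estimate; the usual $\varepsilon/3$ argument (choose $k$ first, then $t$) yields $R(t) \to 0$, and $n = 1$ is immediate from $Df\,\vectorfield = \grad f \cdot \vectorfield$. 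The only genuinely substantive step is the auxiliary $L^p$-estimate: one has to justify the fundamental-theorem-of-calculus representation of $h \circ \flow - h$ along the flow for merely weakly differentiable $h$ and then change variables so that exactly one power of $t$ is produced; everything else is either a direct appeal to the classical change of variables and the Sobolev chain rule (cf.\ \cite{sokolowski_zolesio, delfour_zolesio}) or a routine density/$\varepsilon/3$ argument once this estimate is available.
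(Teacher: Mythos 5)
Your proposal is correct, but it is doing genuinely more work than the paper does: the paper's ``proof'' of this lemma is purely a list of citations (the transformation formulas to \cite{forster_3, peichl, sokolowski_zolesio}, the Sobolev chain rule to \cite{alt}, part~(3) to \cite[Lemma~2.1, p.~527]{delfour_zolesio}, and part~(4) to \cite[Chapter~2.25]{sokolowski_zolesio}), whereas you supply self-contained arguments. Those arguments are the standard ones underlying the cited results, and they are sound: the reduction to the fixed compact set $\supp(\vectorfield)$, the uniform-in-$t$ bounds on $\xi(t)$ and $D\flow^{\pm 1}$ from Lemma~\ref{lem:shape_mappings}, the density proof of the chain rule, and above all the auxiliary estimate $\norm{h\circ\flow - h}{L^p} \leq C\,t\,\norm{\vectorfield}{C^0}\norm{Dh}{L^p}$, which you correctly identify as the single substantive ingredient and deploy exactly as in the literature (once to get strong continuity in part~(3), once to get a uniform-in-$t$ bound on the difference quotients in part~(4) so that convergence on the dense subspace $C^1_0$ suffices). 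Two minor points: the fundamental-theorem representation along flow lines is cleanest if you first establish the auxiliary estimate for $h \in C^1_0$ and then extend by density, rather than invoking absolute continuity of a general $W^{1,p}$ function on flow lines (which leans on \eqref{eq:chain_rule} in a slightly circular-looking way, though it can be made rigorous); and part~(4) asserts differentiability on all of $[0,\tau]$, while you only prove it at $t=0^+$ --- the general case follows from the group property $\Phi_{t+s} = \Phi_s \circ \Phi_t$ and the fact that $f\circ\flow \in W^{1,p}$, and is worth a sentence. Neither issue affects what the paper actually uses, which is only the derivative at $0^+$.
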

\begin{proof}
	The proof follows from the following considerations:
	\begin{enumerate}
		\item The transformation formula can be found in, e.g., \cite{forster_3, peichl, sokolowski_zolesio}
		
		\item This is proved, e.g., in \cite{alt}.
		
		\item The third result follows easily from \ref{item:chain_rule} and \cite[Lemma~2.1, p. 527]{delfour_zolesio} and for \eqref{eq:transport_divergence} we refer to \cite{hohmann}.
		
		\item This follows from \cite[Chapter~2.25]{sokolowski_zolesio}. \qedhere
	\end{enumerate}
\end{proof}

\subsection{A Material Derivative Free Adjoint Approach}
\label{sec:sturm}

Now, we introduce the material derivative free adjoint approach of \cite{sturm} which we apply in Section~\ref{sec:shape_differentiability} to calculate the shape derivative of \eqref{eq:reduced_cost_function}. This approach modifies the classical theorem of Correa and Seeger (see, e.g., \cite{delfour_zolesio}) to obtain the differentiability of a Lagrangian with respect to a saddle point. We present this modified version of the theorem, which is due to \cite{sturm} and tailored to nonlinear shape optimization problems, in the following. Note, that there are several other approaches to prove the shape differentiability and to calculate the shape derivative, an overview of which can be found, e.g., in \cite{sturm_nonlinear}.

Let $E, F$ be Banach spaces and $\tau > 0$. We consider a mapping
\begin{equation*}
G \colon [0, \tau] \times E \times F \to \R; \quad \left(t, \solution, \adsolution\right) \mapsto G\left(t, \solution, \adsolution\right),
\end{equation*}
which we assume to be affine in the third component for all $(t, \solution)$ in $[0, \tau] \times E$. We define the set $E(t)$ as
\begin{equation*}
E(t) := \Set{\solution \in E | \partial_\adsolution G\left(t, \solution, 0\right)[\testsolution] = 0 \quad \text{ for all } \testsolution \in F}. 
\end{equation*}
To state the theorem, we need the following assumptions.
\begin{enumerate}[label=(A\arabic*), ref=(A\arabic*)]
	\item \label{ass:H0} We assume the following.
	\begin{enumerate}[label=(\roman*) ,ref=(\roman*)]
		\item \label{ass:H0i} The set $E(t)$ is single valued for all $t\in [0,\tau]$ and, hence, we write $E(t) = \set{\solution^t}$.
		
		\item \label{ass:H0ii} For all $t\in [0, \tau]$ and all $\adsolution\in F$ the mapping
		\begin{equation*}
		[0, 1] \to \R; \quad \theta \mapsto G\left(t, \theta \solution^t + (1-\theta)\solution^0, \adsolution\right) 
		\end{equation*}
		is absolutely continuous. 
		
		\item \label{ass:H0iii} For all $t\in [0,\tau]$, all $\testadsolution \in E$, and all $\adsolution\in F$ the mapping
		\begin{equation*}
		[0, 1] \to \R; \quad \theta \mapsto \partial_\solution G\left(t, \theta \solution^t + (1-\theta)\solution^0, \adsolution\right)[\testadsolution]
		\end{equation*}
		is well-defined and belongs to $L^1(0, 1)$.
	\end{enumerate}
\end{enumerate}
Thanks to Assumption~\ref{ass:H0}, we can define the solution set of the averaged adjoint system as
\begin{equation*}
	Y\left(t, \solution^t, \solution^0\right) := \Set{\adsolution \in F | \int_{0}^{1} \partial_\solution G\left(t, \theta \solution^t + (1-\theta)\solution^0, \adsolution\right)[\testadsolution] \dx{\theta} = 0 \quad \text{ for all } \testadsolution \in E }.
\end{equation*}
For $t=0$ we write $Y\left(0, \solution^0\right) := Y\left(0, \solution^0, \solution^0\right)$ which is the solution set of the usual adjoint system, i.e.,
\begin{equation*}
	Y\left(0, \solution^0\right) = \Set{\adsolution \in F | \partial_\solution G\left(t, \solution^0, \adsolution\right)[\testadsolution] = 0 \quad \text{ for all } \testadsolution \in E}.
\end{equation*}
Additionally, we have the following assumptions.
\begin{enumerate}[resume, label=(A\arabic*), ref=(A\arabic*)]
	\item \label{ass:H1} The derivative $\partial_t G\left(t, \solution^0, \adsolution\right)$ exists for all $t\in [0, \tau]$ and all $\adsolution \in F$.
	
	\item \label{ass:H2} The set $Y\left(t, \solution^t, \solution^0\right)$ is nonempty for all $t\in [0, \tau]$, and the set $Y\left(0, \solution^0\right)$ is single valued. Therefore, we write $Y(0, \solution^0) = \set{\adsolution^0}$. 
	
	\item \label{ass:H3} For every sequence $(t_n)_{n\in \mathbb{N}} \subset \R$ with $t_n \geq 0$ and $\lim\limits_{n\to \infty} t_n = 0$ there exists a subsequence $(t_{n_k})_{k\in \mathbb{N}}$ and $\adsolution^{t_{n_k}} \in Y\left(t_{n_k}, \solution^{t_{n_k}}, \solution^0\right)$ such that
	\begin{equation*}
	\lim_{\substack{k\to \infty \\ \theta\searrow 0}} \partial_t G\left(\theta, \solution^0, \adsolution^{t_{n_k}}\right) = \partial_t G\left(0, \solution^0, \adsolution^0\right).
	\end{equation*}
\end{enumerate}
After these preliminaries, we now state the modified version of the Correa-Seeger theorem.
\begin{Theorem}
	\label{thm:sturm}
	Let $G$ as before and let assumptions \ref{ass:H0}--\ref{ass:H3} be satisfied. Then, we have for any $\adsolution \in F$ that
	\begin{equation*}
		\left. \frac{d}{d t} G\left(t, \solution^t, \adsolution\right) \right\rvert_{t=0^+} = \partial_t G\left(0, \solution^0, \adsolution^0\right).
	\end{equation*}
\end{Theorem}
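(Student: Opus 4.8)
The plan is to adapt the proof of the classical Correa--Seeger theorem, the decisive simplification being that $G$ is affine in its third argument. Writing this affine dependence as $G(t,\solution,\adsolution) = G(t,\solution,0) + \partial_\adsolution G(t,\solution,0)[\adsolution]$, the very definition of $E(t)$ shows that $G(t,\solution^t,\adsolution) = G(t,\solution^t,0)$ for every $\adsolution \in F$; in particular the quantity $G(t,\solution^t,\adsolution)$ on the left-hand side of the claim is independent of $\adsolution$, and likewise $G(0,\solution^0,\adsolution) = G(0,\solution^0,\adsolution^0)$ by Assumption~\ref{ass:H0}\ref{ass:H0i}. Hence it suffices to compute $\lim_{t\searrow 0} t^{-1}\big(G(t,\solution^t,\adsolution^t) - G(0,\solution^0,\adsolution^0)\big)$, and we may choose the averaged adjoint state $\adsolution^t \in Y(t,\solution^t,\solution^0)$ --- which is nonempty by Assumption~\ref{ass:H2} --- however is most convenient for the limit argument below.

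Next I would decompose the difference quotient by inserting intermediate states:
\begin{equation*}
\begin{aligned}
\frac{G(t,\solution^t,\adsolution^t) - G(0,\solution^0,\adsolution^0)}{t}
&= \frac{G(t,\solution^t,\adsolution^t) - G(t,\solution^0,\adsolution^t)}{t} + \frac{G(t,\solution^0,\adsolution^t) - G(0,\solution^0,\adsolution^t)}{t} \\
&\quad + \frac{G(0,\solution^0,\adsolution^t) - G(0,\solution^0,\adsolution^0)}{t}.
\end{aligned}
\end{equation*}
The third summand vanishes because $\solution^0 \in E(0)$ (the same affineness argument as above). For the first summand, Assumption~\ref{ass:H0}\ref{ass:H0ii}--\ref{ass:H0iii} ensures that $\theta \mapsto G(t,\theta\solution^t + (1-\theta)\solution^0,\adsolution^t)$ is absolutely continuous with a.e.\ derivative $\theta \mapsto \partial_\solution G(t,\theta\solution^t + (1-\theta)\solution^0,\adsolution^t)[\solution^t - \solution^0]$ in $L^1(0,1)$, so the fundamental theorem of calculus rewrites its numerator as $\int_0^1 \partial_\solution G(t,\theta\solution^t + (1-\theta)\solution^0,\adsolution^t)[\solution^t - \solution^0]\dx{\theta}$, which is zero by the defining property of $Y(t,\solution^t,\solution^0)$ tested against $\solution^t - \solution^0 \in E$. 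For the middle summand I would apply the mean value theorem to $s \mapsto G(s,\solution^0,\adsolution^t)$ on $[0,t]$ --- legitimate by Assumption~\ref{ass:H1} --- obtaining $\theta_t \in (0,t)$ with $t^{-1}\big(G(t,\solution^0,\adsolution^t) - G(0,\solution^0,\adsolution^t)\big) = \partial_t G(\theta_t,\solution^0,\adsolution^t)$. Thus the difference quotient equals $\partial_t G(\theta_t,\solution^0,\adsolution^t)$.

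It then remains to show $\partial_t G(\theta_t,\solution^0,\adsolution^t) \to \partial_t G(0,\solution^0,\adsolution^0)$ as $t \searrow 0$, which I would establish by a subsequence argument. Let $(t_n)$ be any sequence with $t_n \searrow 0$. By Assumption~\ref{ass:H3} there are a subsequence $(t_{n_k})$ and adjoint states $\adsolution^{t_{n_k}} \in Y(t_{n_k},\solution^{t_{n_k}},\solution^0)$ with $\partial_t G(\theta,\solution^0,\adsolution^{t_{n_k}}) \to \partial_t G(0,\solution^0,\adsolution^0)$ in the joint limit $k\to\infty$, $\theta\searrow 0$. Carrying out the construction above with precisely these adjoint states, the associated mean value points satisfy $0 < \theta_{t_{n_k}} < t_{n_k} \to 0$, so the joint limit gives $\partial_t G(\theta_{t_{n_k}},\solution^0,\adsolution^{t_{n_k}}) \to \partial_t G(0,\solution^0,\adsolution^0)$; that is, the difference quotient converges along $(t_{n_k})$ to $\partial_t G(0,\solution^0,\adsolution^0)$. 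Since every sequence $t_n \searrow 0$ admits such a subsequence and the limit is always the same, the one-sided limit exists and equals $\partial_t G(0,\solution^0,\adsolution^0)$; by the independence from $\adsolution$ noted at the outset, this is the claimed identity.

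The main obstacle is exactly this last step: the averaged adjoint state $\adsolution^t$ is in general neither unique nor convergent as $t\searrow 0$, so one cannot simply set $t = 0$ in $\partial_t G(\theta_t,\solution^0,\adsolution^t)$. Assumption~\ref{ass:H3} is designed to circumvent this, and the care required is to keep the (non-unique) $\adsolution^t$, the mean value point $\theta_t$, and the joint limit in~\ref{ass:H3} synchronized within one subsequence. A secondary, more routine technical point is the careful use of the fundamental theorem of calculus under only absolute continuity --- rather than $C^1$ regularity --- of $\theta \mapsto G(t,\theta\solution^t + (1-\theta)\solution^0,\adsolution)$.
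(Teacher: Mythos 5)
Your proof is correct. Note that the paper does not actually prove this theorem itself but only cites \cite[Theorem~3.1]{sturm}; your argument is a faithful reconstruction of that reference's proof --- the same three-term splitting of the difference quotient, the same use of $G(t,\solution^t,\cdot)=\mathrm{const}$ and of the averaged adjoint state (tested with $\solution^t-\solution^0$) to annihilate the first and third terms, and the same mean-value-point/diagonal-subsequence argument synchronized with the joint limit in Assumption~\ref{ass:H3}.
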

\begin{proof}
	The proof can be found in \cite[Theorem~3.1]{sturm}.
\end{proof}
In Section~\ref{sec:shape_differentiability} we establish the connection between this abstract setting and the one of Section~\ref{sec:optimization_problem}. This allows us to calculate the shape derivative by an application of the previous theorem, after verifying its assumptions in Section~\ref{sec:verification}.

\section{Analysis of the State System}
\label{sec:analysis_state_system}

In this section, we analyze the state system \eqref{eq:weak_state_reference} on a domain that is transformed by the speed method. In particular, we prove the existence and uniqueness of (weak) solutions as well as their continuous dependence on the data for small transformations. The results of this section are crucial for the shape sensitivity analysis carried out in Section~\ref{sec:shape_differentiability}. Throughout the rest of this paper, we restrict the spatial dimension $d \in \mathbb{N}$ to $2 \leq d \leq 4$, as all of the following results are valid for this choice. Note, that for the physical application, only the cases $d=2,3$ are relevant.

\subsection{Reformulation as an Abstract Problem}
\label{sec:reformulation}

Let $\Omega \in \admissiblegeom$ be as in Section~\ref{sec:problem_formulation}, i.e., $\Omega$ is a bounded domain with Lipschitz boundary $\Gamma$. We define the set of admissible vector fields by
\begin{equation}
	\label{eq:admissible_deformation}
	\admissibledefo := \Set{\vectorfield \in C^\infty_0(\R^d; \R^d) | \vectorfield = 0 \text{ on } \Gamma\subin \cup \Gamma\subout}.
\end{equation}
Obviously, $\admissibledefo$ is a vector subspace of $C^\infty_0(\R^d;\R^d)$, so that $\vectorfield \in \admissibledefo$ is admissible as direction for the shape derivative (cf. Definition~\ref{def:shape_derivative}). Throughout the rest of this section, let $\vectorfield \in \admissibledefo$ and $\flow$ be its associated flow. It is easy to see that we have $\flow = \text{id}$ on $\Gamma\subin \cup \Gamma\subout$. We assume that $\tau > 0$ is sufficiently small such that the transformed domain $\Omega_t = \flow(\Omega)$ is, like $\Omega$, a bounded Lipschitz domain for all $t\in [0,\tau]$ (cf. \cite{hofman}). In particular, this implies that $\Omega_t \in \admissiblegeom$ for all $t\in [0,\tau]$. The state system on $\Omega_t$ is given by
\begin{equation}
	\label{eq:weak_state}
	\left\lbrace \quad
	\begin{aligned}
		&\text{Find } \solution_t = (\velocity_t, \pressure_t, \temperature_t) \in \statespace(\Omega_t) \text{ such that } \\
		&\qquad \integral{\Omega_t} \viscosity\ D(\velocity_t + \velocity\subin_t) : D\testvelo - \pressure_t\ \divergence{\testvelo} - \testpres\ \divergence{\velocity_t + \velocity\subin_t} + \conductivity \grad (\temperature_t + \temperature\subin_t) \cdot \grad \testtemp \dx{x} \\
		&\qquad + \integral{\Omega_t} \density \hcapacity \left(\velocity_t + \velocity\subin_t\right) \cdot \grad (\temperature_t + \temperature\subin_t)\ \testtemp \dx{x} + \integral{\Gamma_t\subwall} \htc \left((\temperature_t + \temperature\subin_t) - \temperature\subwall\right) \testtemp \dx{s} = 0 \\
		&\text{for all } \testsolution = (\testvelo, \testpres, \testtemp) \in \statespace(\Omega_t),
	\end{aligned}
	\right.
\end{equation}
where we define $\velocity\subin_t := \velocity\subin \circ \flow^{-1}$ and $\temperature\subin_t := \temperature\subin \circ \flow^{-1}$. Due to the choice of $\vectorfield \in \admissibledefo$ we see that $\flow(\Gamma\subin) = \Gamma\subin$ and $\velocity\subin_t = \velocity\subin$ as well as $\temperature\subin_t = \temperature\subin$ on $\Gamma\subin$, i.e., these functions are valid liftings for the boundary conditions on $\Omega_t$. Furthermore, observe that we also have $\velocity\subin_t = 0$ on $\Gamma\subwall_t$.
From these considerations we see that the formulation \eqref{eq:weak_state} is equivalent to the original one from \eqref{eq:weak_state_reference} with the domain $\Omega$ replaced by $\Omega_t$. Using the change of variables $\solution_t = \solution^t \circ \flow^{-1}$ in addition to Lemmas~\ref{lem:shape_mappings} and~\ref{lem:shape_composition} reveals that \eqref{eq:weak_state} is equivalent to the following system
\begin{equation*}
	\left\lbrace\quad
	\begin{aligned}
		&\text{Find } \solution^t = (\velocity^t, \pressure^t, \temperature^t) \in \statespace(\Omega) \text{ such that } \\
		&\qquad \integral{\Omega} \viscosity \left(D(\velocity^t + \velocity\subin) A(t) \right) : D\testvelo  - \pressure^t\ \tr\left( D\testvelo\ B(t)\transposed \right) -\testpres\ \tr\left( D(\velocity^t + \velocity\subin) B(t)\transposed \right) \dx{x} \\
		&\qquad + \integral{\Omega}  \conductivity \left( A(t) \grad (\temperature^t + \temperature\subin ) \right) \cdot \grad \testtemp + \density \hcapacity \left(\velocity^t + \velocity\subin\right) \cdot \left( B(t) \grad (\temperature^t + \temperature\subin) \right) \ \testtemp \dx{x} \\
		&\qquad + \integral{\Gamma\subwall} \htc \left((\temperature^t + \temperature\subin) - \temperature\subwall \circ \flow\right) \testtemp\ \omega(t) \dx{s} = 0 \\
		&\text{for all } \testsolution = (\testvelo, \testpres, \testtemp) \in \statespace(\Omega).
	\end{aligned}
	\right. 
\end{equation*}
To recast this into an abstract setting, we introduce the following mappings
\begin{equation}
	\label{eq:def_bilinear_forms}
	\begin{aligned}
		&a \colon [0, \tau] \times \fspace(\Omega) \times \fspace(\Omega) \to \R; \quad (t, \velocity, \advelo) \mapsto \integral{\Omega} \viscosity \left( D \velocity\ A(t)\right) : D \advelo \dx{x}, \\
		&b \colon [0,\tau] \times \fspace(\Omega) \times \pspace(\Omega) \to \R; \quad (t, \velocity, \adpres) \mapsto -\integral{\Omega} \adpres\ \tr\left( D\velocity\ B(t)\transposed \right) \dx{x}, \\
		& c \colon [0, \tau] \times \fspace(\Omega) \times \tspace(\Omega) \times \tspace(\Omega) \to \R; \quad (t, \velocity, \temperature, \adtemp) \\
		&\quad \mapsto \integral{\Omega} \conductivity \left( A(t) \grad \temperature \right) \cdot \grad \adtemp + \density \hcapacity \left(\velocity + \velocity\subin\right) \cdot \left( B(t) \grad \temperature \right) \adtemp \dx{x} + \integral{\Gamma\subwall} \htc\ \temperature \adtemp\ \omega(t) \dx{s},
	\end{aligned}
\end{equation}
as well as
\begin{equation}
	\label{eq:def_linear_forms_state}
	\begin{aligned}
		&f_\velocity \colon [0, \tau] \times \fspace(\Omega) \to \R; \quad (t, \advelo) \mapsto -\integral{\Omega} \viscosity \left(D \velocity\subin\ A(t) \right) : D \advelo \dx{x}, \\
		&f_\pressure \colon [0, \tau] \times \pspace(\Omega) \to \R; \quad (t, \adpres) \mapsto \integral{\Omega} \adpres\ \tr\left( D\velocity\subin\ B(t)\transposed \right) \dx{x}, \\
		&f_\temperature \colon [0, \tau] \times \fspace(\Omega) \times \tspace(\Omega) \to \R; \quad (t, \velocity, \adtemp) \\
		&\quad \mapsto -\integral{\Omega} \conductivity \left( A(t) \grad \temperature\subin \right) \cdot \grad \adtemp + \density \hcapacity \left( \velocity + \velocity\subin \right)\cdot \left( B(t) \grad \temperature\subin \right)\ \adtemp \dx{x} \\
		&\quad\qquad - \integral{\Gamma\subwall} \htc \left( \temperature\subin - \temperature\subwall \circ \flow \right) \adtemp\ \omega(t) \dx{s}.	
	\end{aligned}
\end{equation}
Moreover, we define the mapping $e \colon [0,\tau] \times \statespace(\Omega) \times \statespace(\Omega) \to \R$ by
\begin{equation*}
	e(t, \solution, \adsolution) = a(t, \velocity, \testvelo) + b(t, \testvelo, \pressure) + b(t, \velocity, \testpres) + c(t, \velocity, \temperature, \testtemp) - f_\velocity(t ,\testvelo) - f_\pressure(t, \testpres) - f_\temperature(t, \velocity, \testtemp),
\end{equation*}
where $\solution = (\velocity, \pressure, \temperature) \in \statespace(\Omega)$ and $\adsolution = (\testvelo, \testpres, \testtemp) \in \statespace(\Omega)$. Finally, using the definitions of the mappings in \eqref{eq:def_bilinear_forms} and \eqref{eq:def_linear_forms_state} reveals that \eqref{eq:weak_state} is equivalent to
\begin{equation}
	\label{eq:abstract_weak_state}
	\text{Find } \solution^t = \left(\velocity^t, \pressure^t, \temperature^t \right) \in \statespace(\Omega) \text{ such that } \qquad e\left(t, \solution^t, \testsolution\right) = 0 \qquad \text{ for all } \testsolution = \left(\testvelo, \testpres, \testtemp \right) \in \statespace(\Omega).
\end{equation}
It is easy to see that we can rewrite \eqref{eq:abstract_weak_state} equivalently as the following one-way coupled system
\begin{equation}
	\label{eq:one_way_stokes}
	\left \lbrace \quad
	\begin{aligned}
		&\text{Find } (\velocity^t, \pressure^t) \in \fspace(\Omega) \times \pspace(\Omega) \text{ such that } \\
		&\qquad 
		\begin{alignedat}{2}
			a(t, \velocity^t, \testvelo) + b(t, \testvelo, \pressure^t) &= f_\velocity(t, \testvelo) \quad &&\text{ for all } \testvelo \in \fspace(\Omega), \\
			b(t, \velocity^t, \testpres) &= f_\pressure(t, \testpres) \quad &&\text{ for all } \testpres \in \pspace(\Omega), 
		\end{alignedat}
	\end{aligned}
	\right.
\end{equation}
and
\begin{equation}
	\label{eq:one_way_temperature}
	\left\lbrace\quad
	\begin{aligned}
		&\text{Find } \temperature^t \in \tspace(\Omega) \text{ such that } \\
		&\qquad c(t, \velocity^t, \temperature^t, \testtemp) = f_\temperature(t, \velocity^t, \testtemp) \quad \text{ for } (\velocity^t, \pressure^t) \text{ solving } \eqref{eq:one_way_stokes} \text{ and all } \testtemp \in \tspace(\Omega).
	\end{aligned}
	\right.
\end{equation}
We use this equivalent formulation in the following section to prove the well-posedness of the state system.

\subsection{Well-Posedness of the Abstract Problem}
To show the well-posedness of the abstract problem \eqref{eq:abstract_weak_state} we first analyze the mappings from \eqref{eq:def_bilinear_forms} and \eqref{eq:def_linear_forms_state}.
We start by introducing the following assumption.
\begin{Assumption}
	\label{ass:smallness}
	We assume that the inlet velocity $\velocity\subin \in H^1(\Omega)^d$ is sufficiently small, i.e., $\norm{\velocity\subin}{H^1(\Omega)^d}$ is sufficiently small.
\end{Assumption}
\begin{Remark*}
	This assumption is only needed to ensure the coercivity of the bilinear form to prove the well-posedness of the convection-diffusion equation \eqref{eq:one_way_temperature} with a velocity obtained from the Stokes system \eqref{eq:one_way_stokes}. The well-posedness of the latter is completely independent of the assumption (cf. the proof of Lemma~\ref{lem:analysis_state_system}). 
	
	Note, that while Assumption~\ref{ass:smallness} is not restrictive mathematically, it might be physically. An alternative assumption that would ensure the coercivity for the convection-diffusion equation is given by $(\velocity + \velocity\subin) \cdot \normal \geq 0$ a.e. on $\Gamma\subout$, i.e., the fluid should not re-enter the domain at the outlet. Physically, this assumption is very reasonable and we observed that it is indeed satisfied for the numerical optimization in \cite{blauth}. However, this assumption is very restrictive mathematically, as it has to hold for all admissible domains $\Omega$, which is why we use Assumption~\ref{ass:smallness} instead. Another way to ensure the coercivity would be a modification of the boundary conditions for the temperature, e.g., by prescribing the outlet velocity so that we have no re-entrant flow, but this would not fit to our physical model.

\end{Remark*}
\begin{Lemma}
	\label{lem:operator_analysis}
	Let $\tau > 0$ be sufficiently small and $t\in [0,\tau]$, and let Assumption~\ref{ass:smallness} hold. Then, there exist constants $C(\Omega) > 0$, independent of $t$, such that we have the following:
	\begin{enumerate}
		\item The mapping $a(t, \cdot, \cdot)$ is a symmetric, continuous, and coercive bilinear form. In particular, it holds
		\begin{equation*}
			a(t, \velocity, \velocity) \geq C(\Omega) \norm{\velocity}{\fspace(\Omega)}^2 \quad \text{ for all } \velocity \in \fspace(\Omega).
		\end{equation*}

		\item The mapping $b(t, \cdot, \cdot)$ is bilinear, continuous, and satisfies the following LBB condition
		\begin{equation*}
			\sup_{\velocity \in \fspace(\Omega), \velocity\neq 0} \frac{b(t, \velocity, \adpres)}{\norm{\velocity}{\fspace(\Omega)} } \geq C(\Omega) \norm{\adpres}{\pspace(\Omega)} \quad \text{ for all } \adpres\in \pspace(\Omega),
		\end{equation*}
		which is named after Ladyzhenskaya, Babu\v{s}ka, and Brezzi, and is also known as inf-sup condition (cf. \cite{john}).

		\item Let $w \in \fspace(\Omega)$ with $\norm{w}{\fspace(\Omega)}$ sufficiently small. Then, the mapping $c(t, w, \cdot, \cdot)$ is a continuous and coercive bilinear form which satisfies
		\begin{equation*}
			c(t, w, \temperature, \temperature) \geq C(\Omega) \norm{\temperature}{\tspace(\Omega)}^2 \quad \text{ for all } \temperature \in \tspace(\Omega).
		\end{equation*}
	\end{enumerate}
\end{Lemma}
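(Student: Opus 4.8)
The plan is to reduce every statement to the situation at $t=0$ by exploiting that, by Lemma~\ref{lem:shape_mappings}, the maps $t\mapsto A(t)$, $t\mapsto B(t)$ and $t\mapsto \omega(t)$ are continuous with $A(0)=B(0)=I$ and $\omega(0)=1$. Hence, after shrinking $\tau$, there is a quantity $\delta(\tau)$ with $\delta(\tau)\to 0$ as $\tau\searrow 0$ such that $\norm{A(t)-I}{C(\R^d)}, \norm{B(t)-I}{C(\R^d)}, \norm{\omega(t)-1}{C(\R^d)}\leq \delta(\tau)$ for all $t\in[0,\tau]$; in particular $A(t),B(t),\omega(t)$ are bounded in the supremum norm uniformly in $t\in[0,\tau]$, and we may assume $\omega(t)>0$. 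Each estimate below is then obtained from its $t=0$ counterpart by absorbing an error proportional to $\delta(\tau)$, which keeps every constant independent of $t$. Throughout I will use the Poincaré--Friedrichs inequality on $\fspace(\Omega)$ and $\tspace(\Omega)$ --- available since their elements vanish on $\Gamma\subin\cup\Gamma\subwall$ and on $\Gamma\subin$ respectively, sets of positive surface measure --- so that the $L^2$-norm of the gradient is an equivalent norm on each of the two spaces.

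For (1): bilinearity is clear, and $a(t,\cdot,\cdot)$ is symmetric because $A(t)=\xi(t)D\flow^{-1}D\flow^{-\top}$ is a symmetric matrix, whence $\left(D\velocity\, A(t)\right):D\advelo = \left(D\advelo\, A(t)\right):D\velocity$ pointwise. Continuity is immediate from $\abs{a(t,\velocity,\advelo)}\leq \viscosity\norm{A(t)}{C(\R^d)}\norm{D\velocity}{L^2(\Omega)}\norm{D\advelo}{L^2(\Omega)}$, and splitting $A(t)=I+(A(t)-I)$ gives $a(t,\velocity,\velocity)\geq \viscosity(1-\delta(\tau))\norm{D\velocity}{L^2(\Omega)}^2$, so coercivity follows from Poincaré. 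For (2): bilinearity and continuity follow as for $a$. For the LBB condition, note that $b(0,\velocity,\adpres)=-\integral{\Omega}\adpres\,\divergence{\velocity}\dx{x}$ is the classical Stokes bilinear form, and since the do-nothing condition on $\Gamma\subout$ (of positive measure) removes the usual zero-mean constraint on the pressure, the pair $\fspace(\Omega)\times L^2(\Omega)$ satisfies the inf-sup condition (see, e.g., \cite{john}) with some constant $\beta_0=\beta_0(\Omega)>0$. Writing $b(t,\velocity,\adpres)=b(0,\velocity,\adpres)-\integral{\Omega}\adpres\,\tr\left( D\velocity\,(B(t)-I)\transposed \right)\dx{x}$ and taking the supremum over $\velocity\neq 0$ in $\fspace(\Omega)$, the perturbation contributes at most $C\,\delta(\tau)\norm{\adpres}{L^2(\Omega)}$, so the claim holds with $C(\Omega)=\beta_0/2$ once $\tau$ is small enough.

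For (3) --- the only genuinely delicate point --- decompose $c(t,w,\temperature,\adtemp)$ into its diffusion, convection and Robin parts. The diffusion part is treated exactly as $a$, contributing $\geq \conductivity(1-\delta(\tau))\norm{\grad\temperature}{L^2(\Omega)}^2$ to the coercivity bound; the Robin part $\htc\integral{\Gamma\subwall}\temperature^2\,\omega(t)\dx{s}$ is nonnegative (for $\tau$ small) and, for continuity, bounded by $C\norm{\omega(t)}{C(\R^d)}\norm{\temperature}{H^1(\Omega)}\norm{\adtemp}{H^1(\Omega)}$ via the trace theorem. The convection part $\integral{\Omega}\density\hcapacity(w+\velocity\subin)\cdot\left(B(t)\grad\temperature\right)\adtemp\dx{x}$ is estimated by the generalized Hölder inequality with exponents $4,2,4$ and the embedding $H^1(\Omega)\embedding L^4(\Omega)$, valid precisely because $d\leq 4$; this yields a bound $C\norm{w+\velocity\subin}{H^1(\Omega)}\norm{\grad\temperature}{L^2(\Omega)}\norm{\adtemp}{H^1(\Omega)}$ with $C$ uniform in $t$, hence continuity of $c(t,w,\cdot,\cdot)$. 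Setting $\adtemp=\temperature$ and invoking Poincaré, the convection part is bounded in modulus by $C\norm{w+\velocity\subin}{H^1(\Omega)}\norm{\grad\temperature}{L^2(\Omega)}^2$, so that
\begin{equation*}
	c(t,w,\temperature,\temperature)\geq \left(\conductivity(1-\delta(\tau))-C\norm{w+\velocity\subin}{H^1(\Omega)}\right)\norm{\grad\temperature}{L^2(\Omega)}^2.
\end{equation*}
By Assumption~\ref{ass:smallness} together with the hypothesis that $\norm{w}{\fspace(\Omega)}$ is sufficiently small, the prefactor is bounded below by, say, $\conductivity/4>0$, and a final use of Poincaré gives $c(t,w,\temperature,\temperature)\geq C(\Omega)\norm{\temperature}{\tspace(\Omega)}^2$. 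The main obstacle is thus the convection term: in the absence of a sign condition on the outlet flux or of a (transported) divergence-free constraint on $w$ --- neither of which is assumed in this lemma --- the smallness of $\velocity\subin$ and $w$ is exactly what is needed to absorb it into the diffusion term, while the $t$-uniformity of all constants is a direct consequence of Lemma~\ref{lem:shape_mappings}.
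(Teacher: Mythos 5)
Your proposal is correct and follows essentially the same route as the paper: symmetry of $A(t)$ for (1), a perturbation of the $t=0$ inf-sup condition for (2) (the paper obtains that baseline condition explicitly from the surjectivity of the divergence operator and the open mapping theorem rather than citing it directly), and for (3) the splitting into diffusion, convection, and Robin parts with the $H^1(\Omega)\embedding L^4(\Omega)$ embedding and the smallness of $w$ and $\velocity\subin$ to absorb the convection term. The uniform-in-$t$ control via Lemma~\ref{lem:shape_mappings} is exactly the paper's argument as well.
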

\begin{proof}
	Let $t\in [0,\tau]$ and $w \in \fspace(\Omega)$ with $\norm{w}{\fspace(\Omega)}$ sufficiently small. It is obvious from their definitions that the mappings $a(t, \cdot, \cdot)$, $b(t, \cdot, \cdot)$ and $c(t, w, \cdot, \cdot)$ are bilinear.
	\begin{enumerate}
		\item Let $\velocity, \advelo \in \fspace(\Omega)$. The symmetry of $a(t, \cdot, \cdot)$ directly follows from the symmetry of $A(t)$. Due to Lemma~\ref{lem:shape_mappings} the mapping $t\mapsto \norm{A(t)}{L^\infty(\Omega)^{d\times d}}$ is continuous and, thus, attains its maximum on the interval $[0,\tau]$. Using this in conjunction with the H\"older inequality yields the continuity of $a(t, \cdot, \cdot)$ via
		\begin{equation*}
			\begin{aligned}
				\abs{\integral{\Omega} \viscosity \left( D \velocity\ A(t)\right) : D \advelo \dx{x}} &\leq C \sup_{t \in [0,\tau]} \norm{A(t)}{L^\infty(\Omega)^{d\times d}} \norm{\velocity}{\fspace(\Omega)} \norm{\advelo}{\fspace(\Omega)} \\
				&\leq C \norm{\velocity}{\fspace(\Omega)} \norm{\advelo}{\fspace(\Omega)},
			\end{aligned}
		\end{equation*}
		where the maximum (and, therefore, also the constant $C>0$) is independent of $t$.
		We obtain the desired coercivity by the following calculation
		\begin{equation*}
			\begin{aligned}
				a(t, \velocity, \velocity) & = \integral{\Omega} \viscosity\ D\velocity : D\velocity \dx{x} - \integral{\Omega} \viscosity \left( D\velocity (I - A(t)) \right) : D\velocity \dx{x} \\
				&\geq \left( C(\Omega) - C \sup_{t \in [0,\tau]} \norm{I - A(t)}{L^\infty(\Omega)^{d\times d}} \right) \norm{\velocity}{\fspace(\Omega)}^2,
			\end{aligned}
		\end{equation*}
		where we used the Poincar\'e and H\"older inequality for the first and second term, respectively. As a consequence of Lemma~\ref{lem:shape_mappings} it is easy to see that the mapping $t\mapsto \norm{I - A(t)}{L^\infty(\Omega)^{d\times d}}$ is continuous and, thus, attains its maximum on $[0,\tau]$. This maximum is independent of $t$ and, since $A(0) = I$, it becomes arbitrarily small if we choose $\tau > 0$ sufficiently small. Therefore, we finally obtain a constant $C(\Omega)$ independent of $t$ such that
		\begin{equation*}
			a(t, \velocity, \velocity) \geq C(\Omega) \norm{\velocity}{\fspace(\Omega)}^2 \quad \text{ for all } \velocity \in \fspace(\Omega).
		\end{equation*}

		\item Let $\adpres \in \pspace(\Omega)$ and $\velocity \in \fspace(\Omega)$. The continuity of the mapping $b(t, \cdot, \cdot)$ follows from the H\"older inequality and Lemma~\ref{lem:shape_mappings} due to
		\begin{equation*}
			\begin{aligned}
				\abs{\integral{\Omega} \adpres\ \tr\left( D\velocity B(t)\transposed \right) \dx{x}} &\leq C \sup_{t \in [0,\tau]} \norm{B(t)\transposed}{L^\infty(\Omega)^{d\times d}} \norm{q}{\pspace(\Omega)} \norm{\velocity}{\fspace(\Omega)} \\
				&\leq C \norm{q}{\pspace(\Omega)} \norm{\velocity}{\fspace(\Omega)}.
			\end{aligned}
		\end{equation*}
		For the LBB condition we first observe that the divergence operator $\text{div}\colon \fspace(\Omega) \to \pspace(\Omega)$ is surjective by a slight modification of the proof given in \cite[Lemma~4.9]{ern_guermond}. Then, applying \cite[Lemma~2.1]{gatica}, which is a consequence of the open mapping theorem, yields the existence of a constant $C(\Omega) > 0$ such that
		\begin{equation}
			\label{eq:lbb}
			\sup_{\velocity \in \fspace(\Omega), \velocity \neq 0} \frac{\integral{\Omega} q\ \divergence{u} \dx{x} }{\norm{\velocity}{\fspace(\Omega)}} \geq C(\Omega) \norm{\adpres}{\pspace(\Omega)} \quad \text{ for all } \adpres \in \pspace(\Omega).
		\end{equation}
		Similarly to, e.g., \cite{fumagalli, girault_lbb}, we have the following for the LBB condition
		\begin{equation*}
			\begin{aligned}
				\sup_{\velocity\in \fspace(\Omega), \velocity\neq 0} \frac{b(t, \velocity, \adpres)}{\norm{\velocity}{\fspace(\Omega)}} &= \sup_{\velocity \in \fspace(\Omega), \velocity\neq 0} \left( \frac{\integral{\Omega} \adpres\ \divergence{\velocity} \dx{x}}{\norm{\velocity}{\fspace(\Omega)}}  - \frac{\integral{\Omega} \adpres\ \tr\left( D\velocity (I - B(t)\transposed) \right) \dx{x}}{\norm{\velocity}{\fspace(\Omega)}} \right)  \\
				%
				&\geq \left( C(\Omega) - C \sup_{t \in [0,\tau]} \norm{I - B(t)\transposed}{L^\infty(\Omega)^{d\times d}} \right) \norm{\adpres}{\pspace(\Omega)} \\
				&\geq C(\Omega) \norm{\adpres}{\pspace(\Omega)},
			\end{aligned}
		\end{equation*}
		where we used \eqref{eq:lbb} to estimate the first term and H\"older's inequality for the second. As before, the final estimate and the fact that $C(\Omega)$ is independent of $t$ follows from Lemma~\ref{lem:shape_mappings}.

		\item Let $w$ be given as above and $\temperature, \adtemp \in \tspace(\Omega)$. For the continuity of $c(t, w, \cdot, \cdot)$ we use the continuous embedding $H^1(\Omega) \embedding L^4(\Omega)$ (due to the Sobolev embedding theorem), the H\"older inequality, and Lemma~\ref{lem:shape_mappings} to obtain
		\begin{equation*}
			\begin{aligned}
				&\abs{\integral{\Omega} \conductivity \left( A(t) \grad \temperature \right) \cdot \grad \adtemp + \density \hcapacity \left(w + \velocity\subin\right) \cdot \left( B(t) \grad \temperature \right)\ \adtemp \dx{x} + \integral{\Gamma\subwall} \htc\ \temperature \adtemp\ \omega(t) \dx{s}} \\
				%
				%
				\leq\ & C(\Omega) \sup_{t \in [0,\tau]} \left( \norm{A(t)}{L^\infty(\Omega)^{d\times d}} + \norm{B(t)}{L^\infty(\Omega)^{d\times d}} + \norm{\omega(t)}{L^\infty(\Omega)}  \right) \norm{w}{\fspace(\Omega)} \norm{\temperature}{\tspace(\Omega)} \norm{\adtemp}{\tspace(\Omega)} \\
				\leq\ & C(\Omega) \norm{w}{\fspace(\Omega)} \norm{\temperature}{\tspace(\Omega)} \norm{\adtemp}{\tspace(\Omega)}.
			\end{aligned}
		\end{equation*}
		For the coercivity of $c(t, w, \cdot, \cdot)$ we observe that
		\begin{equation*}
			\begin{aligned}
				c(t, w, \temperature, \temperature) &= \integral{\Omega} \conductivity \grad \temperature \cdot \grad \temperature \dx{x} - \integral{\Omega} \conductivity \left( (I - A(t)) \grad \temperature \right) \cdot \grad \temperature \dx{x}  \\
				&\quad + \integral{\Omega} \density\hcapacity \left( w + \velocity\subin \right) \cdot \left( B(t) \grad \temperature \right) \temperature \dx{x} + \integral{\Gamma\subwall} \htc\ \temperature^2\ \omega(t) \dx{s} \\
				&\geq \left( C(\Omega) - C \sup_{t \in [0,\tau]} \norm{I - A(t)}{L^\infty(\Omega)^{d\times d}} \right) \norm{\temperature}{\tspace(\Omega)}^2 \\
				&\quad - C \sup_{t \in [0,\tau]} \norm{B(t)}{L^\infty(\Omega)^{d\times d}} \left( \norm{w}{\fspace(\Omega)} + \norm{\velocity\subin}{H^1(\Omega)^d} \right) \norm{\temperature}{\tspace(\Omega)}^2 \\
				&\geq C(\Omega) \norm{\temperature}{\tspace(\Omega)}^2,
			\end{aligned}
		\end{equation*}
		where we used the Poincar\'e inequality for the first term, the H\"older inequality for the second and third term, and the fact that $\omega(t) > 0$ for sufficiently small $\tau$ for the final term. The last estimate is obtained analogously from Lemma~\ref{lem:shape_mappings}, where we also used the smallness of $w$ and $\velocity\subin$. \qedhere
	\end{enumerate}
\end{proof}

\begin{Lemma}
	\label{lem:analysis_linear_forms}
	Let $\tau > 0$ be sufficiently small, $t\in [0,\tau]$ and $\velocity \in \fspace(\Omega)$. Then, the mappings $f_\velocity(t, \cdot)$, $f_\pressure(t, \cdot)$, and $f_\temperature(t, \velocity, \cdot)$ are linear and continuous. In particular, there exists a constant $C>0$ that is independent of $t$ such that
	\begin{equation*}
		\begin{aligned}
			&\abs{f_\velocity(t, \advelo)} \leq C \norm{\velocity\subin}{H^1(\Omega)^d} \norm{\advelo}{\fspace(\Omega)} \quad \text{ for all } \advelo \in \fspace(\Omega), \\
			&\abs{f_\pressure(t, \adpres)} \leq C \norm{\velocity\subin}{H^1(\Omega)} \norm{\adpres}{\pspace(\Omega)} \quad \text{ for all } \adpres \in \pspace(\Omega), \\
			&\abs{f_\temperature(t, \velocity, \adtemp)} \leq C \left( \left( \norm{\velocity}{\fspace(\Omega)} + \norm{\velocity\subin}{H^1(\Omega)^d} + 1 \right) \norm{\temperature\subin}{H^1(\Omega)} + \norm{\temperature\subwall}{H^2(\R^d)} \right) \norm{\adtemp}{\tspace(\Omega)} \\
			&\hspace{6em} \quad \text{ for all } \velocity \in \fspace(\Omega) \text{ and  all } \adtemp \in \tspace(\Omega).
		\end{aligned}
	\end{equation*}
\end{Lemma}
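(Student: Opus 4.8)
\emph{Proof plan.} Linearity of $f_\velocity(t,\cdot)$, $f_\pressure(t,\cdot)$ and $f_\temperature(t,\velocity,\cdot)$ in their last argument is immediate from the definitions in \eqref{eq:def_linear_forms_state}, since the factors $A(t)$, $B(t)$, $\omega(t)$, $\velocity\subin$, $\temperature\subin$ and $\temperature\subwall \circ \flow$ do not involve the test functions. For the continuity bounds the plan is to estimate each term of \eqref{eq:def_linear_forms_state} by Hölder's inequality and to absorb all transformation-dependent factors via Lemma~\ref{lem:shape_mappings}: the maps $t \mapsto \norm{A(t)}{L^\infty(\Omega)^{d\times d}}$, $t \mapsto \norm{B(t)}{L^\infty(\Omega)^{d\times d}}$ and $t \mapsto \norm{\omega(t)}{L^\infty(\Gamma\subwall)}$ are continuous on the compact interval $[0,\tau]$, hence bounded there by a constant that does not depend on $t$. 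This is precisely what makes the resulting constants $t$-independent.

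For $f_\velocity$ and $f_\pressure$ this is then routine: Cauchy--Schwarz gives $\abs{f_\velocity(t,\advelo)} \leq \viscosity\, \norm{A(t)}{L^\infty(\Omega)^{d\times d}} \norm{D\velocity\subin}{L^2(\Omega)^{d\times d}} \norm{D\advelo}{L^2(\Omega)^{d\times d}} \leq C \norm{\velocity\subin}{H^1(\Omega)^d} \norm{\advelo}{\fspace(\Omega)}$, and in the same way $\abs{f_\pressure(t,\adpres)} \leq C \norm{\velocity\subin}{H^1(\Omega)} \norm{\adpres}{\pspace(\Omega)}$ using $\norm{B(t)\transposed}{L^\infty(\Omega)^{d\times d}}$. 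The diffusive part of $f_\temperature$ is handled identically, yielding a contribution $\leq C \norm{\temperature\subin}{H^1(\Omega)} \norm{\adtemp}{\tspace(\Omega)}$.

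The two terms of $f_\temperature$ that need a little more care are the convective volume term and the boundary term. For the former I would use the Sobolev embedding $H^1(\Omega) \embedding L^4(\Omega)$ — available exactly because $2 \leq d \leq 4$, which is why the dimension was restricted at the start of this section — to bound $\abs{\integral{\Omega} \density \hcapacity (\velocity + \velocity\subin) \cdot (B(t) \grad \temperature\subin) \adtemp \dx{x}}$ by $C \norm{B(t)}{L^\infty(\Omega)^{d\times d}} \norm{\velocity + \velocity\subin}{L^4(\Omega)^d} \norm{\grad \temperature\subin}{L^2(\Omega)^d} \norm{\adtemp}{L^4(\Omega)}$, which is $\leq C \big( \norm{\velocity}{\fspace(\Omega)} + \norm{\velocity\subin}{H^1(\Omega)^d} \big) \norm{\temperature\subin}{H^1(\Omega)} \norm{\adtemp}{\tspace(\Omega)}$. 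For the boundary term, I would first note that $\flow$ is a diffeomorphism equal to $\text{id}$ outside a fixed compact set, so the change-of-variables and chain-rule formulas of Lemma~\ref{lem:shape_composition} together with the $[0,\tau]$-uniform bounds on $D\flow$ and $\det(D\flow)$ show that $f \mapsto f \circ \flow$ is bounded on $H^1(\R^d)$ uniformly in $t$; combined with the trace theorem this gives $\norm{\temperature\subwall \circ \flow}{L^2(\Gamma\subwall)} \leq C \norm{\temperature\subwall \circ \flow}{H^1(\R^d)} \leq C \norm{\temperature\subwall}{H^1(\R^d)} \leq C \norm{\temperature\subwall}{H^2(\R^d)}$ (so $H^1(\R^d)$ regularity of $\temperature\subwall$ would already suffice here, but the estimate is stated with the $H^2$-norm for later consistency). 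Using this and the trace estimate for $\temperature\subin$ and $\adtemp$, the boundary term is bounded by $C \big( \norm{\temperature\subin}{H^1(\Omega)} + \norm{\temperature\subwall}{H^2(\R^d)} \big) \norm{\adtemp}{\tspace(\Omega)}$. Adding the three contributions to $f_\temperature$ gives the claimed bound.

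I do not expect any real obstacle: the only thing to be careful about is the bookkeeping of $t$-independence, i.e.\ checking that every transformation-dependent quantity appearing above (the $L^\infty$-norms of $A(t)$, $B(t)$, $\omega(t)$ and the operator norm of $f \mapsto f \circ \flow$ on $H^1(\R^d)$) is bounded uniformly for $t \in [0,\tau]$ — which is exactly what the continuity statements in Lemmas~\ref{lem:shape_mappings} and~\ref{lem:shape_composition} provide on the compact interval $[0,\tau]$. Note that, unlike in Lemma~\ref{lem:operator_analysis}, no smallness assumption on $\velocity\subin$ is used.
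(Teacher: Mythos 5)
Your proposal is correct and follows exactly the route the paper takes: its proof of this lemma is a single sentence deferring to H\"older's inequality and the uniform-in-$t$ bounds on $A(t)$, $B(t)$, $\omega(t)$ from Lemma~\ref{lem:shape_mappings}, applied as in the proof of Lemma~\ref{lem:operator_analysis}. Your fleshed-out version (including the $H^1(\Omega)\embedding L^4(\Omega)$ embedding for the convective term and the trace argument for the boundary term, with the correct observation that $H^1(\R^d)$ regularity of $\temperature\subwall$ already suffices at this stage) is precisely what that one-line proof implicitly relies on.
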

\begin{proof}
	The proof follows from the H\"older inequality and Lemma~\ref{lem:shape_mappings} applied analogously to the proof of Lemma~\ref{lem:operator_analysis}.
\end{proof}

\begin{Lemma}
	\label{lem:analysis_state_system}
	Let Assumption~\ref{ass:smallness} hold and $\tau > 0$ be sufficiently small. Then, for every $t\in [0, \tau]$ the state system \eqref{eq:abstract_weak_state} is well-posed, i.e., there exists a unique solution $\solution^t = (\velocity^t, \pressure^t, \temperature^t) \in \statespace(\Omega)$ of \eqref{eq:abstract_weak_state} that depends continuously on the data. In particular, there is a constant $C(\Omega) > 0$ which is independent of $t$ such that
	\begin{equation}
		\label{eq:energy_estimates}
		\begin{aligned}
			&\norm{\solution^t}{\statespace(\Omega)} = \norm{\velocity^t}{\fspace(\Omega)} + \norm{\pressure^t}{\pspace(\Omega)} + \norm{\temperature^t}{\tspace(\Omega)} \\
			\leq\ &C(\Omega) \left( \norm{\velocity\subin}{H^1(\Omega)^d} + \norm{\temperature\subin}{H^1(\Omega)} + \norm{\temperature\subwall}{H^2(\R^d)} \right) \norm{\adtemp}{\tspace(\Omega)} \quad \text{ for all } t\in [0, \tau].
		\end{aligned}
	\end{equation}
\end{Lemma}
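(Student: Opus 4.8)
The plan is to exploit the one-way coupled structure of the abstract state system, i.e., the equivalent formulation \eqref{eq:one_way_stokes}--\eqref{eq:one_way_temperature}: first solve the Stokes subproblem \eqref{eq:one_way_stokes} for the velocity-pressure pair $(\velocity^t, \pressure^t)$, and then use the resulting velocity as a (fixed) coefficient in the convection-diffusion subproblem \eqref{eq:one_way_temperature} to obtain $\temperature^t$. Throughout, the point to watch is that every constant must be independent of $t$; this is guaranteed because the $t$-dependent quantities entering the forms --- namely $A(t)$, $B(t)$, $\omega(t)$ from Lemma~\ref{lem:shape_mappings} --- depend continuously on $t$ and are hence uniformly bounded on $[0,\tau]$, which is precisely how the $t$-independent bounds in Lemmas~\ref{lem:operator_analysis} and~\ref{lem:analysis_linear_forms} were obtained.

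For the Stokes subproblem I would invoke the Babu\v{s}ka-Brezzi theorem for linear saddle point problems (see, e.g., \cite{ern_guermond, john}). Its hypotheses are supplied by Lemma~\ref{lem:operator_analysis}: $a(t,\cdot,\cdot)$ is continuous and coercive on $\fspace(\Omega)$, while $b(t,\cdot,\cdot)$ is continuous and satisfies the LBB condition, all with constants independent of $t$; the right-hand sides $f_\velocity(t,\cdot)$ and $f_\pressure(t,\cdot)$ are continuous by Lemma~\ref{lem:analysis_linear_forms}, with norms bounded by $C\norm{\velocity\subin}{H^1(\Omega)^d}$ uniformly in $t$. This yields a unique $(\velocity^t, \pressure^t) \in \fspace(\Omega)\times \pspace(\Omega)$ and the estimate $\norm{\velocity^t}{\fspace(\Omega)} + \norm{\pressure^t}{\pspace(\Omega)} \leq C(\Omega)\norm{\velocity\subin}{H^1(\Omega)^d}$ with $C(\Omega)$ independent of $t$. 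Crucially, combining this bound with Assumption~\ref{ass:smallness} shows that $\norm{\velocity^t}{\fspace(\Omega)}$ is as small as needed --- this is exactly the smallness of $w$ required to apply part~(3) of Lemma~\ref{lem:operator_analysis} with $w = \velocity^t$.

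With $\velocity^t$ so fixed, the temperature subproblem \eqref{eq:one_way_temperature} is a linear elliptic equation for $\temperature^t \in \tspace(\Omega)$. By Lemma~\ref{lem:operator_analysis}(3) the form $c(t,\velocity^t,\cdot,\cdot)$ is continuous and coercive on $\tspace(\Omega)$ with $t$-independent constants, and $f_\temperature(t,\velocity^t,\cdot)$ is a continuous linear functional by Lemma~\ref{lem:analysis_linear_forms}; the Lax-Milgram theorem then provides a unique $\temperature^t$ together with a bound on $\norm{\temperature^t}{\tspace(\Omega)}$ in terms of $\norm{\velocity^t}{\fspace(\Omega)} + \norm{\velocity\subin}{H^1(\Omega)^d} + \norm{\temperature\subin}{H^1(\Omega)} + \norm{\temperature\subwall}{H^2(\R^d)}$. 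Inserting the Stokes estimate for $\norm{\velocity^t}{\fspace(\Omega)}$ and collecting the three bounds gives the unique solvability of \eqref{eq:abstract_weak_state} --- which is equivalent to the pair \eqref{eq:one_way_stokes}--\eqref{eq:one_way_temperature} --- together with the asserted energy estimate \eqref{eq:energy_estimates}.

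The only genuinely delicate point is the apparently circular interplay between the regularity of the transported velocity and the coercivity of the temperature form $c$, which holds only for small $w$. This is resolved by the one-way coupling --- $\velocity^t$ is determined first, with no reference to the temperature --- and by propagating Assumption~\ref{ass:smallness} through the $t$-uniform Stokes estimate. Everything else is a routine application of standard saddle-point and Lax-Milgram theory, the only bookkeeping being to verify, as indicated above, that no constant picks up a hidden $t$-dependence.
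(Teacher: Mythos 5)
Your proposal is correct and follows essentially the same route as the paper: decompose into the one-way coupled Stokes and convection-diffusion subproblems, apply the saddle-point well-posedness theorem (the paper invokes it as the Lions--Lax--Milgram theorem, which is the same result you call Babu\v{s}ka--Brezzi) to the former using Lemma~\ref{lem:operator_analysis} and Lemma~\ref{lem:analysis_linear_forms}, propagate the smallness of $\velocity^t$ from the $t$-uniform Stokes estimate and Assumption~\ref{ass:smallness} into the coercivity of $c(t,\velocity^t,\cdot,\cdot)$, and conclude via Lax--Milgram and addition of the two estimates. The paper's proof does exactly this, including your observation that the constants remain $t$-independent because they are built from the $t$-independent constants of the preceding lemmas.
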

\begin{proof}
	Let $t\in [0,\tau]$. For the proof we switch to the equivalent one-way coupled formulation of \eqref{eq:abstract_weak_state} given by \eqref{eq:one_way_stokes} and \eqref{eq:one_way_temperature}. 
	
	We start by investigating the Stokes system \eqref{eq:one_way_stokes}. In Lemma~\ref{lem:operator_analysis} we have shown that the bilinear form $a(t, \cdot, \cdot)$ is continuous and coercive and that the bilinear form $b(t, \cdot, \cdot)$ is continuous and satisfies the LBB condition \eqref{eq:lbb}. Thanks to Lemma~\ref{lem:analysis_linear_forms} we know that the right-hand sides $f_\velocity(t, \cdot)$ and $f_\pressure(t, \cdot)$ are continuous linear functionals. Therefore, the Lions-Lax-Milgram theorem (see, e.g., \cite[Theorem~2.34]{ern_guermond} or \cite{john, girault}) gives us the existence of a unique solution $(\velocity^t, \pressure^t) \in \fspace(\Omega) \times \pspace(\Omega)$ of \eqref{eq:one_way_stokes}. Moreover, there exists a constant $C(\Omega) > 0$ independent of $t$ such that
	\begin{equation}
		\label{eq:help_const_velo}
		\norm{\velocity^t}{\fspace(\Omega)} + \norm{\pressure^t}{\pspace(\Omega)} \leq C(\Omega) \norm{\velocity\subin}{H^1(\Omega)^d} \quad \text{ for all } t\in [0,\tau].
	\end{equation}
	
	For the convection-diffusion equation \eqref{eq:one_way_temperature} we note that $\velocity^t$ is sufficiently small in the $\fspace(\Omega)$ norm due to \eqref{eq:help_const_velo} and Assumption~\ref{ass:smallness}, which ensures that $\norm{\velocity\subin}{H^1(\Omega)^d}$ is sufficiently small. As a consequence of Lemma~\ref{lem:operator_analysis}, $c(t, \velocity^t, \cdot, \cdot)$ is a continuous and coercive bilinear form. Further, $f_\temperature(t, \velocity^t, \cdot)$ is a continuous linear functional (cf. Lemma~\ref{lem:analysis_linear_forms}). Therefore, the Lax-Milgram lemma (see, e.g., \cite{ern_guermond, evans, girault}) yields the existence and uniqueness of $\temperature^t \in \tspace(\Omega)$ that solves \eqref{eq:one_way_temperature}. Due to \eqref{eq:help_const_velo}, we have a constant $C(\Omega) > 0$ independent of $t$ such that
	\begin{equation}
		\label{eq:help_const_temp}
		\norm{\temperature^t}{\tspace(\Omega)} \leq C(\Omega) \left( \left( \norm{\velocity\subin}{H^1(\Omega)^d} + 1 \right) \norm{\temperature\subin}{H^1(\Omega)} + \norm{\temperature\subwall}{H^2(\R^d)} \right) \norm{\adtemp}{\tspace(\Omega)} \quad \text{ for all } t\in [0,\tau].
	\end{equation}
	
	Note, that the constants in \eqref{eq:help_const_velo} and \eqref{eq:help_const_temp} are independent of $t$ as they are rational functions of the constants of Lemmas~\ref{lem:operator_analysis} and~\ref{lem:analysis_linear_forms}, which do not dependent on $t$. Finally, adding \eqref{eq:help_const_velo} and \eqref{eq:help_const_temp} completes the proof. \qedhere

\end{proof}

\section{Shape Differentiability of the Problem}
\label{sec:shape_differentiability}

In this section we prove the shape differentiability of the reduced cost functional $\reducedcostfunction$, defined in \eqref{eq:reduced_cost_function}. To do this, we first establish the connection between the abstract differentiability result of Section~\ref{sec:sturm} and our setting. Afterwards, we verify the assumptions of Theorem~\ref{thm:sturm} and calculate the shape derivative of \eqref{eq:reduced_cost_function}. 

Throughout this section, we assume that $\Omega\in \admissiblegeom$ is a fixed reference domain, $\vectorfield \in \admissibledefo$ (cf. \eqref{eq:admissible_deformation}), and that $\flow$ is the flow associated to $\vectorfield$. Further, we assume that Assumption~\ref{ass:smallness} is valid and for $t\in [0,\tau]$, with $\tau >0$ sufficiently small, we denote by $\solution^t = (\velocity^t, \pressure^t, \temperature^t)$ the unique solution of \eqref{eq:abstract_weak_state} (cf. Lemma~\ref{lem:analysis_state_system}). Finally, note that each constant $C$ or $C(\Omega)$ appearing in this section is independent of $t$.

\subsection{Relation to the Abstract Differentiability Result of Section~\ref{sec:sturm}}
\label{sec:relation_to_abstract}

We introduce the following Lagrangian associated to \eqref{eq:opt_problem} on $\Omega$
\begin{equation*}
	\lagrangian(\Omega, \solution, \adsolution) = \costfunction(\Omega, \solution) + e(0, \solution, \adsolution).
\end{equation*}
As $\solution^0 = \solution(\Omega)$ is the unique solution of \eqref{eq:abstract_weak_state} for $t=0$, we have that $e(0, \solution^0, \adsolution) = 0$ for all $\adsolution \in \statespace(\Omega)$ and, hence, it holds that
\begin{equation*}
	\lagrangian(\Omega, \solution^0, \adsolution) = \costfunction(\Omega, \solution^0) + e(0, \solution^0, \adsolution) = \costfunction(\Omega, \solution(\Omega)) = \reducedcostfunction(\Omega) \quad \text{ for all } \adsolution \in \statespace(\Omega).
\end{equation*}
Using the speed method we define $\Omega_t = \flow(\Omega)$ and recall that $\Omega_t \in \admissiblegeom$ (cf. Section~\ref{sec:analysis_state_system}). Then, the cost functional on the transformed domain $\Omega_t$ is given by
\begin{equation*}
	\costfunction(\Omega_t, \tilde{\solution}) = \weighttemp \left( \integral{\Gamma\subwall_t} \htc \left( \temperature\subwall - (\tilde{\temperature} + \temperature\subin_t) \right) \dx{s} - \fluxdes \right)^2 + \weightvelo \integral{\Omega\subdomain_t} \abs{(\tilde{\velocity} + \velocity\subin_t) - \velocity\subdes }^2 \dx{x} + \weighttemp \integral{\Gamma_t} 1 \dx{s},
\end{equation*}
where we write $\tilde{\solution} = (\tilde{\velocity}, \tilde{\pressure}, \tilde{\temperature}) \in \statespace(\Omega_t)$ as well as $\velocity\subin_t = \velocity\subin \circ \flow^{-1}$ and $\temperature\subin_t = \temperature\subin \circ \flow^{-1}$ as in \eqref{eq:weak_state}. Now, using the change of variables $\tilde{\solution} = \solution \circ \flow^{-1}$ as well as Lemmas~\ref{lem:shape_mappings} and~\ref{lem:shape_composition} allows us to pull-back the cost functional to the reference domain $\Omega$ and we obtain
\begin{equation*}
		\costfunction(\Omega_t, \solution \circ \flow^{-1}) = \weighttemp \left( \flux(t, \temperature)  - \fluxdes \right)^2 + \weightvelo \integral{\Omega\subdomain} \abs{(\velocity + \velocity\subin) - \velocity\subdes \circ\flow}^2 \xi(t) \dx{x} + \weightreg \integral{\Gamma} \omega(t) \dx{s},
\end{equation*}
where we write $\solution = (\velocity, \pressure, \temperature)$ and define
\begin{equation*}
	\flux(t, \temperature) := \integral{\Gamma\subwall} \htc \left( \temperature\subwall \circ \flow - (\temperature + \temperature\subin) \right) \omega(t) \dx{s},
\end{equation*}
which reduces to \eqref{eq:flux_temperature} for $t=0$ due to Lemma~\ref{lem:shape_mappings}. To get a consistent notation, we write $j(t, \solution) := \costfunction(\Omega_t, \solution \circ \flow^{-1})$. We define the shape Lagrangian $\shapelagrangian \colon [0, \tau] \times \statespace(\Omega) \times \statespace(\Omega) \to \R$ corresponding to \eqref{eq:opt_problem} by
\begin{equation*}
	\shapelagrangian(t, \solution, \adsolution) = \lagrangian(\Omega_t, \solution \circ \flow^{-1}, \adsolution \circ \flow^{-1}) = j(t, \solution) + e(t, \solution, \adsolution).
\end{equation*}
Note, that this parametrization allows us to work with the fixed function space $\statespace(\Omega)$ on the reference domain $\Omega$ instead of using the varying space $\statespace(\Omega_t)$, significantly simplifying the analysis (cf. \cite{delfour_zolesio}). 
For $\solution^t$, i.e., the solution of the state system \eqref{eq:abstract_weak_state}, we have that
\begin{equation*}
	e(t,\solution^t,\adsolution) = 0 \quad \text{ for all } \adsolution \in \statespace(\Omega).
\end{equation*}
Moreover, we define the function $\solution_t := \solution^t \circ \flow^{-1}$. As $\flow$ is a diffeomorphism, we can read the calculations at the beginning of Section~\ref{sec:reformulation} backwards and observe that $\solution_t = \solution(\Omega_t)$, i.e., $\solution_t$ is the solution of the state system \eqref{eq:weak_state}. This yields that
\begin{equation}
	\label{eq:reduced_saddle}
	\shapelagrangian(t, \solution^t, \adsolution) = j(t, \solution^t) + e(t, \solution^t, \adsolution) = \costfunction(\Omega_t, \solution_t) = \costfunction(\Omega_t, \solution(\Omega_t)) = \reducedcostfunction(\Omega_t) \quad \text{ for all } \adsolution \in \statespace(\Omega).
\end{equation}
Using \eqref{eq:reduced_saddle} we can now calculate the shape derivative in the following way
\begin{equation}
	\label{eq:relation}
	d\reducedcostfunction(\Omega)[\vectorfield] = \lim\limits_{t\searrow 0} \frac{\reducedcostfunction(\Omega_t) - \reducedcostfunction(\Omega)}{t} = \left. \frac{d}{d t} \reducedcostfunction(\Omega_t) \right\rvert_{t=0^+} = \left. \frac{d}{d t} \shapelagrangian(t, \solution^t, \adsolution) \right\rvert_{t=0^+} \quad \text{ for all } \adsolution \in \statespace(\Omega).
\end{equation}
To calculate the final derivative in the above equation, we apply Theorem~\ref{thm:sturm} to the shape Lagrangian $\shapelagrangian$. However, we first have to verify its assumptions, which we do in the following.

\subsection{Verification of the Conditions of Theorem~\ref{thm:sturm}}
\label{sec:verification}

We set $E = F = \statespace(\Omega)$ and use $G = \shapelagrangian$, where the latter is affine in the $\adsolution$ component by construction.

\subsection*{Verification of Assumption \ref{ass:H0}}

It is easy to see that $\partial_\adsolution \shapelagrangian(t, \solution^t, 0)[\testsolution] = e(t, \solution^t, \testsolution)$ for all $\testsolution \in \statespace(\Omega)$. Therefore, we see that the set $E(t)$ can be rewritten equivalently as
\begin{equation*}
	E(t) = \Set{\solution \in \statespace(\Omega) | e(t, \solution, \testsolution) = 0 \text{ for all } \testsolution \in \statespace(\Omega)},
\end{equation*}
i.e., $E(t)$ is the solution set of the state system \eqref{eq:abstract_weak_state}. Thanks to Lemma~\ref{lem:analysis_state_system} we know that \eqref{eq:abstract_weak_state} has a unique solution $\solution^t \in \statespace(\Omega)$ and, thus, $E(t) = \set{\solution^t}$ is single valued, showing part \ref{ass:H0i} of Assumption~\ref{ass:H0}. 

Furthermore, we note that the (pulled-back) cost functional $j(t,\solution)$ is Fr\'echet differentiable in $\solution$ as it consists of linear, continuous mappings in $\solution$ and compositions of these with continuously Fr\'echet differentiable functions. Additionally, the state system $e(t, \solution, \adsolution)$ is multilinear and continuous in $\solution$ (cf. Lemma~\ref{lem:operator_analysis}) and, hence, also Fr\'echet differentiable (see, e.g., \cite{troeltzsch, hinze_pinnau_ulbrich}). From this, we obtain that $\shapelagrangian$ is Fr\'echet differentiable w.r.t. $\solution$. As the mapping
\begin{equation*}
	[0,1] \to \statespace(\Omega); \quad \theta \mapsto \theta \solution^t + (1- \theta) \solution^0
\end{equation*}
is obviously Fr\'echet differentiable, we get by the chain rule (see, e.g., \cite{hinze_pinnau_ulbrich, troeltzsch}) that
\begin{equation*}
	[0, 1] \to \R; \quad \theta \mapsto \shapelagrangian\left(t, \theta \solution^t + (1-\theta)\solution^0, \hat{\adsolution}\right)
\end{equation*}
is also Fr\'echet differentiable, which implies part \ref{ass:H0ii} of Assumption~\ref{ass:H0}. 

Finally, it is easy to see that the mapping
\begin{equation*}
	[0, 1] \to \R; \quad \theta \mapsto \partial_\solution \shapelagrangian\left(t, \theta \solution^t + (1-\theta)\solution^0, \adsolution\right)[\testadsolution]
\end{equation*}
is indeed continuous and, thus, in $L^1(0,1)$, completing the verification of Assumption~\ref{ass:H0}.

\subsection*{Verification of Assumption \ref{ass:H1}}

Due to Lemmas~\ref{lem:shape_mappings} and~\ref{lem:shape_composition} we know that all terms of $\shapelagrangian$ involving $t$ are continuously differentiable w.r.t. $t$. A straightforward calculation shows that we have the following
\begin{equation}
	\label{eq:partial_time}
	\begin{aligned}
		&\partial_t \shapelagrangian(t, \solution^0, \adsolution) \\
		=\ &2\weighttemp\ \left( \flux(t, \temperature^0) - \fluxdes \right)\ \integral{\Gamma\subwall} \htc \left( \temperature\subwall \circ \flow - (\temperature^0 + \temperature\subin) \right) \omega'(t) + \htc \left(\left(\grad \temperature\subwall \cdot \vectorfield\right) \circ \flow\right) \omega(t) \dx{s} \\
		&+ \weightvelo \integral{\Omega\subdomain} \abs{(\velocity^0 + \velocity\subin) - \velocity\subdes \circ \flow}^2 \xi'(t) -2 \left( (\velocity^0 + \velocity\subin) - \velocity\subdes \circ \flow \right) \cdot \left( \left( D\velocity\subdes\ \vectorfield \right) \circ \flow\right) \xi(t) \dx{x} \\
		&+ \weightreg \integral{\Gamma} \omega'(t) \dx{s} + \integral{\Omega} \viscosity \left(D(\velocity^0 + \velocity\subin) A'(t) \right) : D\advelo  - \pressure^0\ \tr\left( D\advelo B'(t)\transposed \right) \dx{x} \\
		&+ \integral{\Omega} -\adpres\ \tr\left( D(\velocity^0 + \velocity\subin) B'(t)\transposed \right) + \conductivity \left( A'(t) \grad (\temperature^0 + \temperature\subin ) \right) \cdot \grad \adtemp \dx{x} \\
		&+ \integral{\Omega} \density \hcapacity \left(\velocity^0 + \velocity\subin\right) \cdot \left( B'(t) \grad (\temperature^0 + \temperature\subin) \right) \ \adtemp \dx{x} \\
		&+ \integral{\Gamma\subwall} \htc \left((\temperature^0 + \temperature\subin) - \temperature\subwall\circ \flow\right) \adtemp\ \omega'(t) - \htc \left(\left(\grad \temperature\subwall \cdot \vectorfield\right) \circ \flow\right) \adtemp \omega(t) \dx{s},
	\end{aligned}
\end{equation}
where $\adsolution = (\advelo, \adpres, \adtemp)$. Due to the H\"older inequality and the fact that $\temperature\subwall \in H^2(\R^d)$ (cf. Section~\ref{sec:problem_formulation}) this is well-defined. Hence, we have verified Assumption~\ref{ass:H1}.

\subsection*{Verification of Assumption \ref{ass:H2} and Adjoint System}

For this, we prove that the averaged adjoint system
\begin{equation}
	\label{eq:sturm_averaged_adjoint}
	\text{Find } \adsolution^t \in \statespace(\Omega) \text{ such that } \qquad \int_{0}^{1} \partial_\solution \shapelagrangian\left(t, \theta \solution^t + (1-\theta)\solution^0, \adsolution^t\right)[\testadsolution] \dx{\theta} = 0 \qquad \text{ for all } \testadsolution \in \statespace(\Omega)
\end{equation}
has a unique solution. Carrying out the integration over $\theta$ yields the following equivalent system
\begin{equation*}
	\left\lbrace \quad
	\begin{aligned}
		&\text{Find } \adsolution^t = (\advelo^t, \adpres^t, \adtemp^t) \in \statespace(\Omega) \text{ such that } \\
		&\quad \integral{\Omega} \conductivity \left(A(t) \grad \testadtemp \right) \cdot \grad \adtemp^t + \density\hcapacity \left( \nicefrac{1}{2} \left( \velocity^t + \velocity^0 \right) + \velocity\subin \right) \cdot \left( B(t) \grad \testadtemp \right) \adtemp^t \dx{x} + \integral{\Gamma\subwall} \htc\ \testadtemp \adtemp^t\ \omega(t) \dx{x} \\
		&\quad\ \ + \integral{\Omega} \viscosity \left( D\testadvelo\ A(t) \right) : D\advelo^t - \testadpres\ \tr\left( D\advelo^t\ B(t)\transposed \right) - \adpres^t\ \tr\left( D\testadvelo\ B(t)\transposed \right) \dx{x} \\
		&\quad\ \ + \integral{\Omega} \density \hcapacity \testadvelo \cdot \left( B(t) \grad \left( \nicefrac{1}{2}(\temperature^t + \temperature^0) + \temperature\subin \right) \right) \adtemp^t \dx{x} \\
		&\quad = 2\weighttemp \left( \flux(t, \nicefrac{1}{2}(\temperature^t + \temperature^0)) - \fluxdes \right) \integral{\Gamma\subwall} \htc\ \testadtemp\ \omega(t) \dx{s} \\
		&\quad\ \ - 2\weightvelo \integral{\Omega\subdomain} \left( \nicefrac{1}{2} (\velocity^t + \velocity^0) + \velocity\subin - \velocity\subdes\circ \flow \right) \cdot \testadvelo\ \xi(t) \dx{x} \\
		&\text{for all } \testadsolution = (\testadvelo, \testadpres, \testadtemp) \in \statespace(\Omega).
	\end{aligned}
	\right.
\end{equation*}
In the following, we reformulate this as an abstract problem and prove its well-posedness in analogy to Section~\ref{sec:analysis_state_system}. For this, we introduce the mappings
\begin{equation}
	\label{eq:averaged_linear_forms}
	\begin{aligned}
		&g_\temperature \colon [0, \tau] \times \tspace(\Omega) \to \R; \quad (t, \testadtemp) \mapsto g_\temperature(t, \testadtemp) = 2 \weighttemp \left( \flux(t, \nicefrac{1}{2}(\temperature^t + \temperature^0)) - \fluxdes \right) \integral{\Gamma\subwall} \hspace{-1em} \htc\ \testadtemp\ \omega(t) \dx{s}, \\
		&g_\velocity \colon [0, \tau] \times \tspace(\Omega) \times \fspace(\Omega) \to \R; \quad (t, \adtemp, \testadvelo) \\
		&\quad \mapsto -2\weightvelo \integral{\Omega\subdomain} \left(\nicefrac{1}{2} ( \velocity^t + \velocity^0 ) + \velocity\subin - \velocity\subdes \right) \cdot \testadvelo\ \xi(t) \dx{x} - \integral{\Omega} \density\hcapacity\ \testadvelo \cdot \left( B(t) \grad (\nicefrac{1}{2}(\temperature^t + \temperature^0) + \temperature\subin) \right) \adtemp \dx{x},
		%
	\end{aligned}
\end{equation}
where $\solution^t = (\velocity^t, \pressure^t, \temperature^t)$ solves \eqref{eq:abstract_weak_state}. We define the mapping $e^* \colon [0,\tau] \times \statespace(\Omega) \times \statespace(\Omega) \to \R$ by
\begin{equation*}
	e^*(t, \solution, \adsolution) := a(t, \testadvelo, \advelo) + b(t, \advelo, \testadpres) + b(t, \testadvelo, \adpres) + c\left(t, \nicefrac{1}{2} (\velocity^t + \velocity^0), \testadtemp, \adtemp\right) - g_\velocity(t, \adtemp, \testadvelo) - g_\temperature(t, \testadtemp),
\end{equation*}
where $\solution = (\testadvelo, \testadpres, \testadtemp)$ and $\adsolution = (\advelo, \adpres, \adtemp)$. Using the definitions of the involved mappings (cf. \eqref{eq:def_bilinear_forms}, \eqref{eq:def_linear_forms_state}, and \eqref{eq:averaged_linear_forms}), we see that \eqref{eq:sturm_averaged_adjoint} is equivalent to the following problem
\begin{equation}
	\label{eq:abstract_averaged_adjoint}
	\text{Find } \adsolution^t \in \statespace(\Omega) \text{ such that } \qquad e^*(t, \testadsolution, \adsolution^t) = 0 \qquad \text{ for all } \testadsolution \in \statespace(\Omega).
\end{equation}

To prove the well-posedness of problem \eqref{eq:abstract_averaged_adjoint}, we proceed similarly to Section~\ref{sec:analysis_state_system} and first investigate the boundedness of the right-hand sides.
\begin{Lemma}
	\label{lem:averaged_linear_forms}
	Let $\tau > 0$ be sufficiently small, $t\in [0,\tau]$, and $\adtemp \in \tspace(\Omega)$. Then, for all $t\in [0,\tau]$ the mappings $g_\temperature(t, \cdot)$ and $g_\velocity(t, \adtemp, \cdot)$ are linear and continuous. In particular, there exists a constant $C(\Omega) > 0$ independent of $t$ such that
	\begin{equation*}
		\begin{aligned}
			&\abs{g_\temperature(t, \testadtemp)} \leq C(\Omega) \norm{\testadtemp}{\tspace(\Omega)} \quad \text{ for all } \testadtemp \in \tspace(\Omega), \\
			&\abs{g_\velocity(t, \adtemp, \testadvelo)} \leq C(\Omega) \left( 1 + \norm{\adtemp}{\tspace(\Omega )} \right) \norm{\testadvelo}{\fspace(\Omega)} \quad \text{ for all } \adtemp\in \tspace(\Omega) \text{ and all } \testadvelo \in \fspace(\Omega).
		\end{aligned}
	\end{equation*}
\end{Lemma}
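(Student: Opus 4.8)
The plan is to mirror the proofs of Lemmas~\ref{lem:analysis_linear_forms} and~\ref{lem:operator_analysis}: linearity of $g_\temperature(t,\cdot)$ in $\testadtemp$ and of $g_\velocity(t,\adtemp,\cdot)$ in $\testadvelo$ is immediate from the definitions in~\eqref{eq:averaged_linear_forms}, so only the continuity estimates need to be established. The three ingredients are: the $t$-uniform bounds $\sup_{t\in[0,\tau]}\norm{\xi(t)}{L^\infty(\Omega)}$, $\sup_{t\in[0,\tau]}\norm{\omega(t)}{L^\infty(\Omega)}$ and $\sup_{t\in[0,\tau]}\norm{B(t)}{L^\infty(\Omega)^{d\times d}}$, which are finite and independent of $t$ by Lemma~\ref{lem:shape_mappings} (continuity on the compact interval $[0,\tau]$); the energy estimate~\eqref{eq:energy_estimates} of Lemma~\ref{lem:analysis_state_system}, which yields a constant $C(\Omega)$, independent of $t$, with $\norm{\velocity^t}{\fspace(\Omega)}+\norm{\temperature^t}{\tspace(\Omega)}\le C(\Omega)$, and likewise for $\solution^0$; and the continuous embedding $H^1(\Omega)\embedding L^4(\Omega)$ together with the trace theorem $H^1(\Omega)\to L^2(\Gamma\subwall)$.

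For $g_\temperature$, I would first note that $\flux(t,\nicefrac{1}{2}(\temperature^t+\temperature^0))$ is a real number and bound its modulus uniformly in $t$: by the trace theorem applied to $\temperature^t,\temperature^0,\temperature\subin$, the estimate~\eqref{eq:energy_estimates}, the bound $\sup_t\norm{\omega(t)}{L^\infty(\Omega)}$, and the fact that $\norm{\temperature\subwall\circ\flow}{H^1(\R^d)}$ stays bounded as $t\searrow 0$ by Lemma~\ref{lem:shape_composition}~(3) (recall $\temperature\subwall\in H^2(\R^d)$), one gets $\abs{\flux(t,\nicefrac{1}{2}(\temperature^t+\temperature^0))-\fluxdes}\le C(\Omega)$ with $C(\Omega)$ independent of $t$. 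The remaining factor $\integral{\Gamma\subwall}\htc\,\testadtemp\,\omega(t)\dx{s}$ is then estimated by $C(\Omega)\,\sup_t\norm{\omega(t)}{L^\infty(\Omega)}\,\norm{\testadtemp}{L^2(\Gamma\subwall)}\le C(\Omega)\norm{\testadtemp}{\tspace(\Omega)}$ via Hölder and the trace theorem, giving the first claimed bound.

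For $g_\velocity$, I would treat the two summands of~\eqref{eq:averaged_linear_forms} separately. For the first, since $\Omega\subdomain\subset\Omega$, Hölder's inequality on $\Omega\subdomain$ together with $\sup_t\norm{\xi(t)}{L^\infty(\Omega)}$, the bounds $\norm{\velocity^t}{\fspace(\Omega)},\norm{\velocity^0}{\fspace(\Omega)}\le C(\Omega)$ from~\eqref{eq:energy_estimates}, $\velocity\subin\in H^1(\Omega)^d$, and the fact that $\velocity\subdes$ (respectively $\velocity\subdes\circ\flow$, whose $L^2(\Omega\subdomain)$ norm is bounded uniformly in $t$ by Lemma~\ref{lem:shape_composition}) is a fixed datum, yields a bound $C(\Omega)\norm{\testadvelo}{\fspace(\Omega)}$. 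For the second summand I would argue exactly as in part~(3) of Lemma~\ref{lem:operator_analysis}: using $H^1(\Omega)\embedding L^4(\Omega)$ to put $\testadvelo$ and $\adtemp$ into $L^4(\Omega)$, keeping the $L^2$-gradient of $\nicefrac{1}{2}(\temperature^t+\temperature^0)+\temperature\subin$ (bounded by~\eqref{eq:energy_estimates} and $\temperature\subin\in H^1(\Omega)$), and using $\sup_t\norm{B(t)}{L^\infty(\Omega)^{d\times d}}$, one obtains a bound $C(\Omega)\norm{\adtemp}{\tspace(\Omega)}\norm{\testadvelo}{\fspace(\Omega)}$. Adding the two contributions gives $\abs{g_\velocity(t,\adtemp,\testadvelo)}\le C(\Omega)(1+\norm{\adtemp}{\tspace(\Omega)})\norm{\testadvelo}{\fspace(\Omega)}$.

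The only real point of care — the \textbf{main obstacle}, such as it is — is the $t$-uniformity of all constants: every appearance of $\velocity^t$ or $\temperature^t$ must be routed through the energy estimate~\eqref{eq:energy_estimates}, whose constant is $t$-independent, and every geometric quantity $\xi(t),\omega(t),B(t)$ (as well as $\norm{\temperature\subwall\circ\flow}{H^1(\R^d)}$) must be controlled by its maximum over $[0,\tau]$, which is finite and $t$-independent by Lemmas~\ref{lem:shape_mappings} and~\ref{lem:shape_composition}. Once these uniform bounds are in place, the remaining manipulations are routine Hölder/Sobolev estimates.
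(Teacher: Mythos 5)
Your proposal is correct and follows essentially the same route as the paper: the paper's proof is a three-line reference stating that linearity is immediate, continuity follows from H\"older's inequality and Lemma~\ref{lem:shape_mappings} analogously to Lemma~\ref{lem:operator_analysis}, and the $t$-independence of the constants comes from the energy estimates \eqref{eq:energy_estimates}. You have simply spelled out the details (trace theorem for the $\flux$ factor, the $H^1(\Omega)\embedding L^4(\Omega)$ embedding for the convective term, and the uniform bounds on $\xi(t)$, $\omega(t)$, $B(t)$) that the paper leaves implicit.
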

\begin{proof}
	The linearity of the mappings is readily seen. Their continuity follows from an application of H\"older's inequality and Lemma~\ref{lem:shape_mappings} analogously to the proof of Lemma~\ref{lem:operator_analysis}. The fact that the constants are independent of $t$ follows from the energy estimates of Lemma~\ref{lem:analysis_state_system} (cf. \eqref{eq:energy_estimates}).
\end{proof}

Now, we have the following result concerning the solvability of \eqref{eq:abstract_averaged_adjoint}.
\begin{Lemma}
	\label{lem:analysis_averaged_adjoint}
	Let $\tau > 0$ be sufficiently small. Then, for every $t \in [0, \tau]$ the averaged adjoint system \eqref{eq:abstract_averaged_adjoint} has a unique solution $\adsolution^t = (\advelo^t, \adpres^t, \adtemp^t) \in \statespace(\Omega)$ and there exists a constant $C(\Omega) > 0$ which is independent of $t$ such that
	\begin{equation*}
		\norm{\adsolution^t}{\statespace(\Omega)} = \norm{\advelo^t}{\fspace(\Omega)} + \norm{\adpres^t}{\pspace(\Omega)} + \norm{\adtemp^t}{\pspace(\Omega)} \leq C(\Omega) \quad \text{ for all } t\in [0,\tau].
	\end{equation*}
\end{Lemma}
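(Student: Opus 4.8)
The plan is to imitate the proof of Lemma~\ref{lem:analysis_state_system}, exploiting that the averaged adjoint system~\eqref{eq:abstract_averaged_adjoint} is, just like the state system, a one-way coupled system — only with the direction of the coupling reversed. Testing~\eqref{eq:abstract_averaged_adjoint} with $\testadsolution = (0, 0, \testadtemp)$ shows that the temperature adjoint $\adtemp^t$ solves, on its own,
\begin{equation*}
	c\!\left(t, \nicefrac{1}{2}(\velocity^t + \velocity^0), \testadtemp, \adtemp^t\right) = g_\temperature(t, \testadtemp) \qquad \text{for all } \testadtemp \in \tspace(\Omega),
\end{equation*}
while testing with $\testadsolution = (\testadvelo, \testadpres, 0)$ shows that $(\advelo^t, \adpres^t)$ solves the Stokes-type saddle point problem
\begin{equation*}
	\left\lbrace\quad
	\begin{alignedat}{2}
		a(t, \testadvelo, \advelo^t) + b(t, \testadvelo, \adpres^t) &= g_\velocity(t, \adtemp^t, \testadvelo) \quad &&\text{for all } \testadvelo \in \fspace(\Omega), \\
		b(t, \advelo^t, \testadpres) &= 0 \quad &&\text{for all } \testadpres \in \pspace(\Omega),
	\end{alignedat}
	\right.
\end{equation*}
in which $\adtemp^t$ enters only through the right-hand side. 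Hence I would first solve for $\adtemp^t$ and only then for $(\advelo^t, \adpres^t)$; combining the two steps (using linearity and the block test functions above) yields a solution of the full system~\eqref{eq:abstract_averaged_adjoint}, and uniqueness is inherited from the two sub-problems.

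For the temperature adjoint equation I would argue as follows. By Lemma~\ref{lem:analysis_state_system} together with Assumption~\ref{ass:smallness}, the function $\nicefrac{1}{2}(\velocity^t + \velocity^0)$ is small in $\fspace(\Omega)$, uniformly in $t \in [0,\tau]$, so part~(3) of Lemma~\ref{lem:operator_analysis} applies and $c(t, \nicefrac{1}{2}(\velocity^t + \velocity^0), \cdot, \cdot)$ is a continuous and coercive bilinear form with constants independent of $t$. Since $g_\temperature(t, \cdot)$ is a continuous linear functional with a $t$-independent bound (Lemma~\ref{lem:averaged_linear_forms}), the Lax-Milgram lemma yields a unique $\adtemp^t \in \tspace(\Omega)$ with $\norm{\adtemp^t}{\tspace(\Omega)} \leq C(\Omega)$, where $C(\Omega)$ does not depend on $t$.

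For the Stokes adjoint problem I would use that, by the symmetry of $a(t, \cdot, \cdot)$ established in part~(1) of Lemma~\ref{lem:operator_analysis}, the bilinear form $(\testadvelo, \advelo) \mapsto a(t, \testadvelo, \advelo)$ is again continuous and coercive, that $b(t, \cdot, \cdot)$ is continuous and satisfies the LBB condition~\eqref{eq:lbb} by part~(2) of the same lemma, and that all these constants are independent of $t$. The right-hand side $g_\velocity(t, \adtemp^t, \cdot)$ is a continuous linear functional whose norm is, by Lemma~\ref{lem:averaged_linear_forms} and the bound on $\norm{\adtemp^t}{\tspace(\Omega)}$ just obtained, bounded by a $t$-independent constant. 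The Lions-Lax-Milgram theorem — exactly as invoked in the proof of Lemma~\ref{lem:analysis_state_system} — then gives a unique $(\advelo^t, \adpres^t) \in \fspace(\Omega) \times \pspace(\Omega)$ satisfying $\norm{\advelo^t}{\fspace(\Omega)} + \norm{\adpres^t}{\pspace(\Omega)} \leq C(\Omega)$ with $C(\Omega)$ independent of $t$. Adding the two estimates yields the claimed bound.

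The only genuinely delicate point — and the main obstacle — is the $t$-independence of all of these constants. This rests on two ingredients that are already available: the uniform-in-$t$ continuity and coercivity/LBB constants of Lemmas~\ref{lem:operator_analysis} and~\ref{lem:averaged_linear_forms} (which themselves follow from the continuity of $t \mapsto \norm{A(t)}{L^\infty(\Omega)^{d\times d}}$, $t\mapsto\norm{B(t)}{L^\infty(\Omega)^{d\times d}}$, and $t\mapsto\norm{\omega(t)}{L^\infty(\Omega)}$ on the compact interval $[0,\tau]$, cf. Lemma~\ref{lem:shape_mappings}), and the $t$-independent energy estimate~\eqref{eq:energy_estimates} of Lemma~\ref{lem:analysis_state_system}, which is needed both to guarantee the coercivity of $c(t, \nicefrac{1}{2}(\velocity^t + \velocity^0), \cdot, \cdot)$ and to produce the bound on $\adtemp^t$ that in turn controls the data of the Stokes adjoint problem. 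Once the bookkeeping of these dependencies is done carefully, the proof is a routine two-step application of the abstract saddle-point and Lax-Milgram theory.
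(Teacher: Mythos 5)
Your proposal is correct and follows essentially the same route as the paper's own proof: decompose \eqref{eq:abstract_averaged_adjoint} into the reversed one-way coupled system \eqref{eq:one_way_adjoint_cd}--\eqref{eq:one_way_adjoint_stokes}, apply Lax--Milgram to the adjoint convection--diffusion equation using Assumption~\ref{ass:smallness} and the uniform bound \eqref{eq:help_const_velo} on $\velocity^t+\velocity^0$, then apply the Lions--Lax--Milgram theorem to the adjoint Stokes system, tracking the $t$-independence of all constants via Lemmas~\ref{lem:operator_analysis}, \ref{lem:analysis_state_system}, and~\ref{lem:averaged_linear_forms}. The only (harmless) addition is your explicit remark on the symmetry of $a(t,\cdot,\cdot)$, which the paper leaves implicit.
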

\begin{proof}
	As can be seen from the definition of $e^*$, the averaged adjoint system is structurally very similar to the state system (cf. \eqref{eq:abstract_weak_state}). However, the order of the one-way coupling in the system is now reversed. In particular, problem \eqref{eq:abstract_averaged_adjoint} is equivalent to the following one-way coupled problem
	\begin{equation}
		\label{eq:one_way_adjoint_cd}
		\text{Find } \adtemp^t \in \tspace(\Omega) \text{ such that } \qquad c\left( t, \nicefrac{1}{2} (\velocity^t + \velocity^0), \testadtemp, \adtemp^t \right) = g_\temperature(t, \testadtemp) \qquad \text{ for all } \testadtemp \in \tspace(\Omega),
	\end{equation}
	as well as
	\begin{equation}
		\label{eq:one_way_adjoint_stokes}
		\left\lbrace\quad
		\begin{aligned}
			&\text{Find } (\advelo^t, \adpres^t) \in \fspace(\Omega) \times \pspace(\Omega) \text{ such that } \\
			&\qquad
			\begin{alignedat}{2}
				a(t, \testadvelo, \advelo^t) + b(t, \testadvelo, \adpres^t) &= g_\velocity(t, \adtemp^t, \testadvelo) \quad &&\text{ for } \adtemp^t \text{ solving } \eqref{eq:one_way_adjoint_cd} \text{ and all } \testadvelo \in \fspace(\Omega), \\
				b(t, \advelo^t, \testadpres) &= 0 \quad &&\text{ for all } \testadpres \in \pspace(\Omega).
			\end{alignedat}
		\end{aligned}
		\right.
	\end{equation}
	
	We want to apply the Lax-Milgram lemma to the adjoint convection-diffusion equation \eqref{eq:one_way_adjoint_cd}. Thanks to the proof of Lemma~\ref{lem:analysis_state_system}, we know from \eqref{eq:help_const_velo} that 
	\begin{equation*}
		\norm{\velocity^t + \velocity^0}{\fspace(\Omega)} \leq C(\Omega) \norm{\velocity\subin}{H^1(\Omega)^d}.
	\end{equation*}
	Thanks to Assumption~\ref{ass:smallness} we can use the results from Lemma~\ref{lem:operator_analysis} for $w = \nicefrac{1}{2}(\velocity^t + \velocity^0)$ and observe that $c\left(t, \nicefrac{1}{2}(\velocity^t + \velocity^0), \cdot, \cdot \right)$ is bilinear, continuous, and coercive.
As the right-hand side of \eqref{eq:one_way_adjoint_cd} is a continuous linear functional (cf. Lemma~\ref{lem:averaged_linear_forms}), we can apply the Lax-Milgram lemma to \eqref{eq:one_way_adjoint_cd} and get a unique solution $\adtemp^t \in \tspace(\Omega)$ together with a constant $C(\Omega) > 0$ such that
	\begin{equation}
		\label{eq:energy_adtemp}
		\norm{\adtemp^t}{\tspace(\Omega)} \leq C(\Omega) \quad \text{ for all } t\in [0,\tau].
	\end{equation}
	Note, that the constant is independent of $t$ due to Lemmas~\ref{lem:analysis_state_system} and~\ref{lem:averaged_linear_forms}.
	
	Thanks to Lemmas~\ref{lem:operator_analysis} and~\ref{lem:averaged_linear_forms} we apply, as in the proof of Lemma~\ref{lem:analysis_state_system}, the Lions-Lax-Milgram theorem to the adjoint Stokes system \eqref{eq:one_way_adjoint_stokes}. This yields a unique solution $(\advelo^t, \adpres^t) \in \fspace(\Omega) \times \pspace(\Omega)$ as well as a constant $C(\Omega) > 0$ such that
	\begin{equation*}
		\norm{\advelo^t}{\fspace(\Omega)} + \norm{\adpres^t}{\pspace(\Omega)} \leq C(\Omega) \quad \text{ for all } t\in [0,\tau].
	\end{equation*}
	Again, the constant is independent of $t$ due to Lemmas~\ref{lem:analysis_state_system} and~\ref{lem:averaged_linear_forms} and \eqref{eq:energy_adtemp}, completing the proof. \qedhere

\end{proof}

\begin{Remark}
	For $t=0$ we obtain the usual adjoint system
	\begin{equation}
		\label{eq:weak_adjoint}
		\left\lbrace\quad 
		\begin{aligned}
			&\text{Find } \adsolution^0 = (\advelo^0, \adpres^0, \adtemp^0) \in \statespace(\Omega) \text{ such that } \\
			&\qquad \integral{\Omega} \conductivity \grad \testadtemp \cdot \grad \adtemp^0 + \density \hcapacity \left(\velocity^0 + \velocity\subin\right) \cdot \grad \testadtemp\ \adtemp^0 \dx{x} + \integral{\Gamma\subwall} \htc\ \testadtemp \adtemp^0 \dx{s} \\
			&\qquad\ \ + \integral{\Omega} \viscosity\ D \testadvelo : D \advelo^0 - \testadpres\ \divergence{\advelo^0} - \adpres^0\ \divergence{\testadvelo} + \density \hcapacity\ \testadvelo \cdot \grad (\temperature^0 + \temperature\subin) \adtemp^0 \dx{x} \\
			&\quad =  2 \weighttemp \left( \flux(0, \temperature^0) - \fluxdes \right) \integral{\Gamma\subwall} \htc\ \testadtemp \dx{s} -2\weightvelo \integral{\Omega\subdomain} \left( (\velocity^0 + \velocity\subin) - \velocity\subdes \right) \cdot \testadvelo \dx{x}\\
			&\text{for all } \testadsolution = (\testadvelo, \testadpres, \testadtemp) \in \statespace(\Omega),
		\end{aligned}
		\right.
	\end{equation}
	where $\solution^0 = (\velocity^0, \pressure^0, \temperature^0)$ is the solution of the state system \eqref{eq:weak_state_reference}.
\end{Remark}
Hence, we have verified Assumption~\ref{ass:H2}.

\subsection*{Verification of Assumption~\ref{ass:H3}}

As noted in \cite{sturm}, we can verify Assumption~\ref{ass:H3} by proving the weak convergence of $\adsolution^t \rightharpoonup \adsolution^0$ in $\statespace(\Omega)$ and the weak continuity of the mapping $(t, \adsolution) \mapsto \partial_t \shapelagrangian(t, \solution^0, \adsolution)$. For this, we need the following result, whose proof is similar to the ones given in, e.g., \cite{lindemann, hohmann}.
\begin{Lemma}
	\label{lem:shape_continuity}
	Let $\solution^t \in \statespace(\Omega)$ and $\solution^0 \in \statespace(\Omega)$ be the solution of $\eqref{eq:abstract_weak_state}$ for $t\in [0,\tau]$ and $t=0$, respectively. Then, we have that
	\begin{equation}
		\label{eq:convergence_state}
		\lim\limits_{t \searrow 0} \norm{\solution^t - \solution^0}{\statespace(\Omega)} = 0.
	\end{equation}
\end{Lemma}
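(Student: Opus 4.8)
The plan is to argue by a direct perturbation estimate that exploits the one-way coupled structure of the state system, i.e. the split \eqref{eq:one_way_stokes}--\eqref{eq:one_way_temperature}, together with the fact that every stability constant in Lemma~\ref{lem:operator_analysis} and Lemma~\ref{lem:analysis_state_system} is independent of $t$. The only geometric input needed is that $A(t) \to I$, $B(t) \to I$, $\omega(t) \to 1$, $\xi(t) \to 1$ in $L^\infty(\Omega)$ as $t\searrow 0$ (by Lemma~\ref{lem:shape_mappings}, since these maps are continuous in $t$ and take the stated values at $t=0$) and that $\temperature\subwall \circ \flow \to \temperature\subwall$ in $H^1(\R^d)$, hence $\temperature\subwall \circ \flow \to \temperature\subwall$ in $L^2(\Gamma\subwall)$ by continuity of the trace operator (Lemma~\ref{lem:shape_composition}, using $\temperature\subwall \in H^2(\R^d) \subset H^1(\R^d)$). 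Throughout, the energy estimate \eqref{eq:energy_estimates} gives a $t$-uniform bound $\norm{\solution^t}{\statespace(\Omega)} \leq C(\Omega)$, and \eqref{eq:help_const_velo} together with Assumption~\ref{ass:smallness} keeps $\norm{\velocity^t}{\fspace(\Omega)}$ uniformly small.

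\emph{Step 1 (Stokes part).} Subtracting the two equations of \eqref{eq:one_way_stokes} written for the parameter $t$ and for $0$ and using bilinearity, the pair $(\velocity^t - \velocity^0, \pressure^t - \pressure^0) \in \fspace(\Omega) \times \pspace(\Omega)$ solves a Stokes-type system with the forms $a(t,\cdot,\cdot)$, $b(t,\cdot,\cdot)$ and right-hand sides
\[
	R_1^t(\testvelo) = \bigl(f_\velocity(t,\testvelo) - f_\velocity(0,\testvelo)\bigr) - \bigl(a(t,\velocity^0,\testvelo) - a(0,\velocity^0,\testvelo)\bigr) - \bigl(b(t,\testvelo,\pressure^0) - b(0,\testvelo,\pressure^0)\bigr),
\]
and, analogously, $R_2^t(\testpres) = \bigl(f_\pressure(t,\testpres) - f_\pressure(0,\testpres)\bigr) - \bigl(b(t,\velocity^0,\testpres) - b(0,\velocity^0,\testpres)\bigr)$. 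Each difference in these expressions involves exactly one of the factors $A(t) - I$ or $B(t)\transposed - I$, so by Hölder's inequality, the boundedness of $\velocity^0,\pressure^0,\velocity\subin$ and the $L^\infty$-convergences above, one gets $\norm{R_1^t}{\fspace(\Omega)'} + \norm{R_2^t}{\pspace(\Omega)'} \to 0$. Since $a(t,\cdot,\cdot)$ is uniformly coercive and $b(t,\cdot,\cdot)$ satisfies the LBB condition uniformly in $t$ (Lemma~\ref{lem:operator_analysis}), the Lions--Lax--Milgram stability estimate with a $t$-independent constant yields $\norm{\velocity^t - \velocity^0}{\fspace(\Omega)} + \norm{\pressure^t - \pressure^0}{\pspace(\Omega)} \leq C(\Omega)\bigl(\norm{R_1^t}{\fspace(\Omega)'} + \norm{R_2^t}{\pspace(\Omega)'}\bigr) \to 0$.

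\emph{Step 2 (temperature part).} By linearity of $c(t,\velocity^t,\cdot,\cdot)$ in its third argument, $\temperature^t - \temperature^0 \in \tspace(\Omega)$ solves $c(t,\velocity^t,\temperature^t - \temperature^0,\testtemp) = R^t(\testtemp)$ for all $\testtemp \in \tspace(\Omega)$, where $R^t(\testtemp) = f_\temperature(t,\velocity^t,\testtemp) - c(t,\velocity^t,\temperature^0,\testtemp)$. Since $\temperature^0$ solves \eqref{eq:one_way_temperature} at $t=0$ with velocity $\velocity^0$, we have $f_\temperature(0,\velocity^0,\testtemp) - c(0,\velocity^0,\temperature^0,\testtemp) = 0$, so $R^t$ is a difference of two quantities that vanishes at $t=0$. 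Splitting off, in turn, the factors $A(t) - I$, $B(t) - I$, $\omega(t) - 1$, the velocity increment $\velocity^t - \velocity^0$ (controlled by Step~1) and $\temperature\subwall \circ \flow - \temperature\subwall$ (controlled in $L^2(\Gamma\subwall)$ as above), and using the uniform bounds on $\velocity^t,\velocity\subin,\temperature^0,\temperature\subin$ and the embedding $H^1(\Omega) \embedding L^4(\Omega)$ exactly as in the proof of Lemma~\ref{lem:operator_analysis}, gives $\norm{R^t}{\tspace(\Omega)'} \to 0$. Testing the difference equation with $\temperature^t - \temperature^0$ and invoking the $t$-uniform coercivity of $c(t,\velocity^t,\cdot,\cdot)$ from Lemma~\ref{lem:operator_analysis} (applicable because $\norm{\velocity^t}{\fspace(\Omega)}$ is uniformly small) yields $C(\Omega)\norm{\temperature^t - \temperature^0}{\tspace(\Omega)}^2 \leq R^t(\temperature^t - \temperature^0) \leq \norm{R^t}{\tspace(\Omega)'}\norm{\temperature^t - \temperature^0}{\tspace(\Omega)}$, hence $\norm{\temperature^t - \temperature^0}{\tspace(\Omega)} \to 0$. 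Adding Steps~1 and~2 gives \eqref{eq:convergence_state}.

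\emph{Main obstacle.} The delicate point is the boundary term $\integral{\Gamma\subwall} \htc (\temperature\subwall \circ \flow)\,\testtemp\,\omega(t) \dx{s}$ in $f_\temperature$: one must ensure $\temperature\subwall \circ \flow \to \temperature\subwall$ in $L^2(\Gamma\subwall)$, which is exactly where the $H^2$-regularity of $\temperature\subwall$ is used (it gives $H^1$-convergence via Lemma~\ref{lem:shape_composition}, and then trace convergence). Beyond that the bookkeeping is routine, but it must respect the coupling order—the Stokes estimate of Step~1 has to come first, since $\velocity^t$ enters the convection term of the temperature equation. An alternative route is to first extract, using \eqref{eq:energy_estimates}, a weakly convergent subsequence $\solution^{t_n} \rightharpoonup \solution^*$, pass to the limit in \eqref{eq:abstract_weak_state} with the coefficient convergences above, identify $\solution^* = \solution^0$ by the uniqueness in Lemma~\ref{lem:analysis_state_system}, and then upgrade to strong convergence via the same coercivity/testing argument; the perturbation argument above has the advantage of avoiding the subsequence extraction.
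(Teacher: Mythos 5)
Your proposal is correct and follows the same overall strategy as the paper: a perturbation estimate on the one-way coupled system \eqref{eq:one_way_stokes}--\eqref{eq:one_way_temperature}, exploiting the $t$-uniform coercivity, continuity, and LBB constants of Lemma~\ref{lem:operator_analysis}, the uniform energy bounds of Lemma~\ref{lem:analysis_state_system}, and the convergences $A(t)\to I$, $B(t)\to I$, $\omega(t)\to 1$ and $\temperature\subwall\circ\flow\to\temperature\subwall$ in $H^1(\R^d)$. The one point where you genuinely diverge is the Stokes step: you bound $(\velocity^t-\velocity^0,\pressure^t-\pressure^0)$ in one stroke by the abstract saddle-point stability estimate applied to the difference system with right-hand sides $R_1^t, R_2^t$, whereas the paper argues by hand --- it tests \eqref{eq:test_velo} with $e_\velocity(t)$, identifies $\divergence{e_\velocity(t)}$ via the Riesz representation theorem applied to \eqref{eq:test_pres} to control the mixed term $\integral{\Omega} e_\pressure(t)\,\divergence{e_\velocity(t)}\dx{x}$, and only afterwards recovers the pressure by choosing $\testvelo(t)$ with $\divergence{\testvelo(t)}=e_\pressure(t)$ via the surjectivity of the divergence operator and the open mapping theorem. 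Your route is more compact, at the price of implicitly using that the stability constant in the Lions--Lax--Milgram/Brezzi estimate depends only on the ($t$-independent) coercivity, continuity, and inf-sup constants; the paper's route makes every constant explicit. In the temperature step the two arguments also differ cosmetically: you keep $c(t,\velocity^t,\cdot,\cdot)$ as the coercive form on the left (using that $\norm{\velocity^t}{\fspace(\Omega)}$ is uniformly small), while the paper keeps the convective term with $\velocity^0$ on the left and moves the $e_\velocity(t)$ contribution to the right-hand side --- both are covered by Lemma~\ref{lem:operator_analysis} under Assumption~\ref{ass:smallness}, and your identification of the boundary term with $\temperature\subwall\circ\flow-\temperature\subwall$ as the only delicate ingredient matches the paper's estimate \eqref{eq:estimate_rhs}.
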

\begin{proof}
	Let $\solution^t = (\velocity^t, \pressure^t, \temperature^t)$ and $\solution^0 = (\velocity^0, \pressure^0, \temperature^0)$ be as above. Furthermore, we write $e_\velocity(t) := \velocity^t - \velocity^0$, $e_\pressure(t) := \pressure^t - \pressure^0$, and $e_\temperature(t) := \temperature^t - \temperature^0$. It is easy to see that \eqref{eq:convergence_state} is equivalent to
	\begin{equation*}
		\lim\limits_{t \searrow 0} \norm{e_\velocity(t)}{\fspace(\Omega)} = 0, \quad \lim\limits_{t \searrow 0} \norm{e_\pressure(t)}{\pspace(\Omega)} = 0, \quad \text{ and } \quad \lim\limits_{t \searrow 0} \norm{e_\temperature(t)}{\tspace(\Omega)} = 0,
	\end{equation*}
	which we show in the following. 
	Note, that the energy estimates \eqref{eq:energy_estimates} from Lemma~\ref{lem:analysis_state_system} combined with the triangle inequality yield
	\begin{equation}
		\label{eq:triangle}
		\norm{e_\velocity}{\fspace(\Omega)} \leq \norm{\velocity^t}{\fspace(\Omega)} + \norm{\velocity^0}{\fspace(\Omega)} \leq C(\Omega) \quad \text{ and } \quad \norm{e_\pressure}{\pspace(\Omega)} \leq \norm{\pressure^t}{\pspace(\Omega)} + \norm{\pressure^0}{\pspace(\Omega)} \leq C(\Omega).
	\end{equation}
	
	\subsubsection*{Convergence of the Velocity}
	
	Subtracting the weak forms \eqref{eq:one_way_stokes} for $t \in [0, \tau]$ and $t=0$ gives, after rearranging the resulting expression, 
	\begin{equation}
		\label{eq:test_velo}
		\begin{aligned}
			\integral{\Omega} \viscosity\ D e_\velocity(t) : D\testvelo \dx{x} = &\integral{\Omega} \viscosity \left( D(\velocity^t + \velocity\subin) (I - A(t)) \right) : D\testvelo \dx{x} - \integral{\Omega} \pressure^t\ \tr\left( D\testvelo (I - B(t)\transposed) \right) \dx{x}  \\
			& + \integral{\Omega} e_\pressure(t)\ \divergence{\testvelo} \dx{x} \quad \text{ for all } \testvelo \in \fspace(\Omega),
		\end{aligned}
	\end{equation}
	as well as
	\begin{equation}
		\label{eq:test_pres}
		\integral{\Omega} \testpres\ \divergence{e_\velocity(t)} \dx{x} = \integral{\Omega} \testpres\ \tr\left( D(\velocity^t + \velocity\subin) (I - B(t)\transposed) \right) \dx{x} \quad \text{ for all } \testpres \in \pspace(\Omega).
	\end{equation}
	Due to H\"older's inequality and \eqref{eq:energy_estimates} we have for all $\testvelo \in \fspace(\Omega)$ that
	\begin{align}
		\label{eq:holder_velo1}
		&\abs{\integral{\Omega} \viscosity\ D e_\velocity(t) : D\testvelo \dx{x}} \leq C \norm{e_\velocity(t)}{\fspace(\Omega)} \norm{\testvelo}{\fspace(\Omega)}, \\
		\label{eq:holder_velo2}
		&\abs{\integral{\Omega} \viscosity \left( D(\velocity^t + \velocity\subin) (I - A(t)) \right) : D\testvelo \dx{x}} \leq C(\Omega) \norm{I - A(t)}{L^\infty(\Omega)^{d\times d}} \norm{\testvelo}{\fspace(\Omega)}, \\
		\label{eq:holder_velo3}
		&\abs{\integral{\Omega} \pressure^t\ \tr\left( D\testvelo (I - B(t)\transposed) \right) \dx{x}} \leq C(\Omega) \norm{I - B(t)\transposed}{L^\infty(\Omega)^{d\times d}} \norm{\testvelo}{\fspace(\Omega)}.
	\end{align}
	Further, Poincar\'e's inequality gives
	\begin{equation}
		\label{eq:left_velo}
		C(\Omega) \norm{e_\velocity(t)}{\fspace(\Omega)}^2 \leq \integral{\Omega} \viscosity\ D e_\velocity(t) : D e_\velocity(t) \dx{x} \quad \text{ for all } t\in [0,\tau].
	\end{equation}
	As $\pspace(\Omega) = L^2(\Omega)$, the Riesz representation theorem and \eqref{eq:test_pres} reveals that
	\begin{equation*}
		\divergence{e_\velocity(t)} = \tr\left( D\left( \velocity^t + \velocity\subin \right) \left( I - B(t)\transposed \right) \right) \quad \text{ in } \pspace(\Omega),
	\end{equation*}
	 and H\"older's inequality in addition to \eqref{eq:energy_estimates} then gives
	\begin{equation*}
		\norm{\divergence{e_\velocity(t)}}{\pspace(\Omega)} \leq C(\Omega) \norm{I - B(t)\transposed}{L^\infty(\Omega)^{d\times d}}.
	\end{equation*}
	Using the previous estimate in addition to H\"older's inequality and \eqref{eq:triangle} reveals that
	\begin{equation}
		\label{eq:rhs2_velo}
		\abs{\integral{\Omega} e_\pressure(t)\ \divergence{e_\velocity(t)} \dx{x}} \leq \norm{e_\pressure(t)}{\pspace(\Omega)} \norm{\divergence{e_\velocity(t)}}{\pspace(\Omega)} \leq C(\Omega) \norm{I - B(t)\transposed}{L^\infty(\Omega)^{d\times d}}.
	\end{equation}
	Choosing $\testvelo = e_\velocity(t)$ in \eqref{eq:test_velo}, estimating the term on the left of this equation by \eqref{eq:left_velo} and those on the other side by \eqref{eq:holder_velo2}, \eqref{eq:holder_velo3}, \eqref{eq:rhs2_velo}, and using \eqref{eq:triangle} to estimate $\norm{e_\velocity(t)}{\fspace(\Omega)}$ on the right-hand side gives
	\begin{equation*}
		\norm{e_\velocity(t)}{\fspace(\Omega)}^2 \leq C(\Omega) \left( \norm{I - A(t)}{L^\infty(\Omega)^{d\times d}} + \norm{I - B(t)\transposed}{L^\infty(\Omega)^{d\times d}} \right). 
	\end{equation*}
	As $C(\Omega)$ is independent of $t$, we get from the above estimate and Lemma~\ref{lem:shape_mappings} that
	\begin{equation}
		\label{eq:convergence_velo}
		\lim\limits_{t \searrow 0} \norm{e_\velocity(t)}{\fspace(\Omega)} = 0.
	\end{equation}
	
	\subsubsection*{Convergence of the Pressure}
	
	Isolating the final term on the right-hand side of \eqref{eq:test_velo} and estimating the remaining terms by \eqref{eq:holder_velo1}, \eqref{eq:holder_velo2}, and \eqref{eq:holder_velo3} gives
	\begin{equation}
		\label{eq:help_pressure}
		\integral{\Omega} e_\pressure(t)\ \divergence{\testvelo} \dx{x} \leq C(\Omega) \norm{\testvelo}{\fspace(\Omega)} \left( \norm{I - A(t)}{L^\infty(\Omega)^{d\times d}} + \norm{I - B(t)\transposed}{L^\infty(\Omega)^{d\times d}} + \norm{e_\velocity(t)}{\fspace(\Omega)} \right).
	\end{equation}
	Since the divergence operator $\text{div} \colon \fspace(\Omega) \to \pspace(\Omega)$ is surjective (cf. Lemma~\ref{lem:operator_analysis}), there exists a $\testvelo(t) \in  \fspace(\Omega)$ such that $\divergence{\testvelo(t)} = e_\pressure(t)$ in $\pspace(\Omega)$ for all $t\in [0,\tau]$. Moreover, as a consequence of the open mapping theorem (see, e.g., \cite{alt} or \cite[Lemma~A.36]{ern_guermond}) there exists a constant $C(\Omega) > 0$ such that
	\begin{equation}
		\label{eq:open_mapping}
		\norm{\testvelo(t)}{\fspace(\Omega)} \leq C(\Omega) \norm{e_\pressure(t)}{\pspace(\Omega)} \leq C(\Omega),
	\end{equation}
	where we used \eqref{eq:triangle} for the last estimate. Choosing $\testvelo = \testvelo(t)$ in \eqref{eq:help_pressure} and using \eqref{eq:open_mapping} yields
	\begin{equation*}
		\begin{aligned}
			&\norm{e_\pressure(t)}{\pspace(\Omega)}^2 = \integral{\Omega} e_\pressure(t)\ \divergence{\testvelo(t)} \dx{x} \\
			\leq\ &C(\Omega) \left( \norm{I - A(t)}{L^\infty(\Omega)^{d\times d}} + \norm{I - B(t)\transposed}{L^\infty(\Omega)^{d\times d}} + \norm{e_\velocity(t)}{\fspace(\Omega)} \right).
		\end{aligned}
	\end{equation*}
	Due to Lemma~\ref{lem:shape_mappings} and \eqref{eq:convergence_velo}, we get from the previous estimate that $\lim_{t \searrow 0} \norm{e_\pressure(t)}{\pspace(\Omega)} = 0$.
	
	\subsubsection*{Convergence of the Temperature}
	
	Finally, we investigate the convergence of the temperature component. Subtracting \eqref{eq:one_way_temperature} for $t\in [0, \tau]$ and $t=0$ yields, after rearranging and testing with $\testtemp = e_\temperature(t)$, the following
	\begin{equation}
		\label{eq:tested_temp}
		\begin{aligned}
			&\integral{\Omega} \conductivity \grad e_\temperature(t) \cdot \grad e_\temperature(t) \dx{x} + \integral{\Gamma\subwall} \htc\ e_\temperature(t)^2 \dx{x} + \integral{\Omega} \density\hcapacity \left(\velocity^0 + \velocity\subin\right)\cdot \grad e_\temperature(t)\ e_\temperature(t) \dx{x} \\
			= &\integral{\Omega} \conductivity \left( (I - A(t)) \grad (\temperature^t + \temperature\subin) \right) \cdot \grad e_\temperature(t) \dx{x} + \integral{\Omega} \density\hcapacity \left(\velocity^t + \velocity\subin\right) \cdot \left( (I - B(t)) \grad (\temperature^t + \temperature\subin)\right) e_\temperature(t) \dx{x} \\
			&- \integral{\Omega} \density\hcapacity\ e_\velocity(t) \cdot \grad (\temperature^t + \temperature\subin)\ e_\temperature(t) \dx{x} + \integral{\Gamma\subwall} \htc \left((\temperature^t + \temperature\subin) - \temperature\subwall \circ \flow\right) e_\temperature(t) (1 - \omega(t)) \dx{s} \\
			&+ \integral{\Gamma\subwall} \htc \left( \temperature\subwall \circ \flow - \temperature\subwall \right)e_\temperature(t) \dx{s}.
		\end{aligned}
	\end{equation}
	Using Poincar\'e's inequality and Assumption~\ref{ass:smallness} analogously to the proof of Lemma~\ref{lem:operator_analysis} reveals that
	\begin{equation}
		\label{eq:estimate_temp_poincare}
		C(\Omega) \norm{e_\temperature(t)}{\tspace(\Omega)}^2 \leq \integral{\Omega} \conductivity \grad e_\temperature(t) \cdot \grad e_\temperature(t) \dx{x} + \integral{\Gamma\subwall} \htc\ e_\temperature(t)^2 \dx{x} + \integral{\Omega} \density\hcapacity \left(\velocity^0 + \velocity\subin\right)\cdot \grad e_\temperature(t)\ e_\temperature(t) \dx{x}.
	\end{equation}
	For the terms of the right-hand side of \eqref{eq:tested_temp} we use H\"older's inequality, Lemma~\ref{lem:shape_composition} and \eqref{eq:energy_estimates} to obtain
	\begin{equation}
		\label{eq:estimate_rhs}
		\begin{aligned}
			&\abs{\integral{\Omega} \conductivity \left( (I - A(t)) \grad (\temperature^t + \temperature\subin) \right) \cdot \grad e_\temperature(t) \dx{x}} \leq C(\Omega) \norm{I - A(t)}{L^\infty(\Omega)^{d\times d}} \norm{e_\temperature(t)}{\tspace(\Omega)}, \\
			&\abs{\integral{\Omega} \density\hcapacity \left(\velocity^t + \velocity\subin\right) \cdot \left( (I - B(t)) \grad (\temperature^t + \temperature\subin)\right) e_\temperature(t) \dx{x}} \leq C(\Omega) \norm{I - B(t)}{L^\infty(\Omega)^{d\times d}} \norm{e_\temperature(t)}{\tspace(\Omega)}, \\
			&\abs{\integral{\Omega} \density\hcapacity\ e_\velocity(t) \cdot \grad (\temperature^t + \temperature\subin)\ e_\temperature(t) \dx{x}} \leq C(\Omega) \norm{e_\velocity(t)}{\fspace(\Omega)} \norm{e_\temperature(t)}{\tspace(\Omega)}, \\
			&\abs{\integral{\Gamma\subwall} \htc \left((\temperature^t + \temperature\subin) - \temperature\subwall \circ \flow\right) e_\temperature(t) (1 - \omega(t)) \dx{s}} \leq C(\Omega) \norm{1 - \omega(t)}{L^\infty(\Omega)} \norm{e_\temperature(t)}{\tspace(\Omega)}, \\
			&\abs{\integral{\Gamma\subwall} \htc \left( \temperature\subwall \circ \flow - \temperature\subwall \right)e_\temperature(t) \dx{s}} \leq C \norm{\temperature\subwall \circ \flow - \temperature\subwall}{H^1(\R^d)} \norm{e_\temperature(t)}{\tspace(\Omega)}.
		\end{aligned}
	\end{equation}
	Plugging estimates \eqref{eq:estimate_temp_poincare} and \eqref{eq:estimate_rhs} into \eqref{eq:tested_temp} and dividing by $\norm{e_\temperature(t)}{\tspace(\Omega)}$ gives
	\begin{equation*}
		\begin{aligned}
			\norm{e_\temperature(t)}{\tspace(\Omega)} \leq\ C(\Omega) &\left( \norm{I - A(t)}{L^\infty(\Omega)^{d\times d}} + \norm{I - B(t)}{L^\infty(\Omega)^{d\times d}} + \norm{e_\velocity(t)}{\fspace(\Omega)} \right. \\
			&\left. \quad  + \norm{1 - \omega(t)}{L^\infty(\Omega)} + \norm{\temperature\subwall \circ \flow - \temperature\subwall}{H^1(\R^d)} \right).
		\end{aligned}
	\end{equation*}
	Similarly to before, using \eqref{eq:convergence_velo} as well as Lemmas~\ref{lem:shape_mappings} and~\ref{lem:shape_composition} reveals that $\lim_{t\searrow 0}\norm{e_\temperature(t)}{\tspace(\Omega)} = 0$. Altogether, we have shown that $\lim_{t\searrow 0} \norm{\solution^t - \solution^0}{\statespace(\Omega)} = 0$, completing the proof.
\end{proof}

Using this result, we can now show the weak convergence of $\adsolution^t \rightharpoonup \adsolution^0$ in $\statespace(\Omega)$ as follows.

\begin{Lemma}
	Let $\set{\adsolution^t} = Y(t, \solution^t, \solution^0)$ be the solution set of the averaged adjoint system \eqref{eq:abstract_averaged_adjoint} and let $\set{\adsolution^0} = Y(0, \solution^0)$ be the solution set of the usual adjoint system \eqref{eq:weak_adjoint}. Then, we have for $t\to 0$ that
	\begin{equation*}
		\adsolution^t \rightharpoonup \adsolution^0 \text{ in } \statespace(\Omega).
	\end{equation*}
\end{Lemma}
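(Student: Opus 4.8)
The plan is a standard weak-compactness argument combined with the uniqueness of the limit. By Lemma~\ref{lem:analysis_averaged_adjoint} the family $\set{\adsolution^t}_{t\in[0,\tau]}$ is bounded in the Hilbert space $\statespace(\Omega)$ by a constant independent of $t$. Hence, for any sequence $t_n \searrow 0$ there are a subsequence $(t_{n_k})$ and some $\adsolution^\ast = (\advelo^\ast, \adpres^\ast, \adtemp^\ast) \in \statespace(\Omega)$ with $\adsolution^{t_{n_k}} \rightharpoonup \adsolution^\ast$ in $\statespace(\Omega)$, that is, $\advelo^{t_{n_k}} \rightharpoonup \advelo^\ast$ in $\fspace(\Omega)$, $\adpres^{t_{n_k}} \rightharpoonup \adpres^\ast$ in $\pspace(\Omega)$, and $\adtemp^{t_{n_k}} \rightharpoonup \adtemp^\ast$ in $\tspace(\Omega)$ (in particular $\adtemp^{t_{n_k}} \rightharpoonup \adtemp^\ast$ also weakly in $L^4(\Omega)$ via the continuous embedding $H^1(\Omega) \embedding L^4(\Omega)$, and the traces converge weakly in $L^2(\Gamma\subwall)$).

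Next I would pass to the limit $k \to \infty$ in the averaged adjoint equation $e^\ast(t_{n_k}, \testadsolution, \adsolution^{t_{n_k}}) = 0$ for a fixed test function $\testadsolution \in \statespace(\Omega)$. All $t$-dependent coefficients converge strongly: $A(t), B(t) \to I$ and $\omega(t), \xi(t) \to 1$ in $L^\infty$ by Lemma~\ref{lem:shape_mappings}; $\tfrac12(\velocity^t + \velocity^0) \to \velocity^0$ in $\fspace(\Omega)$ and $\tfrac12(\temperature^t + \temperature^0) \to \temperature^0$ in $\tspace(\Omega)$ by Lemma~\ref{lem:shape_continuity}; and $\temperature\subwall \circ \flow \to \temperature\subwall$ in $H^1(\R^d)$, $\velocity\subdes \circ \flow \to \velocity\subdes$ in $L^2$ by Lemma~\ref{lem:shape_composition}, whence also $\flux(t, \tfrac12(\temperature^t+\temperature^0)) \to \flux(0, \temperature^0)$. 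The decisive structural observation is that $e^\ast$ is linear in its last argument, so in every term the factor containing $\adsolution^{t_{n_k}}$ occurs exactly once and is multiplied only by (a product of) the above strongly convergent coefficients and the fixed $\testadsolution$. Since such a product converges strongly in the relevant $L^p$ space, the pairing with the weakly convergent $\adsolution^{t_{n_k}}$ passes to the limit; for the convective terms of $c$ and $g_\velocity$, of the form $\int_\Omega \density\hcapacity(\tfrac12(\velocity^t+\velocity^0)+\velocity\subin)\cdot(B(t)\grad\testadtemp)\,\adtemp^{t_{n_k}}\dx{x}$ and $\int_\Omega \density\hcapacity\,\testadvelo\cdot(B(t)\grad(\tfrac12(\temperature^t+\temperature^0)+\temperature\subin))\,\adtemp^{t_{n_k}}\dx{x}$, one writes the coefficient as a product of an $L^4$-function and an $L^2$-function (Hölder, using $H^1\embedding L^4$ as in the proof of Lemma~\ref{lem:operator_analysis}), obtaining a strongly convergent $L^{4/3}$-factor that is paired against the weakly $L^4$-convergent $\adtemp^{t_{n_k}}$. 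The right-hand side terms $g_\temperature(t_{n_k},\cdot)$ and $g_\velocity(t_{n_k},\adtemp^{t_{n_k}},\cdot)$ converge to their $t=0$ counterparts by the same reasoning. This shows $e^\ast(0,\testadsolution,\adsolution^\ast) = 0$ for all $\testadsolution\in\statespace(\Omega)$, i.e., $\adsolution^\ast$ solves the usual adjoint system \eqref{eq:weak_adjoint}.

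Finally, Lemma~\ref{lem:analysis_averaged_adjoint} at $t=0$ (equivalently, Assumption~\ref{ass:H2}) gives that \eqref{eq:weak_adjoint} has a unique solution, so $\adsolution^\ast = \adsolution^0$. Since every sequence $t_n\searrow 0$ thus admits a subsequence along which $\adsolution^{t_n}\rightharpoonup\adsolution^0$, and the limit is independent of the subsequence, a routine subsequence-of-subsequence argument yields $\adsolution^t \rightharpoonup \adsolution^0$ in $\statespace(\Omega)$ as $t\searrow 0$.

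The only point requiring genuine care is the limit passage in the convective terms of $e^\ast$: only weak $H^1$-convergence of $\adsolution^{t_{n_k}}$ is available, so one must organize each such term so that the weakly convergent adjoint factor is paired with a strongly convergent coefficient, which forces the Hölder bookkeeping with the Sobolev embedding $H^1(\Omega)\embedding L^4(\Omega)$ (valid for $2\le d\le 4$). Everything else is a direct consequence of Lemmas~\ref{lem:shape_mappings}, \ref{lem:shape_composition}, \ref{lem:analysis_averaged_adjoint} and~\ref{lem:shape_continuity}.
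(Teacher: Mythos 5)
Your proposal is correct and follows essentially the same route as the paper's proof: uniform boundedness of $\adsolution^t$ from Lemma~\ref{lem:analysis_averaged_adjoint}, extraction of a weakly convergent subsequence in the Hilbert space $\statespace(\Omega)$, passage to the limit in $e^*(t,\testadsolution,\cdot)=0$ using the strong convergence supplied by Lemmas~\ref{lem:shape_mappings}, \ref{lem:shape_composition}, and~\ref{lem:shape_continuity}, and identification of the limit with $\adsolution^0$ by uniqueness. The paper compresses the limit passage into a citation of a product-convergence result, whereas you spell out the H\"older/Sobolev-embedding bookkeeping for the convective terms and make the subsequence-of-subsequences step explicit; both are sound.
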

\begin{proof}
	Let $(t_n)_{n\in \mathbb{N}} \subset \R$ be a sequence with $t_n \geq 0$ and $\lim_{n\to \infty} t_n = 0$. As a consequence of Lemma~\ref{lem:analysis_averaged_adjoint} we see that $\adsolution^{t_n}$ is uniformly bounded in $\statespace(\Omega)$, which is a Hilbert space. Therefore, there exists a subsequence $(t_{n_k})_{k\in \mathbb{N}}$ and an element $Q\in \statespace(\Omega)$ such that $\adsolution^{t_{n_k}} \rightharpoonup Q$ in $\statespace(\Omega)$. Due to Lemmas~\ref{lem:shape_mappings}, \ref{lem:shape_composition}, and~\ref{lem:shape_continuity}, and \cite[Remark~6.3]{alt} we observe that $e^*(t,\testadsolution, \adsolution^t) \to e^*(0, \testadsolution, Q)$ for all $\testadsolution \in \statespace(\Omega)$. In particular, we get $e^*(0,\testadsolution, Q) = 0$ for all $\testadsolution \in \statespace(\Omega)$. Therefore, we indeed have $Q = \adsolution^0$ due to the uniqueness of the limit (cf. Lemma~\ref{lem:analysis_averaged_adjoint}).
\end{proof}

Now, we prove the weak continuity of the mapping $(t, \adsolution) \mapsto \partial_t \shapelagrangian(t, \solution^0, \adsolution)$. 

\begin{Lemma}
	The mapping
	\begin{equation*}
		[0, \tau] \times \statespace(\Omega) \to \R; \quad (t, \adsolution) \mapsto \partial_t \shapelagrangian(t, \solution^0, \adsolution)
	\end{equation*}
	is weakly continuous.
\end{Lemma}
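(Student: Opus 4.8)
The plan is to work straight from the explicit formula \eqref{eq:partial_time} for $\partial_t\shapelagrangian(t,\solution^0,\adsolution)$. First I would split its right-hand side into the part that does not depend on $\adsolution$ — the first three integrals, coming from the pulled-back cost functional $j$ — and the part that is linear in $\adsolution=(\advelo,\adpres,\adtemp)$ — the remaining integrals, which I denote by $\ell_t(\adsolution)$. Here $\ell_t$ is a bounded linear functional on $\statespace(\Omega)$ whose operator norm is bounded uniformly in $t\in[0,\tau]$; this follows from Hölder's inequality, the Sobolev embedding $H^1(\Omega)\embedding L^4(\Omega)$ (this is where $d\le 4$ enters), the trace theorem, the regularity $\temperature\subwall\in H^2(\R^d)$, and the uniform bounds on $A'(t),B'(t),\xi'(t),\omega(t),\omega'(t)$ coming from Lemma~\ref{lem:shape_mappings}. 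To prove weak continuity I then fix $t_n\to t$ in $[0,\tau]$ and $\adsolution^n\rightharpoonup\adsolution$ in $\statespace(\Omega)$ and treat the two parts separately, showing $\partial_t\shapelagrangian(t_n,\solution^0,\adsolution^n)\to\partial_t\shapelagrangian(t,\solution^0,\adsolution)$.

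For the $\adsolution$-independent part I would argue that it is a continuous function of $t$ alone. By Lemma~\ref{lem:shape_mappings} the maps $t\mapsto\xi(t),\xi'(t),\omega(t),\omega'(t)$ are continuous with values in $C(\R^d;\R)$; the map $t\mapsto\flux(t,\temperature^0)$ is continuous by its definition together with Lemmas~\ref{lem:shape_mappings} and~\ref{lem:shape_composition}; and the compositions $\temperature\subwall\circ\flow$, $(\grad\temperature\subwall\cdot\vectorfield)\circ\flow$, $\velocity\subdes\circ\flow$ and $(D\velocity\subdes\,\vectorfield)\circ\flow$ depend continuously on $t$ in $H^1(\R^d)$ — hence, via the trace theorem, in $L^2(\Gamma\subwall)$, and in $L^2(\Omega\subdomain)$ — using $\temperature\subwall\in H^2(\R^d)$, $\velocity\subdes\in H^1(\R^d)^d$, Lemma~\ref{lem:shape_composition}, and the semigroup property of $\flow$ to reduce to the case $t=0$ (for the application to Assumption~\ref{ass:H3} only $t=0$ is actually needed). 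Collecting these facts and applying Hölder's inequality gives convergence of the first three integrals in \eqref{eq:partial_time}.

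For the $\adsolution$-linear part I would use the decomposition
\begin{equation*}
	\ell_{t_n}(\adsolution^n)-\ell_t(\adsolution)=\bigl(\ell_{t_n}(\adsolution^n)-\ell_t(\adsolution^n)\bigr)+\bigl(\ell_t(\adsolution^n)-\ell_t(\adsolution)\bigr).
\end{equation*}
The second bracket tends to $0$ because $\ell_t$ is a fixed element of $\statespace(\Omega)^*$ and $\adsolution^n\rightharpoonup\adsolution$. In the first bracket, every term of $\ell_{t_n}-\ell_t$ carries exactly one factor of the form $A'(t_n)-A'(t)$, $B'(t_n)-B'(t)$, $\omega'(t_n)-\omega'(t)$, $\omega(t_n)-\omega(t)$, or a difference of $\temperature\subwall$ composed with the flows at times $t_n$ and $t$, each of which vanishes in $L^\infty(\R^d)$, respectively in $L^2(\Gamma\subwall)$, by Lemmas~\ref{lem:shape_mappings} and~\ref{lem:shape_composition}; since a weakly convergent sequence is bounded (Banach--Steinhaus) and the remaining factors in each term are fixed elements of $\statespace(\Omega)$, Hölder's inequality and $H^1(\Omega)\embedding L^4(\Omega)$ yield a bound $\abs{\ell_{t_n}(\adsolution^n)-\ell_t(\adsolution^n)}\le C\,\varepsilon_n\sup_n\norm{\adsolution^n}{\statespace(\Omega)}$ with $\varepsilon_n\to 0$. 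For the surface integrals involving $\adtemp^n$ one may alternatively invoke the compactness of the trace embedding $\tspace(\Omega)\embedding L^2(\Gamma\subwall)$, which gives $\adtemp^n\to\adtemp$ strongly in $L^2(\Gamma\subwall)$ and lets those terms converge directly. Together with the previous paragraph this establishes the claimed weak continuity.

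I expect the main obstacle to be not any single estimate but the interaction between the vanishing geometric perturbation $t_n$ and the merely weak convergence of $\adsolution^n$; this is exactly what the add-and-subtract splitting, combined with the uniform boundedness of weakly convergent sequences, is designed to overcome, after which each piece reduces either to weak sequential continuity of a fixed bounded linear functional or to an $L^\infty$/$L^2$-convergence statement for the geometric quantities supplied by Lemmas~\ref{lem:shape_mappings} and~\ref{lem:shape_composition}. A secondary point requiring care is the treatment of the boundary integrals on $\Gamma\subwall$, where the regularity $\temperature\subwall\in H^2(\R^d)$ and the (compact) trace theorem are essential.
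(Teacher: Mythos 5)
Your proposal is correct and follows essentially the same route as the paper: work from the explicit formula \eqref{eq:partial_time}, obtain strong convergence of all $t$-dependent quantities from Lemmas~\ref{lem:shape_mappings} and~\ref{lem:shape_composition}, and pair this with the weak convergence of $\adsolution^n$. The only difference is that the paper delegates the ``strongly convergent times weakly convergent'' step to a textbook reference, whereas you write out the add-and-subtract argument (and the continuity in $t$ away from $0$ via the semigroup property) in full, which is a faithful expansion of the same proof.
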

\begin{proof}
	Let $t^n \rightharpoonup t$ in $\R$ and $\adsolution^n \rightharpoonup \adsolution$ in $\statespace(\Omega)$. Thanks to Assumption~\ref{ass:H1} we can differentiate $\shapelagrangian$ w.r.t.\ $t$ and end up with the formula given in \eqref{eq:partial_time}. Due to Lemmas~\ref{lem:shape_mappings} and~\ref{lem:shape_composition} we get the strong convergence of all mappings involving $t$. Using \cite[Remark~6.3]{alt} we get that $\partial_t \shapelagrangian(t^n, \solution^0, \adsolution^n) \to \partial_t \shapelagrangian(t, \solution^0, \adsolution)$ in $\R$, proving the weak continuity as desired.
\end{proof}
This completes the verification of Assumption~\ref{ass:H3}. Hence, we have proved that the assumptions of Theorem~\ref{thm:sturm} are indeed satisfied for our problem and we obtain the shape differentiability of our cost functional. Finally, we summarize this result in the following theorem.
\begin{Theorem}
	\label{thm:shape_derivative}
	Let $\Omega \in \admissiblegeom$, $\vectorfield \in \admissibledefo$, and $\flow$ be the flow associated to $\vectorfield$. Then, the reduced cost functional \eqref{eq:reduced_cost_function} is shape differentiable at $\Omega$ w.r.t. $\admissibledefo$ with shape derivative
	\begin{equation*}
		\left\lbrace \quad
		\begin{aligned}
			&d\reducedcostfunction(\Omega)[\vectorfield] \\
			=\ &2\weighttemp \left( \flux(0, \temperature^0) - \fluxdes \right) \integral{\Gamma\subwall} \htc \left( \temperature\subwall - (\temperature^0 + \temperature\subin) \right) \tdiv{\vectorfield} + \htc\ \grad \temperature\subwall \cdot \vectorfield \dx{s} \\
			&+ \weightvelo \integral{\Omega\subdomain} \abs{(\velocity^0 + \velocity\subin) - \velocity\subdes}^2 \divergence{\vectorfield} - 2\left( (\velocity^0 + \velocity\subin) - \velocity\subdes \right) \cdot \left( D\velocity\subdes\ \vectorfield \right) \dx{x} \\
			&+ \weightreg \integral{\Gamma} \tdiv{\vectorfield} \dx{s} + \integral{\Omega} \viscosity \left(D(\velocity^0 + \velocity\subin) (\divergence{\vectorfield}I- 2\varepsilon(\vectorfield)) \right) : D\advelo^0 \dx{x} \\
			&- \integral{\Omega} \pressure^0\ \tr\left( D\advelo^0 (\divergence{\vectorfield}I - D\vectorfield ) \right) \dx{x} - \integral{\Omega} \adpres^0\ \tr\left( D(\velocity^0 + \velocity\subin) (\divergence{\vectorfield}I - D\vectorfield) \right) \dx{x} \\
			&+ \integral{\Omega} \conductivity \left( (\divergence{\vectorfield}I - 2\varepsilon(\vectorfield)) \grad (\temperature^0 + \temperature\subin ) \right) \cdot \grad \adtemp^0 \dx{x} \\
			&+ \integral{\Omega} \density \hcapacity \left(\velocity^0 + \velocity\subin\right) \cdot \left((\divergence{\vectorfield}I - D\vectorfield^\top) \grad (\temperature^0 + \temperature\subin) \right) \ \adtemp^0 \dx{x} \\
			&+ \integral{\Gamma\subwall} \htc \left((\temperature^0 + \temperature\subin) - \temperature\subwall\right) \adtemp^0\ \tdiv{\vectorfield} - \htc \grad \temperature\subwall \cdot \vectorfield \adtemp^0 \dx{s},
		\end{aligned}
		\right.
	\end{equation*}
	where $\solution^0 = (\velocity^0, \pressure^0, \temperature^0)$ solves the state system \eqref{eq:weak_state_reference} and $\adsolution^0 = (\advelo^0, \adpres^0, \adtemp^0)$ solves the adjoint system \eqref{eq:weak_adjoint}.
\end{Theorem}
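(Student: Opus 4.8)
The plan is to read off the shape derivative from the abstract differentiability result Theorem~\ref{thm:sturm}, applied to the shape Lagrangian $\shapelagrangian$, and then to simplify the resulting expression $\partial_t\shapelagrangian(0,\solution^0,\adsolution^0)$ with the help of Lemma~\ref{lem:shape_mappings}. Most of the preparatory work is already in place: in Section~\ref{sec:relation_to_abstract} we established the identity \eqref{eq:relation}, which expresses $d\reducedcostfunction(\Omega)[\vectorfield]$ as $\left.\frac{d}{dt}\shapelagrangian(t,\solution^t,\adsolution)\right\rvert_{t=0^+}$ for an arbitrary multiplier $\adsolution\in\statespace(\Omega)$, and in Section~\ref{sec:verification} we verified Assumptions~\ref{ass:H0}--\ref{ass:H3} for $G=\shapelagrangian$ with $E=F=\statespace(\Omega)$. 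Hence the first step is simply to invoke Theorem~\ref{thm:sturm}, which yields
\begin{equation*}
	\left.\frac{d}{dt}\shapelagrangian(t,\solution^t,\adsolution)\right\rvert_{t=0^+} = \partial_t\shapelagrangian(0,\solution^0,\adsolution^0) \qquad \text{for all } \adsolution\in\statespace(\Omega),
\end{equation*}
with $\adsolution^0=(\advelo^0,\adpres^0,\adtemp^0)$ the unique solution of the adjoint system \eqref{eq:weak_adjoint} (cf.\ Assumption~\ref{ass:H2}). Combined with \eqref{eq:relation}, this already shows that the Eulerian semi-derivative of $\reducedcostfunction$ at $\Omega$ in direction $\vectorfield$ exists and equals $\partial_t\shapelagrangian(0,\solution^0,\adsolution^0)$.

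The second step is to evaluate $\partial_t\shapelagrangian(0,\solution^0,\adsolution^0)$ explicitly. I would take the formula \eqref{eq:partial_time}, set $t=0$ and $\adsolution=\adsolution^0$, and substitute the values supplied by Lemma~\ref{lem:shape_mappings}, namely $\xi(0)=\omega(0)=1$, $A(0)=B(0)=I$, $\xi'(0^+)=\divergence{\vectorfield}$, $\omega'(0^+)=\tdiv{\vectorfield}$, $A'(0^+)=\divergence{\vectorfield}I-2\varepsilon(\vectorfield)$ and $B'(0^+)=\divergence{\vectorfield}I-D\vectorfield\transposed$ (so that $B'(0^+)\transposed=\divergence{\vectorfield}I-D\vectorfield$); in addition $\flow=\text{id}$ at $t=0$, so all compositions with $\flow$ disappear (e.g.\ $\temperature\subwall\circ\flow=\temperature\subwall$, $\velocity\subdes\circ\flow=\velocity\subdes$, $(\grad\temperature\subwall\cdot\vectorfield)\circ\flow=\grad\temperature\subwall\cdot\vectorfield$, $(D\velocity\subdes\,\vectorfield)\circ\flow=D\velocity\subdes\,\vectorfield$), and $\flux(0,\temperature^0)$ reduces to \eqref{eq:flux_temperature}. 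Carrying this out term by term in \eqref{eq:partial_time} reproduces the asserted expression for $d\reducedcostfunction(\Omega)[\vectorfield]$ line by line; since this is pure bookkeeping I would not write out each term in the final proof.

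It then remains to promote the existence of the Eulerian semi-derivative to full shape differentiability in the sense of Definition~\ref{def:shape_derivative}, i.e.\ to show that $\admissibledefo\to\R;\ \vectorfield\mapsto d\reducedcostfunction(\Omega)[\vectorfield]$ is linear and continuous. Linearity is immediate because $\solution^0$ and $\adsolution^0$ depend only on the fixed reference domain $\Omega$ and not on $\vectorfield$, while every term of the derived formula depends linearly on $\vectorfield$, $D\vectorfield$, $\varepsilon(\vectorfield)$ or $\tdiv{\vectorfield}=\divergence{\vectorfield}-(D\vectorfield\,\normal)\cdot\normal$. For continuity I would bound each term by H\"older's inequality, using the energy estimates of Lemmas~\ref{lem:analysis_state_system} and~\ref{lem:analysis_averaged_adjoint} for $\solution^0$ and $\adsolution^0$, the trace theorem together with $\temperature\subwall\in H^2(\R^d)$ and $\velocity\subdes\in H^1(\R^d)^d$ for the boundary and subdomain contributions, and the compact support of $\vectorfield$, to obtain $\abs{d\reducedcostfunction(\Omega)[\vectorfield]}\leq C(\Omega)\big(\norm{\vectorfield}{L^\infty(\R^d)^d}+\norm{D\vectorfield}{L^\infty(\R^d)^{d\times d}}\big)$, whence continuity on $\admissibledefo$ with respect to the $C^\infty_0$-topology. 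I do not expect a genuine obstacle in this theorem itself: the real difficulty of the argument lies in the preparatory results already established — the well-posedness of the state and averaged adjoint systems (Lemmas~\ref{lem:analysis_state_system} and~\ref{lem:analysis_averaged_adjoint}) and the convergence $\solution^t\to\solution^0$ needed for Assumption~\ref{ass:H3} (Lemma~\ref{lem:shape_continuity}). The only points requiring care at this stage are the term-by-term substitution and the continuity estimate, the latter being where the $H^2$-regularity of $\temperature\subwall$ is genuinely used.
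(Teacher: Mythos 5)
Your proposal is correct and follows essentially the same route as the paper's proof: invoke Theorem~\ref{thm:sturm} (whose hypotheses were verified in Section~\ref{sec:verification}) together with \eqref{eq:relation}, evaluate \eqref{eq:partial_time} at $t=0$ using Lemmas~\ref{lem:shape_mappings} and~\ref{lem:shape_composition}, and conclude by noting linearity and continuity of $\vectorfield \mapsto d\reducedcostfunction(\Omega)[\vectorfield]$. You in fact supply more detail than the paper on the final continuity estimate, which the paper merely asserts.
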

\begin{proof}
	We apply Theorem~\ref{thm:sturm}, whose assumptions have been verified throughout this section, to the shape Lagrangian $\shapelagrangian$ defined in Section~\ref{sec:relation_to_abstract}. In combination with \eqref{eq:relation} we can then calculate the shape derivative in the following way
	\begin{equation*}
		d\reducedcostfunction(\Omega)[\vectorfield] = \lim\limits_{t\searrow 0} \frac{\reducedcostfunction(\Omega_t) - \reducedcostfunction(\Omega)}{t} = \left. \frac{d}{d t} \reducedcostfunction(\Omega_t) \right\rvert_{t=0^+} = \left. \frac{d}{d t} \shapelagrangian(t, \solution^t, \adsolution) \right\rvert_{t=0^+} = \partial_t \shapelagrangian(t, \solution^0, \adsolution^0).
	\end{equation*}
	Due to Lemmas~\ref{lem:shape_mappings} and~\ref{lem:shape_composition} as well as the formula \eqref{eq:partial_time} it is easy to see that we indeed have the claimed expression for the shape derivative. Furthermore, $d\reducedcostfunction(\Omega)[\vectorfield]$ is linear in $\vectorfield$ and continuous in the $C^\infty_0(\R^d; \R^d)$ topology. Therefore, it is in fact the shape derivative of the reduced cost functional.
\end{proof}

This completes the theoretical analysis of the shape optimization problem from Section~\ref{sec:problem_formulation}. 
\begin{Remark*}
	Our results are easily transferred to the other flow models which we presented in \cite{blauth}. This is due to the fact that these models differ from the one here only in additional linear terms and use state spaces adapted to each model. However, the former only enhance the coercivity of the corresponding bilinear forms and the latter are structurally very similar to $\statespace(\Omega)$. Therefore, the proofs can be carried out analogously, and we obtain indeed the shape differentiability of all models considered \cite{blauth}. We remark that this shape differentiability result only leads to a local improvement of the geometry, and the investigation of the existence of an optimal shape for problem \eqref{eq:opt_problem} is beyond the scope of this paper.
	
	Note, that we consider only the volume formulation of the shape derivative, as given in Theorem~\ref{thm:shape_derivative}, due to the following. To derive a boundary or Hadamard formulation of the shape derivative in the spirit of the structure theorem (cf. \cite[Chapter 9, Theorem 3.6]{delfour_zolesio}), we would need additional smoothness assumptions. The boundary formulation only exists for domains $\Omega$ whose boundary is of class $C^k$ with $k\geq 1$ (cf. \cite[Chapter 9, Theorem 3.6 and Corollary 1]{delfour_zolesio}), whereas we only consider Lipschitz domains in this paper, as motivated by the application. In particular, the geometries depicted in Figures~\ref{figure:full_geometry} and~\ref{figure:optimized} only have a Lipschitz boundary, and are not sufficiently smooth to satisfy the assumptions of the structure theorem. Moreover, for the numerical solution of shape optimization problems it is discussed in \cite{paganini_comparison} that the volume formulation has superior approximation properties. Hence, for the numerical optimization of the physical application the volume formulation is to be preferred over the boundary formulation. For these reasons, a derivation of the boundary formulation is not feasible within the setting of this paper. 
\end{Remark*}

\section{Conclusion and Outlook}
\label{sec:conclusion}

In this paper, we investigated the theoretical framework of a shape optimization problem for a microchannel cooling system \cite{blauth}. The presented  mathematical model for the  microchannel cooling system consists of a Stokes system coupled to a convection-diffusion equation. Furthermore, we presented a generalization of the shape optimization problem we studied numerically in \cite{blauth}. In particular, we use a cost functional based on the tracking of the energy absorbed by the cooler as well as some desired fluid velocity on a subdomain of the system. After recalling some basic results from shape optimization, we analyzed the state system on a domain transformed by the speed method. In particular, we have shown the well-posedness of the transformed state system for small transformations. Furthermore, we proved that the cost functional of our optimization problem is shape differentiable and calculated its shape derivative by an application of the material derivative free adjoint approach of \cite{sturm}.

For future work we propose to examine the existence of a minimizer of the optimization problem \eqref{eq:opt_problem}, which is a non-trivial task. Additionally, one could calculate a shape Hessian of the cost functional which could then be used in a shape Newton method to solve the optimization problem. 
Finally, a topological sensitivity analysis could be also used to further optimize the geometry of the cooler.

\section*{Acknowledgments}

S. Blauth gratefully acknowledges financial support from the Fraunhofer Institute for Industrial Mathematics (ITWM).

\bibliographystyle{siam}
\bibliography{lit}

\end{document}